\newtheorem{theo}{Theorem}[section]%
\newtheorem{assum}{Assumption}[section]%
\newtheorem{cor}[theo]{Corollary}%
\newtheorem{lemma}[theo]{Lemma}%
\newtheorem{rem}[theo]{Remark}%
\newtheorem{prop}[theo]{Proposition}%
\newtheorem{Ex}[theo]{Example}%
\def\mun{{\hat \mu_N}}
\def\ra{{\rightarrow}}
\newcommand{\E}{\mathbb E}
\newcommand{\Var}{\mathbb{V}\text{ar}}
\newcommand{\Pp}{\mathbb P}
\newcommand{\C}{\mathbb C}
\newcommand{\R}{\mathbb R}
\newcommand{\N}{\mathbb N}
\def\la{{\lambda}}
\begin{document}

\title{Large deviations   for the largest eigenvalue of  Rademacher matrices}
\author{Alice~Guionnet}
\address[Alice Guionnet]{Universit\'e de Lyon, ENSL, CNRS,  France}
\email{Alice.Guionnet@umpa.ens-lyon.fr}

\author{Jonathan~Husson}
\address[Jonathan Husson]{Universit\'e de Lyon, ENSL, CNRS,  France}
\email{Jonathan.Husson@umpa.ens-lyon.fr}
\thanks{This work was supported in part by Labex MILYON}

\maketitle

\begin{abstract}
In this article, we consider random Wigner matrices, that is symmetric matrices such that the subdiagonal entries of $X_n$ are independent, centered, and with variance one except on the diagonal where the entries have variance two. We prove that, under some suitable hypotheses on the laws of the entries, the law of the largest eigenvalue satisfies a large deviation principle with the same rate function as in the Gaussian case. The crucial assumption is that the Laplace transform of the entries must be bounded above by  the Laplace transform of a centered  Gaussian variable with same variance. This is satisfied by  the Rademacher law and the uniform law on $[ - \sqrt{3}, \sqrt{3}]$. We extend our result to complex entries Wigner matrices and Wishart matrices.\end{abstract}

\section{Introduction}

Very few large deviation principles could be  proved so far   in random matrix theory. Indeed, the natural quantities of interest such as the spectrum and the eigenvectors are complicated functions of the entries. Hence,   even if  one considers the simplest model of  Wigner matrices which  are self-adjoint with  independent  identically distributed entries   above the diagonal, the probability that the empirical measure of the eigenvalues or  the largest eigenvalue deviates towards an unlikely value is very difficult to estimate. A well known  case where probabilities of large deviations can be estimated is the case where the entries are Gaussian, centered and well chosen covariances,  the so-called Gaussian ensembles. In this case, the joint law of the eigenvalues has an explicit form, independent of the eigenvectors, displaying  a strong Coulomb gas interaction. This formula could be used to prove a large deviations principle for the empirical measure in \cite{BAG97} and for the largest eigenvalue \cite{BADG01} (see also \cite{Majum} for further discussions of the Wishart case, and \cite{Majum2}). More recently, in a breakthrough paper,  C.Bordenave and P. Caputo \cite{BordCap}  tackled the case of matrices with heavy tails, that is  Wigner matrices with entries with stretched exponential tails, going to zero at infinity more slowly than a Gaussian tail. The driving idea to approach this question is to show that large deviations are in this case created by a few large entries, so that the empirical measure deviates towards the free convolution of the semi-circle law and the limiting spectral measure of the matrix created by these few large entries. This idea could be also used to grasp the large deviations of  the largest eigenvalue by F.Augeri \cite{fanny}.  In the Wishart case, \cite{fey} considered the large deviations for the largest eigenvalue of very thin Wishart matrices $W=GG^*$, in the regime where the matrix $G$ is $L\times M$ with $L$ much smaller than $M$. Hence large deviations for bounded entries, or simply entries with sub-Gaussian tails, remained mysterious in the case of Wigner matrices or Wishart matrices with $L$ of order $M$. In this article we analyze the large deviations of the largest eigenvalue of Wigner matrices with Rademacher or uniformly distributed random variables. More precisely our result holds for any  independent identically distributed entries with distribution with Laplace transform 
 bounded above by  the  Laplace transform  of the Gaussian law with the same variance. 
We then prove a large deviation principle with   the same  rate function  than in the Gaussian case: large deviations are universal in this class of measures. We show that this result generalizes 
to complex entries Wigner matrices as well as to Wishart matrices. We are considering the case of general sub-Gaussian entries in a companion paper with F. Augeri. We show in particular that the rate function is  different from the rate function of the Gaussian case, at least for deviations towards very large values. 

\vspace{0.9 cm}

\subsection{Statement of the results}

We consider a family of independent real random variables $(a_{i,j}^{(1)})_{0 \leq i\leq j \leq N}$, such that   the variables $a_{i,j}^{(1)}$ are distributed according to the  laws $\mu_{i,j}^N$.   We moreover assume that the $\mu^{N}_{i,j}$ are centered :
 $$\mu_{i,j}^{N}(x)=\int x d\mu^{N}_{i,j}(x)=0$$ 
 and with covariance:
$$\mu^N_{i,j}(x^{2})=\int x^{2 } d\mu_{{i,j}}^N(x)=1, \forall 1\le i<j\le N,\qquad \mu^N_{i,i}(x^{2})=2, \quad \forall 1\le i\le N\,.$$
We say that a probability measure $\mu$ has a sharp sub-Gaussian  Laplace transform  iff

\begin{equation}\label{boundL} \forall t \in \R, T_\mu(t) =\int \exp\{tx\} d\mu(x) \leq \exp\big\{\frac{t^2 \mu(x^{2}) }{ 2}\big \}\,. \end{equation}
The terminology ``sharp'' comes from the fact that for $t$ small, we must have
$$T_{\mu}(t)\ge  \exp\{\frac{t^2 \mu(x^{2}) }{ 2} (1+o(t))\}\,.$$

Then we assume that

\begin{assum}[A0]\label{A0} We assume that the $\mu_{i,j}^{N}$ satisfy a sharp Gaussian Laplace transform in the sense that
\begin{itemize} 
\item $(\mu_{i,j}^N)_{i\le j}$ have a sharp sub-Gaussian Laplace transform,
\item  The $\mu_{i,j}^{N}$ have  a uniform lower bounded Laplace transform: For any $\delta>0$ there exists $\varepsilon(\delta)>0$ such that for any $|t|\le\varepsilon(\delta)$, any $1\le i\le j\le N$,  any $N\in \N$,
$$T_{\mu_{i,j}^N}(t)\ge  \exp\{\frac{(1-\delta) t^2 \mu_{i,j}^N(x^{2}) }{ 2} \}\,.$$
\end{itemize}
Moreover, we assume that the $T_{\mu_{i,j}^N}$ are uniformly $C^3$ in a neighborhood of the origin: for $\epsilon>0$ small enough $\sup_{|t|\le \epsilon}\sup_{i,j,N} |\partial_t^3\ln T_{\mu_{i,j}^N}(t)|$ is finite.
 \end{assum}
 Observe that the $\mu_{i,j}^{N}$ have  a uniform lower bounded Laplace transform as soon as they do not depend on $N$ and there are finitely many different of them.
\begin{rem}
We could assume a weaker  upper bound on the Laplace transform for the diagonal entries such as the existence of $A$ finite such that
$$\int e^{tx }d\mu_{i,i}^N(x)\le \exp\{t^{2}+A|t|\},\quad \forall 1\le i\le N,$$
see the proof of Theorem \ref{rftheo}.
\end{rem}

\begin{Ex}\label{ex0}
\begin{enumerate}
\item Clearly a  centered Gaussian variable has a sharp sub-Gaussian Laplace transform. 
\item The Rademacher law  $B=\frac{1}{2} (\delta_{-1}+\delta_{1})$ satisfies a sharp sub-Gaussian Laplace transform since   for all real number $t$
$$T_B(t)=\cosh(t)\le e^{t^{2}/2}\,.$$
\item   $U$, the uniform law on the interval $[- \sqrt{{3}}, \sqrt{{3}} ]$, satisfies a sharp sub-Gaussian Laplace transform since  we have
$$\int x^{2 }dU(x)=1\,,$$
and
$$T_{U}(t)=\frac{1}{t\sqrt{{3}}} \sinh (t{\sqrt{{3}} })=\sum_{n\ge 0}\frac{t ^{2n} {3}^{n}}{(2n+1)!}\,.$$
Since for all $n\ge 0$, $ \frac{ {3}^{n}}{(2n+1)!}\le \frac{1}{2^{n} n!}$, it follows that $T_{U}(t)\le e^{\frac{t^{2}}{2}}$.

\item More generally  if  $\mu$ is a symmetric measure on $\R$    (i.e. such as $ \mu( - A) = \mu(A)$ for any Borel subset $A$ of $\R$) such that
$$\int x^{2} d\mu(x)=1,\quad \int x^{2n} d\mu(x)\le \frac{(2n)(2n-1)\cdots (n+1)}{2^{n}}\quad \forall n\ge 2$$
then $\mu$  satisfies a sharp sub-Gaussian Laplace transform.

\item If $X,Y$ are two independent variables with distribution $\mu$ and $\mu'$, two probability measures  which have a sharp sub-Gaussian Laplace transform, for any $a\in [0,1]$, the distribution of $\sqrt{a}X+\sqrt{1-a} Y$ has a sharp sub-Gaussian Laplace transform.
\item If $\mu_{i,j}^{N}=\mu$ for all $i,j$, then they satisfy a uniform lower bound on the Laplace transform. Also, if all the $\mu_{,j}^N$ are symmetric, the lower bound is automatically satisfied as the Laplace transform is lower bounded by $e^{\frac{1}{2}t^2}$.
\end{enumerate}
\end{Ex}
Note that many measures do not have a sharp sub-Gaussian Laplace transform, e.g. the sparse Gaussian law obtained by multiplying a Gaussian variable by a Bernoulli variable, or the well chosen sum of  Rademacher laws.
We will also  need that the empirical measure of the eigenvalues concentrates in a stronger scale than $N$, see Lemma \ref{convmun}. To this end we will also make the following classical assumptions to use standard concentration of measure tools. 
\begin{assum}\label{AC}
There exists a compact set $K$ such that the support of all $\mu_{i,j}^N$ is included in $K$ for all $i,j\in\{1,\ldots,N\}$and all integer number $N$, or all $\mu_{i,j}^N$ satisfy a log-Sobolev inequality with the same constant $c$ independent of $N$.
\end{assum}
\begin{rem} All the examples of  Example \ref{ex0} satisfy Assumption \ref{AC}, except possibly for sums of Gaussian variables and bounded entries.
\end{rem}

We then construct for all $N \in \N$, a real Wigner matrix $N \times N$ $X_{N}^{(1)}$ by setting :

$$
X_N^{(1)}(i,j) ={\bigg\lbrace}
\begin{array}{l}
\frac{ a_{i,j}^{(1)}}{\sqrt{N}} \text{ when } i \leq j, \cr
 \frac{ a_{j,i}^{(1)}}{\sqrt{N}} \text{ when } i > j\,.\cr
\end{array}$$
We denote $\lambda_{min}(X_{N}^{(1)})=\lambda_1\le \lambda_2 \cdots\le \lambda_N=\lambda_{\rm max}(X_{N}^{(1)})$ the eigenvalues of $X_N^{(1)}$.
It is well known \cite{Wig58} that under our  hypotheses the empirical distribution of the eigenvalues $\hat\mu_{X_{N}^{(1)}}^{N}=\frac{1}{N}\sum_{i=1}^N \delta_{\lambda_i}$ converges weakly towards the semi-circle distribution $\sigma$:
for all bounded continuous function $f$
$$\lim_{N\ra\infty}\int f(x) d\hat\mu_{X_N^{(1)}}^N(x)=\int f(x)d\sigma(x)=\frac{1}{2\pi}\int_{-2}^2 f(x) \sqrt{4-x^2} dx \qquad a.s. $$
It is also well known that the eigenvalues stick to the bulk since we assumed the entries have  sub-Gaussian moments \cite{FuKo,AGZ} :
$$\lim_{N\ra\infty}\lambda_{min}(X_{N}^{(1)})=-2\qquad \lim_{N\ra\infty}\lambda_{\rm max}(X_{N}^{(1)})=2,\qquad a.s$$
Our main result is a large deviation principle from this convergence. 
\begin{theo} \label{maintheow}  Suppose 
 Assumptions \ref{A0} and \ref{AC} hold. Then, 
the law of the largest eigenvalue  $\lambda_{\rm max}(X_N^{(1)})$ of $X^{(1)}_N$ satisfies a large deviation principle with speed $N$ and good rate function  $I^{(1)}$ which is infinite on $(-\infty,2)$ and otherwise given by 

\[ I^{(1)}( \rho)= \frac{1}{2}\int_2^{\rho} {\sqrt{x^2 - 4}}dx \,.\]
In other words, for any closed subset $F$ of $\mathbb R$,
$$\limsup_{N\rightarrow \infty }\frac{1}{N}\ln P\left(\lambda_{\rm max}(X_N^{(1)})\in F\right)\le -\inf_{F}I^{(1)}\,,$$
whereas for any open subset $O$ of $\mathbb R$
$$\liminf_{N\rightarrow \infty }\frac{1}{N}\ln P\left(\lambda_{\rm max}(X_N^{(1)})\in O\right)\ge -\inf_{O}I^{(1)}\,.$$
The same result holds for the opposite of the smallest eigenvalue $-\lambda_{min}(X_{N}^{(1)})$.

\end{theo}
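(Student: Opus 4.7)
The plan is to reduce the LDP for $\lambda_{\max}(X_N^{(1)})$ to the asymptotic analysis of the spherical integral
$$I_N(X_N^{(1)}, \theta) = \int_{S^{N-1}} \exp\{N\theta\,\langle v, X_N^{(1)} v\rangle\}\,d\omega_N(v),\qquad \theta \ge 0,$$
where $\omega_N$ is the uniform probability measure on $S^{N-1}$. Under Assumption \ref{AC} the empirical measure $\hat\mu_{X_N^{(1)}}^N$ concentrates exponentially fast at $\sigma$ (Lemma \ref{convmun}), and the theorem of Guionnet--Ma\"ida then gives
$\frac{1}{N}\ln I_N(X_N^{(1)},\theta)\to J_\sigma(\theta,\lambda_{\max}(X_N^{(1)}))$ in probability, with $J_\sigma(\theta,\cdot)$ continuous and strictly increasing on $[2,\infty)$ in the regime $\theta > 1/2$, and $\sup_{\theta > 0}\bigl(J_\sigma(\theta,\rho) - \theta^2\bigr)=I^{(1)}(\rho)$ for $\rho\ge 2$. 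Both halves of the LDP are therefore reduced to sharp control of the normalizing constant $Z_N(\theta):=\E[I_N(X_N^{(1)},\theta)]$.

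\emph{Upper bound.} Fubini and the independence of the entries turn $\E[\exp\{N\theta\langle v, X_N^{(1)}v\rangle\}]$ into a product of Laplace transforms. Writing $\langle v, X_N^{(1)}v\rangle = \sum_i v_i^2 a_{i,i}^{(1)}/\sqrt{N} + 2\sum_{i<j}v_iv_j a_{i,j}^{(1)}/\sqrt{N}$ and applying the sharp sub-Gaussian bound \eqref{boundL} to each factor, the variance weights $\mu_{i,i}^N(x^2)=2$ and $\mu_{i,j}^N(x^2)=1$ combine to give, for \emph{every} unit vector $v$,
$$\E\bigl[e^{N\theta\langle v, X_N^{(1)}v\rangle}\bigr]\le\exp\Bigl\{N\theta^2\Bigl(\sum_i v_i^4+2\sum_{i<j}v_i^2v_j^2\Bigr)\Bigr\}=e^{N\theta^2},$$
since $\sum_i v_i^4+2\sum_{i<j}v_i^2v_j^2=(\sum_i v_i^2)^2=1$. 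Integrating against $\omega_N$ gives $Z_N(\theta)\le e^{N\theta^2}$, exactly the GOE value. Combined with Markov's inequality and the Guionnet--Ma\"ida convergence, this yields for any $\rho>2$ and $\theta>1/2$
$$\Pp\bigl(\lambda_{\max}(X_N^{(1)})\ge\rho\bigr)\le e^{-N(J_\sigma(\theta,\rho)-\theta^2)+o(N)},$$
and optimizing over $\theta$ gives the upper bound.

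\emph{Lower bound.} Fix $\rho>2$ and let $\theta^*=\theta^*(\rho)>1/2$ be the optimizer. For $v$ distributed as $\omega_N$, the per-entry tilt parameters $\sqrt{N}\theta^*v_i^2$ and $2\sqrt{N}\theta^*v_iv_j$ are of order $1/\sqrt{N}$, so the uniform lower bound and $C^3$ hypothesis of Assumption \ref{A0} give the expansion $\ln T_{\mu^N_{i,j}}(t)=t^2\mu^N_{i,j}(x^2)/2+O(|t|^3)$ with an error summable over entries, leading to the matching lower estimate $\frac{1}{N}\ln Z_N(\theta^*)\to\theta^{*2}$. Now tilt $\Pp$ via $d\tilde\Pp_{\theta^*}=Y_N\,d\Pp$ with $Y_N=I_N(X_N^{(1)},\theta^*)/Z_N(\theta^*)$. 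Under $\tilde\Pp_{\theta^*}$, the joint law of $(v,X_N^{(1)})$ shifts the entries of $X_N^{(1)}$ by $\approx 2\theta^*v_iv_j/\sqrt{N}$, so that $X_N^{(1)}$ is a Wigner matrix perturbed by a rank-one mean of operator norm $2\theta^*$; a BBP-type argument pins $\lambda_{\max}(X_N^{(1)})\to 2\theta^*+(2\theta^*)^{-1}=\rho$ under $\tilde\Pp_{\theta^*}$. The Jensen change-of-measure $\Pp(A)\ge\tilde\Pp_{\theta^*}(A)\exp\{-\tilde\Pp_{\theta^*}(A)^{-1}\,\tilde{\mathbb E}_{\theta^*}[\mathbf{1}_A\log Y_N]\}$ applied to $A=\{|\lambda_{\max}-\rho|<\varepsilon\}$ then gives
$$\Pp\bigl(|\lambda_{\max}(X_N^{(1)})-\rho|<\varepsilon\bigr)\ge e^{-N(J_\sigma(\theta^*,\rho)-\theta^{*2})-o(N)}=e^{-NI^{(1)}(\rho)-o(N)}.$$

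The lower bound is the delicate part, and splits into two genuine technical obstacles. The first is the matching asymptotic $Z_N(\theta^*)\ge e^{N\theta^{*2}-o(N)}$, which is precisely why Assumption \ref{A0} demands more than sub-Gaussianity: a sharp quadratic lower bound for $|t|$ small and uniform $C^3$ control near the origin, so that the sphere-averaged tilt does not lose exponential mass. The second and harder obstacle is the concentration of $\lambda_{\max}$ under $\tilde\Pp_{\theta^*}$: because the tilt averages over $v\in S^{N-1}$ rather than fixing a direction, one must localize $v$ on a good subset where the induced rank-one perturbation is faithful, exclude atypical directions using the concentration-of-measure tools of Assumption \ref{AC}, and then run BBP for a \emph{random} rank-one deformation of a \emph{non-Gaussian} Wigner matrix to pin $\lambda_{\max}$ at $\rho=2\theta^*+(2\theta^*)^{-1}$. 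This step, combining spherical concentration with the BBP transition under the tilted law, is the technical heart of the argument.
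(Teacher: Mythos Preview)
Your proposal is correct and follows essentially the same approach as the paper: tilt by the spherical integral $I_N(X_N^{(1)},\theta)$, use the sharp sub-Gaussian bound to get $Z_N(\theta)\le e^{N\theta^2}$ exactly, use delocalization of the sphere vector together with the uniform quadratic lower bound on $\ln T_{\mu_{i,j}^N}$ near the origin to get the matching $Z_N(\theta)\ge e^{N\theta^2(1-o(1))}$, and for the LDP lower bound show that under the $e$-conditional tilted law the matrix is a rank-one perturbation of a Wigner matrix so that BBP pins $\lambda_{\max}$ at $2\theta+(2\theta)^{-1}$. The only cosmetic difference is that the paper does not invoke a Jensen change-of-measure inequality for the lower bound but instead uses directly $\Pp(\mathcal A)\ge \E[\mathbf 1_{\mathcal A}I_N]\cdot(\sup_{\mathcal A}I_N)^{-1}$ together with the uniform continuity of $J_N(\cdot,\theta)$ on the good set $\mathcal A_{x,\delta}^M=\{|\lambda_{\max}-x|<\delta\}\cap\{d(\hat\mu,\sigma)<N^{-\kappa'}\}\cap\{\|X\|\le M\}$; both routes yield the same estimate once one restricts to this set, which your sketch implicitly does when appealing to the Guionnet--Ma\"ida convergence.
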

Therefore, the large deviations principles  are the same as in the case of Gaussian entries as soon as the entries have a sharp sub-Gaussian Laplace transforms and are bounded, for instance for Rademacher variables or uniformly distributed variables. Hereafter we show how this result generalizes to other settings. 
First, this result extends to the case of Wigner matrices with complex entries as follows.  We now consider 
a family of independent random variables $(a^{(2)}_{i,j})_{1 \leq i\leq j \leq N}$, such that  the variables $a^{(2)}_{i,j}$ are distributed according to a law $\mu_{i,j}^N$ when $i \le j$, which are centered  probability measures on $\C$ (and on $\R$ if $i=j$).  We write $a^{(2)}_{i,j} = x_{i,j} + i y_{i,j}$ where $x_{i,j} = \Re(a^{(2)}_{i,j})$ and $y_{i,j} = \Im ( a^{(2)}_{i,j} )$. We suppose that for all $i \in [1, N]$, $y_{i,i} = 0$. In this context, for a probability measure on $\C$, we will consider its Laplace transform to be the function 
$$T_\mu (z):=  \int  \exp \{ \Re (a \bar{z})\} d\mu(a)\,.$$
 We  assume that 
\begin{assum}[A0c]\label{A0c} For all $i<j$
\[ \forall t \in \C , T_{\mu^{N}_{i,j}} (t) \leq \exp(|t|^2 / 4 ) \]
and  for all $i$
\[ \forall t \in \R, T_{\mu^{N}_{i,i}} (t) \leq \exp(t^2/2)\,. \]

We assume that for all $\delta>0$ there exists $\varepsilon(\delta)>0$ so that for all complex number $t$ with modulus bounded by $\varepsilon(\delta)$
$$T_{\mu_{i,j}^N} (t) \geq \exp\{\frac{|t|_2^2 }{4 }(1-\delta)\}, i<j,\qquad T_{\mu_{i,i}^N}(t)\ge  \exp \{\frac{(1-\delta) t^2  }{2} \}\,. $$
Moreover, for $\epsilon>0$ small enough $\sup_{|t|\le \epsilon}\sup_{i,j,N}| \partial_t^3\ln T_{\mu_{i,j}^N}(t)|$ is finite.
\end{assum}
Observe that the above hypothesis implies that for all $i<j$,  $2 \E[ x_{i,j}^2] = 2 \E[ y_{i,j}^2] =\E[ x_{i,i}^2]  = 1$ and $\E[ x_{i,j} y_{i,j}] = 0$. 
Examples of distributions satisfying Assumption \ref{A0c} are given by taking $(x_{i,j},y_{i,j})$ centered  independent variables with law satisfying  a sharp sub-Gaussian Laplace transform. Hereafter, we extend naturally Assumption \ref{AC} by assuming that the compact $K$ is a compact subset of $\mathbb C$ or log-Sobolev inequality holds in the complex setting.

We then construct for all $N \in \N$, $X^{(2)}_N$ a complex Wigner matrix $N \times N$ by letting :

$$
X_N^{(2)}(i,j) =\bigg\lbrace
\begin{array}{l}
\frac{ a^{(2)}_{i,j}}{\sqrt{N}} \text{ when } i \leq j \cr
\cr
 \frac{ \overline{ a^{(2)}_{j,i}}}{\sqrt{N}} \text{ when } i > j\cr
\end{array}$$
Again, it is well known that the spectral measure of $X_N^{(2)}$ converges towards the semi-circle distribution $\sigma$ and that the eigenvalues stick to the bulk \cite{AGZ}.  

\begin{theo} \label{maintheow2} Assume that Assumptions \ref{A0c} and \ref{AC} hold. Then, 
the law of the largest eigenvalue  $\lambda_{\rm max}(X_N^{(2)})$ of $X_N^{(2)}$ satisfies a large deviation principle with speed $N$ and good rate function  $I^{(2)}$ which is infinite on $(-\infty,2)$ and otherwise given by 
\[ I^{(2)}(\rho)=2I^{(1)}( \rho)=  \int_2^{\rho} \sqrt{x^2 - 4} dx \,.\]
\end{theo}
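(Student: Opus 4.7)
The plan is to follow the proof of Theorem \ref{maintheow} almost verbatim, replacing the real Haar integral over $S^{N-1}_\R$ by the complex one over the unit sphere $S_\C^{N-1}\subset\C^N$. The Gaussian (GUE) rate function is known to equal twice the GOE one, reflecting the Dyson index $\beta=2$; the task is to verify that the sharp sub-Gaussian Laplace bound of Assumption \ref{A0c}, with the factor $1/4$ for off-diagonal entries and $1/2$ for the diagonal, is exactly what is needed to extend the universality to this symmetry class.

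For the upper bound, I would introduce the annealed complex spherical integral
\[
F_N(\theta):=\frac{1}{N}\ln \E\bigg[\int_{S_\C^{N-1}}\exp\{N\theta\langle v,X_N^{(2)}v\rangle\}\,dv\bigg].
\]
By Fubini and independence of the entries, together with the convention $T_\mu(z)=\int e^{\Re(a\bar z)}d\mu(a)$, the inner expectation factorises as
\[
\prod_i T_{\mu_{i,i}^N}(\sqrt N\theta|v_i|^2)\;\prod_{i<j}T_{\mu_{i,j}^N}(2\sqrt N\theta\,v_i\bar v_j).
\]
Bounding each diagonal factor by $\exp(N\theta^2|v_i|^4/2)$ and each off-diagonal factor by $\exp(N\theta^2|v_i|^2|v_j|^2)$ via Assumption \ref{A0c}, and using $\sum_i|v_i|^2=1$, one obtains
\[
\E\big[e^{N\theta\langle v,X_N^{(2)}v\rangle}\big]\leq \exp(N\theta^2/2),
\]
which matches the value obtained in the GUE case. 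The sharp lower-bound part of Assumption \ref{A0c} shows this bound is saturated up to $o(N)$ along appropriate choices of $v$, so $F_N(\theta)$ is asymptotically identical to its GUE counterpart.

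Coupling this annealed computation with the deterministic asymptotic of the $\beta=2$ spherical integral,
\[
\frac{1}{N}\ln\int_{S_\C^{N-1}}e^{N\theta\langle v,Mv\rangle}dv\;\longrightarrow\;J^{(2)}(\theta,\lambda_{\max}(M))
\]
whenever $\hat\mu_M\to\sigma$ and $\lambda_{\max}(M)\to\rho$, followed by the Chebyshev–Legendre inversion used for the real case (with Lemma \ref{convmun} applied in its complex formulation to keep the spectral measure concentrated at a scale stronger than $N$), yields the large deviation upper bound with rate $I^{(2)}(\rho)=\int_2^\rho\sqrt{x^2-4}\,dx$. The lower bound is obtained by tilting exponentially through a rank-one Hermitian perturbation that forces a target outlier near $\rho$; the sharp lower bound in Assumption \ref{A0c} ensures the resulting entropy cost is at most $I^{(2)}(\rho)$. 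The main obstacle is checking that the $1/4$ factor in the complex Laplace bound aligns precisely with the $\beta=2$ scaling of the spherical integral, so that the doubling $I^{(2)}=2I^{(1)}$ comes out of exactly the same manipulations used for the real Wigner matrices.
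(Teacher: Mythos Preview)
Your proposal is correct and follows essentially the same route as the paper: the complex case is handled in parallel with the real one throughout, with the annealed spherical integral satisfying $\lim_N F_N(\theta,2)=\theta^2/2$ (Lemma \ref{rfwig}), the Guionnet--Maida asymptotics for $J$ with $\beta=2$, the concentration of the spectral measure (Lemma \ref{convmun}), and the rank-one BBP analysis under the tilted law $\Pp_N^{(e,\theta)}$ on delocalised vectors $e\in V_N^\epsilon$ (Lemma \ref{dif}). Your computation of the key bound $\E[e^{N\theta\langle v,X_N^{(2)}v\rangle}]\le e^{N\theta^2/2}$ and your observation that the $1/4$ in Assumption \ref{A0c} matches the $\beta=2$ scaling are exactly the content of the paper's unified treatment.
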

We finally generalize our result to the case of Wishart matrices. We let $L,M$ be two integers with $N=L+M$. Let $G_{L,M}^{(\beta)}$ be an $L\times M$ matrix with independent entries $(a_{i,j}^{(\beta)})_{1\le i\le L\atop 1\le j\le M}$ with  laws $\mu^{L,M}_{i,j}$   on the real line if $\beta=1$ and on the complex plane if $\beta=2$. The $\mu^{L,M}_{i,j}$ satisfy a sharp sub-Gaussian Laplace transform (with real  or complex values) for all $i,j\in [1,L]\times [1,M]$,  and its complementary uniform lower bound (Assumption \ref{A0}, or Assumption \ref{A0c}), are centered and  have covariance one. We set $W_{L,M}^{(\beta)}=\frac{1}{L}G_{L,M}^{(\beta)}(G_{L,M}^{(\beta)})^{*}$.  When $M/L$ converges towards $\alpha$, the spectral distribution of $W_{M,L}^{(\beta)}$ converges towards the Pastur-Marchenko law \cite{pastur-marchenko}: for any bounded continuous function $f$
$$\lim_{N\ra\infty} \int f(x)d\hat\mu^{L}_{W_{L,M}^{(\beta)}}(x)=\int f(x) d\pi_\alpha(x)\qquad a.s$$
where if $\alpha\ge 1$ and $a_\alpha= (1-\sqrt{\alpha})^2, b_\alpha= (1+\sqrt{\alpha})^2$,
$$\pi_{\alpha}(dx)=\frac{\sqrt{(b_\alpha-x)(x-a_{\alpha})}}{2\pi  x} \mathds{1}_{[a_\alpha,b_\alpha]} dx\,.$$
When $\alpha<1$, the limiting spectral measure has  aditionnally a Dirac mass at the origin with mass $1-\alpha$. We hereafter concentrates on the case $M\ge L$ up to replace $W_{L,M}^{(\beta)}$ by $(G_{L,M}^{(\beta)})^* G_{L,M}^{(\beta)}/ M$. Again, the extreme eigenvalues were shown to stick to the bulk \cite{BZ}. 
We prove a large deviation principle from this convergence:

\begin{theo} \label{maintheow} Assume that the $\mu_{i,j}^{N}$ satisfy Assumption \ref{AC}.
 Assume  they satisfy a sharp Gaussian Laplace transform \ref{A0} when $\beta=1$ or \ref{A0c} when $\beta=2$, and a uniform lower bounded Laplace transform \ref{A0} when $\beta=1$ or \ref{A0c} when $\beta=2$. Assume that there exists  $\alpha\ge 1$ and $\kappa>0$  so that $\frac{M}{L}-\alpha=o(N^{-\kappa})$. Then, 
the law of the largest eigenvalue  $\lambda_{\rm max}(W_{L,M}^{(\beta)})$ of $W_{L,M}^{(\beta)}$ satisfies a large deviation principle with speed $N$ and good rate function  $J^{({\beta})}$ which is infinite on $(-\infty,b_\alpha)$ and otherwise given by 

\[ J^{({\beta})}( x)= \frac{\beta}{4(1+\alpha)}\int_{b_{\alpha}}^{x} \frac{\sqrt{(y-b_\alpha)(y-a_{\alpha})}}{ y} dy \,.\]
where $\beta=1$ in the case of real entries, and $\beta=2$ in the case of complex entries.

\end{theo}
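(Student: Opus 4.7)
The plan is to mimic the strategy of the Wigner cases above, replacing the spherical integral over $\Ss^{N-1}$ by a bi-spherical integral over $\Ss^{L-1}\times\Ss^{M-1}$. The linear form $u^*Gv$ in the entries of $G$ lets the sharp sub-Gaussian Laplace bound carry over almost verbatim; the main work lies in proving the rectangular analogue of Guionnet--Maida for bi-spherical integrals (whose rate function is the Marchenko--Pastur version of the one used in the Wigner case) and in implementing a rank-one change of measure for the matching lower bound.

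\medskip

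For the upper bound, set, with $c_1=1$ and $c_2=2$,
\[
K_{L,M}^{(\beta)}(\theta,G) \;=\; \int_{\Ss^{L-1}}\!\int_{\Ss^{M-1}} \exp\!\left(c_\beta\sqrt{L\theta}\,\Re(u^*Gv)\right) du\,dv,
\]
with the uniform probability measures on the real or complex spheres. Since $u^*Gv=\sum_{i,j}\bar u_iv_jG_{ij}$ and $\sum_{i,j}|u_i|^2|v_j|^2=1$, Assumption~\ref{A0} or~\ref{A0c} applied entry by entry yields the Gaussian-type moment bound $\E[K_{L,M}^{(\beta)}(\theta,G_{L,M}^{(\beta)})]\le\exp(\beta L\theta/2)$. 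A rectangular extension of Guionnet--Maida (proved either by a direct saddle-point argument, or by Hermitizing $G$ into an $(L+M)\times(L+M)$ symmetric matrix whose off-diagonal block is $G/\sqrt L$ and whose bulk spectrum converges to the symmetrized Marchenko--Pastur law) then shows that, on events where $\hat\mu^L_{W_{L,M}^{(\beta)}}\to\pi_\alpha$ weakly and $\lambda_{\max}(W_{L,M}^{(\beta)})\to\rho\ge b_\alpha$,
\[
\frac{1}{L}\ln K_{L,M}^{(\beta)}(\theta,G) \;\longrightarrow\; \Psi^{(\beta)}_\alpha(\theta,\rho),
\]
for an explicit convex function $\Psi^{(\beta)}_\alpha(\theta,\cdot)$. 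The concentration of $\hat\mu^L_W$ and the sticking of the bulk to $[a_\alpha,b_\alpha]$ (both from Assumption~\ref{AC}, as in Lemma~\ref{convmun}) upgrade this to an exponential-tightness bound, and Markov's inequality then gives
\[
\limsup_{L\to\infty}\frac{1}{L}\ln\Pp\!\left(\lambda_{\max}(W_{L,M}^{(\beta)})\ge\rho\right) \;\le\; -\sup_{\theta>0}\bigl(\Psi^{(\beta)}_\alpha(\theta,\rho)-\beta\theta/2\bigr),
\]
so that identifying the Legendre transform on the right with $(1+\alpha)J^{(\beta)}(\rho)$ (as in the Gaussian Wishart LDP of~\cite{BADG01}) produces the upper bound at speed $N=L(1+\alpha)$.

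\medskip

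For the matching lower bound at $\rho>b_\alpha$, let $\theta(\rho)$ be the optimizer above and let $(u_0,v_0)\in\Ss^{L-1}\times\Ss^{M-1}$ be the associated pair of unit vectors. Tilt each entry by
\[
d\tilde\mu^{L,M}_{i,j}(x) \;=\; \frac{\exp\bigl(\theta(\rho)\,(u_0)_i\overline{(v_0)_j}\,x/\sqrt L\bigr)}{T_{\mu^{L,M}_{i,j}}\!\bigl(\theta(\rho)\,(u_0)_i\overline{(v_0)_j}/\sqrt L\bigr)}\,d\mu^{L,M}_{i,j}(x).
\]
Under the product tilted law, $G$ becomes the sum of a centered matrix (still satisfying the sharp sub-Gaussian bound up to negligible corrections) and a rank-one mean whose magnitude is tuned so that the BBP phase transition places an outlier of $W$ exactly at $\rho$. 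The sharp sub-Gaussian \emph{lower} bound in Assumption~\ref{A0}/\ref{A0c} guarantees that the Radon--Nikodym cost of the tilt equals $NJ^{(\beta)}(\rho)+o(N)$, and a standard Cram\'er-type argument controls the fluctuations of $\lambda_{\max}(W)$ around $\rho$ under the tilted law.

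\medskip

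The main obstacle is the bi-spherical asymptotic: the Guionnet--Maida saddle-point analysis has to be carried out for rectangular $G$ with $o(L)$ errors that hold uniformly on the events where the spectrum of $W$ is close to its limit. The Hermitization carries $|M-L|$ trivial zero eigenvalues and a structured pair of zero diagonal blocks that must be accounted for in the saddle-point, and the quantitative regime assumption $M/L-\alpha=o(N^{-\kappa})$ is exactly what is needed to replace $\alpha_N=M/L$ by $\alpha$ in the limiting formula without incurring an $O(L)$ error. Once this asymptotic is in hand, the remaining steps above are essentially bookkeeping adaptations of the Wigner argument.
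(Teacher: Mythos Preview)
Your route is genuinely different from the paper's. The paper does \emph{not} introduce a bi-spherical integral: it Hermitizes $G_{L,M}^{(\beta)}$ into the $N\times N$ matrix $X_N^{(w_\beta)}$ and then runs the \emph{same} spherical integral $I_N(X,\theta)=\E_e[e^{N\theta\langle e,Xe\rangle}]$ over $\Ss^{N-1}$ as in the Wigner proof, so that Theorem~\ref{myl} and the continuity estimates of Proposition~\ref{1} and Corollary~\ref{cont} apply verbatim. The price is that the free energy is no longer $\theta^2/\beta$: writing $e=(e^{(1)},e^{(2)})$, the vanishing diagonal blocks give $\E_X[e^{N\theta\langle e,X_N^{(w_\beta)}e\rangle}]\le\exp\bigl(\tfrac{2\theta^2}{\beta'}N\|e^{(1)}\|_2^2\|e^{(2)}\|_2^2\bigr)$, and a Laplace principle in the Beta-distributed variable $\|e^{(1)}\|_2^2$ produces the non-trivial $F(\theta,w_i)$ of Lemma~\ref{b66} together with an optimal splitting $x_{\theta,\alpha}$ that re-enters both the rate-function identification (Section~\ref{grf}) and the lower bound (Section~5.2). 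Your bi-spherical integral trades this for a simpler free energy (your bound on $\E[K]$ is indeed tight on delocalized $(u,v)$) but pushes the difficulty into the asymptotic $\Psi_\alpha^{(\beta)}$.

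That asymptotic is a genuine gap. You say the rectangular Guionnet--Maida follows ``by Hermitizing $G$'', but the spherical integral on the Hermitized matrix is \emph{not} your bi-spherical integral: the two differ exactly by the integration over $\|e^{(1)}\|_2^2$, so deducing one from the other is precisely the Beta-law analysis the paper carries out. A direct saddle-point proof for the bi-spherical integral, together with the uniform continuity analogue of Proposition~\ref{1} that your Markov step needs, is genuinely new work not covered by \cite{GuMa05}; you acknowledge this is ``the main obstacle'', and it is.

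Your lower bound also needs repair. The phrase ``let $(u_0,v_0)$ be the associated pair of unit vectors'' has no content: $\theta(\rho)$ is a scalar and singles out no pair. Presumably you mean any delocalized pair, but then you must check that (i) $|(u_0)_i(v_0)_j|=o(L^{-1/2})$ so that the uniform lower Laplace bound applies and the tilted entries remain a small perturbation of variance-one entries, and (ii) the bi-spherical normalization $\|u_0\|=\|v_0\|=1$ --- which effectively pins the ratio that the paper optimizes to $x_{\theta,\alpha}$ --- still lands on the correct optimum so that the tilt cost equals $NJ^{(\beta)}(\rho)+o(N)$. The paper avoids this by tilting with the random spherical integral itself and proving (Lemma~\ref{b66}, Lemma~\ref{Res}) that under the tilted law $\|e^{(1)}\|_2^2$ concentrates at $x_{\theta,\alpha}$ and the induced rank-one perturbation has the right block structure; the outlier is then located via a $2\times 2$ determinantal equation. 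Your deterministic tilt is not obviously equivalent, and the claim that its Radon--Nikodym cost equals $NJ^{(\beta)}(\rho)+o(N)$ is asserted, not argued.
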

This problem can be seen as a generalization of the previous cases since if we consider the $N\times N$ matrix
\[X^{(w_\beta)}_N = 
\begin{pmatrix}
0 & \frac{1}{\sqrt{N}}G_{L,M}^{(\beta)} \\
\frac{1}{\sqrt{N} } (G^{(\beta)}_{L,M} )^{*}& 0 \\
\end{pmatrix}
\]
 the spectrum of the $N\times N$ matrix $X^{(w_{\beta)}}_N$ is given by $L$ eigenvalues $\sqrt{\frac{L}{N}\lambda}$, $L$ eigenvalues $-\sqrt{\frac{L}{N}\lambda}$, where $\lambda$ are the eigenvalues of $W_{L,M}^{(\beta)}$, and $M-L$ vanishing eigenvalues. Hence, the largest eigenvalue of $W_{L,M}^{(\beta)}$ is the square of the largest eigenvalue of $X_N^{(w_\beta)}$ multiplied by $N/L$. It is therefore equivalent to show a large deviation principle
 for the largest eigenvalue of 
 $X_N^{(w_{\beta})}$ with speed $N$ and  rate function
 $$I^{(w_{\beta})}(x)=J^{({\beta})}({({1+\alpha})}x^{2})\,.$$
 This  amounts to consider a Wigner matrix with some entries set to zero.  We denote $a_{i,j}^{(w_\beta)}$ the entries of $\sqrt{N} X_N^{(w_\beta)}$:
 \begin{eqnarray*}
 a_{i,j}^{(w_\beta)}&=& 0, \qquad \mbox{ if } i,j\le L \mbox{ or } i,j\ge L+1,\\
 a_{i,j}^{(w_\beta)}&=&a_{i-L,j}^{(\beta)},\quad  i\ge L+1, j\le L, \\
 a_{i,j}^{(w_\beta)}&=& \overline a_{j-L, i}^{(\beta)},\quad  j\ge L+1, i\le N.\end{eqnarray*}
Again, we denote by $\mu^{N}_{i,j}$ the law of the $i,j$th entry of this matrix. Hereafter, we denote by $\sigma_{w}$ the limiting spectral distribution of $X_{N}^{(w_\beta)}$ given for any test function $f$ by
$$\int f(x) d\sigma_{w}(x)=\frac{1}{1+\alpha}\left(\int f(\sqrt{\frac{x}{1+\alpha}})d\pi_{\alpha}(x)+\int f(-\sqrt{\frac{x}{1+\alpha}})d\pi_{\alpha}(x)\right)+\frac{\alpha-1}{\alpha+1} f(0)\,.$$
Therefore, we shall prove Theorem \ref{maintheow} by showing that
\begin{theo} \label{maintheob} Assume that the $\mu_{i,j}^{N}$ satisfy Assumption \ref{AC}.
 Assume  they satisfy a sharp Gaussian Laplace transform \ref{A0} when $\beta=1$ or \ref{A0c} when $\beta=2$, and a uniform lower bounded Laplace transform \ref{A0} when $\beta=1$ or \ref{A0c} when $\beta=2$. Assume that there exists  $\alpha\ge 1$ and $\kappa>0$  so that $\frac{M}{L}-\alpha=o(N^{-\kappa})$. Then, 
the law of the largest eigenvalue  $\lambda_{\rm max}(X_{N}^{(w_\beta)})$ of $X_{N}^{(w_\beta)}$ satisfies a large deviation principle with speed $N$ and good rate function  $I^{(w_{\beta})}$ which is infinite on $(-\infty, \tilde b_{\alpha})$,
if $\tilde b_{\alpha}= \sqrt{(1+\alpha)^{-1}  b_\alpha}$ and otherwise given by 

$$ I^{(w_{\beta})}( x)=\frac{ \beta}{1+\alpha}\int_{\tilde b_{\alpha}}^{x}\frac{ 1 }{y}\sqrt{ (1+\alpha)^{2}(y^{2}-1)^{2}-4\alpha}dy\,.$$
where $\beta=1$ in the case of real entries, and two in the case of complex entries.

\end{theo}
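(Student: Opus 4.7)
The plan is to import the spherical integral and Gaussian tilting arguments developed for Theorems \ref{maintheow} and \ref{maintheow2}, observing that $X_N^{(w_\beta)}$ is a (real or complex) Wigner-type matrix whose entries satisfy Assumption \ref{A0} or \ref{A0c}: on the diagonal blocks $[1,L]^2 \cup [L+1,N]^2$ the entries are deterministically zero, which vacuously verifies the sharp sub-Gaussian bound \eqref{boundL} with variance zero, while on the off-diagonal blocks they coincide with the entries of $G_{L,M}^{(\beta)}$. The limiting spectral measure is $\sigma_w$ instead of the semicircle, with upper edge $\tilde b_\alpha$; hence the machinery used for the Wigner proof (bulk concentration via Lemma \ref{convmun} and the large-$N$ asymptotics of spherical integrals à la Guionnet-Maïda) carries over with $\sigma$ replaced by $\sigma_w$ throughout.

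For the upper bound, I would fix $\theta \geq 0$ and combine Chebyshev's inequality with the domination of $e^{\beta N \theta \lambda_{\max}(X)}$ by the spherical integral $I_N(X,\theta) = \int e^{\beta N \theta \langle v, X v\rangle} dv$ over the unit sphere of $\R^N$ or $\C^N$, up to a subexponential factor. Exchanging the expectation over the matrix with the integral over $v$ and applying the sharp sub-Gaussian bound entry by entry (trivially on the zero entries and via \eqref{boundL} on the nonzero ones) reduces the annealed bound to the one corresponding to a Gaussian matrix with the same variance profile. The limit of this Gaussian annealed spherical integral is an explicit convex function $J_w(\theta)$, whose Legendre dual in $\theta$ is precisely the rate function $I^{(w_\beta)}$ of the statement; optimizing over $\theta$ closes the upper bound. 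For the matching lower bound, I would tilt the law of the entries by $e^{\beta N \theta^* \langle v^*, X v^*\rangle}/Z$, where $\theta^* = \theta^*(x)$ is the optimizer of the Legendre duality and $v^*$ is a deterministic unit vector aligned with the top eigenvector of a well chosen rank-one perturbation; the uniform $C^3$ smoothness of $\ln T_{\mu_{i,j}^N}$ near the origin (Assumption \ref{A0} or \ref{A0c}) guarantees that the tilted entries are still independent with variances perturbed at first order only, so that Lemma \ref{convmun} still yields convergence of the bulk to $\sigma_w$, while the rank-one drift pushes $\lambda_{\max}$ to $x$ via a Baik-Ben Arous-Péché transition.

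The main technical obstacle is to compute $J_w(\theta)$ and verify that its Legendre transform produces exactly the integrand in $I^{(w_\beta)}$: the atom of mass $(\alpha-1)/(\alpha+1)$ that $\sigma_w$ carries at zero when $\alpha > 1$ complicates the subordination equation defining $J_w$, and it must be checked that this atom nonetheless yields the closed-form expression of the statement. Secondary obstacles are propagating the error from the hypothesis $M/L - \alpha = o(N^{-\kappa})$ to the exponential scale at which the rate function is computed, and verifying that after the change of measure the tilted entries still satisfy the concentration hypotheses of Assumption \ref{AC} needed to apply Lemma \ref{convmun} under the tilted law.
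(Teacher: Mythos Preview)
Your overall architecture—tilt by a spherical integral, compute the annealed free energy, and match with a BBP-type lower bound—is the paper's, but the mechanism you describe for the upper bound has a genuine gap. The claimed domination of $e^{\beta N\theta\lambda_{\max}(X)}$ by $I_N(X,\theta)=\E_e[e^{N\theta\langle e,Xe\rangle}]$ goes the wrong way: since $\langle e,Xe\rangle\le\lambda_{\max}(X)$ for every unit $e$, one always has $I_N(X,\theta)\le e^{N\theta\lambda_{\max}(X)}$, and the deficit is exponential (the spherical mass near the top eigenvector is $e^{-cN}$). Hence $\E[I_N]$ does not control $\E[e^{N\theta\lambda_{\max}}]$ from above, and the rate function is \emph{not} the Legendre transform of the annealed free energy $F(\theta,w_\beta)$. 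The paper's upper bound instead inserts $I_N/I_N$ and exploits that $I_N(X,\theta)$ is a \emph{spectral} functional: on the event $\mathcal A_{x,\delta}^M=\{|\lambda_{\max}-x|<\delta,\ d(\hat\mu,\sigma_w)<N^{-\kappa'},\ \|X\|\le M\}$, the Guionnet--Ma\"ida asymptotics (Theorem~\ref{myl}, Corollary~\ref{cont}) give $I_N(X,\theta)\ge e^{N(J(\sigma_w,\theta,x)-o_\delta(1))}$, whence $\Pp(\mathcal A_{x,\delta}^M)\le\E[I_N]\,e^{-NJ(\sigma_w,\theta,x)+o(N)}$. The correct variational formula is therefore $I^{(w_\beta)}(x)=\sup_{\theta\ge0}\{J(\sigma_w,\theta,x)-F(\theta,w_\beta)\}$ (Proposition~\ref{propgrf}); the identity $\partial_x I^{(w_\beta)}(x)=\theta_x-\frac{\beta}{2}G_{\sigma_w}(x)$ with $\theta_x$ solving $x=\partial_\theta F(\theta,w_\beta)+\frac{\beta}{2\theta}$ (Section~\ref{grf}) shows explicitly that $x\neq\partial_\theta F$, so a straight Cram\'er argument cannot hit the sharp rate.

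Your lower bound has a related problem. Tilting by a single deterministic $v^*$ produces a density $e^{N\theta\langle v^*,Xv^*\rangle}$ which is not a function of the spectrum, so you cannot evaluate it on $\{\lambda_{\max}\approx x\}$ the way the paper evaluates $I_N$ there. The paper keeps the full spherical average (so the tilt remains spectral), proves that the $e$-integral localizes on delocalized vectors $e\in V_N^\epsilon$ at no exponential cost (Proposition~\ref{tyui}), and only then freezes $e$ to run the BBP analysis uniformly over $V_N^\epsilon$ (Lemmas~\ref{cruc} and~\ref{dif}). In the Wishart case this localization also forces $\|e^{(1)}\|_2^2$ to concentrate at the Laplace-optimal $x_{\theta,\alpha}$ (Lemma~\ref{b66}), which is precisely what makes the lower bound match the upper bound; a deterministic $v^*$ would have to guess this value a priori and in any case cannot reproduce the Beta-entropy contribution inside $F(\theta,w_\beta)$.
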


{\bf Acknowledgments:} Alice Guionnet wishes to thank A. Dembo for long discussions about large deviations for the largest eigenvalue for sub-Gaussian matrices in Abu Dhabi in 2011. The idea to  tilt measures by the spherical integral came out magically from a discussion with M. Potters in UCLA in 2017 and we wish to thank him for this beautiful inspiration. We also benifited from many discussions with M. Maida with whom one of the author is working on a companion paper on unitarily invariant ensembles, as well as with Fanny Augeri with whom we are working on a follow up paper tackling the general sub-Gaussian case. Finally, we are very grateful for stimulating discussions with O. Zeitouni and N. Cook. 

\medskip

This work was supported by the LABEX MILYON (ANR-10-LABX-0070) of Universit\'e de Lyon, within the program "Investissements d'Avenir" (ANR-11-IDEX- 0007) operated by the French National Research Agency (ANR).

\subsection{Scheme of the proof}
The idea of the proof is reminiscent of Cramer's approach to large deviations: we appropriately  tilt measures to make the desired deviations likely. The point is to realize that it is enough to shift the measure in a random direction and use estimates on spherical intergrals obtained by one of the author  and M. Maida \cite{GuMa05}. To be more precise,
we shall follow the usual scheme to prove first exponential tightness:
\begin{lemma}\label{exptight} For $\beta=1,2,{w_{1}}, {w_{2}}$, assume that the distribution of the entries $a^{(\beta)}_{i,j}$ satisfy Assumption \ref{AC} for $\beta=1,{w_{1}}$ and Assumption \ref{A0c} for $\beta=2,w_{2}$. Then:

\[ \lim_{K \to + \infty} \limsup_{N \to \infty} \frac{1}{N} \ln \Pp[ \lambda_{\max}(X_N^{(\beta)}) > K] = - \infty \]
Similar results hold for $\lambda_{\min}(X_N^{(\beta)})$.
\end{lemma}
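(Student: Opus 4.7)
The plan is to combine a uniform upper bound on the median $m_N$ of $\lambda_{\max}(X_N^{(\beta)})$ with Gaussian concentration of measure at speed $N$, which together give $\Pp[\lambda_{\max}(X_N^{(\beta)})>K]\le C\exp(-cN(K-m_N)^2)$ for $K$ sufficiently large, from which the lemma follows immediately.

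The key input for concentration is that $\lambda_{\max}$ is convex (as a supremum of linear functionals, by $\lambda_{\max}(A)=\sup_{\|v\|=1}\langle v,Av\rangle$) and $1$-Lipschitz in operator norm. Perturbing the vector $\mathbf a=(a_{i,j})_{i\le j}$ of independent entries by $\delta$ modifies $X_N^{(\beta)}$ by a self-adjoint matrix of Hilbert--Schmidt norm at most $\sqrt{2/N}\|\delta\|_2$ (identifying $\mathbb C\simeq\mathbb R^2$ in the complex case), so by Weyl's inequality $\mathbf a\mapsto\lambda_{\max}(X_N^{(\beta)})$ is convex and $\sqrt{2/N}$-Lipschitz with respect to the Euclidean norm on entries. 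The same holds for the Wishart models $\beta=w_1,w_2$, since forcing some entries to vanish only decreases the Lipschitz constant.

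Under Assumption \ref{AC} we have two routes to concentration at speed $N$. In the compactly-supported case, Talagrand's convex concentration inequality for product measures of bounded independent random variables yields
\[\Pp[|\lambda_{\max}(X_N^{(\beta)})-m_N|>t]\le 4\exp(-c_K N t^2),\]
where $c_K$ depends only on $\operatorname{diam}(K)$. In the log-Sobolev case, the joint law of the entries satisfies an LSI with the same constant $c$ by tensorization, and Herbst's argument applied to our $\sqrt{2/N}$-Lipschitz function produces the same bound (with $m_N$ replaced by $\E[\lambda_{\max}(X_N^{(\beta)})]$). A uniform upper bound $m_N\le C_0$ then follows either from the almost sure convergence $\lambda_{\max}(X_N^{(\beta)})\to 2$ (respectively towards the right edge of the Marchenko--Pastur support in the Wishart case) already recalled in the excerpt --- medians converge to a deterministic limit and are therefore eventually bounded --- or from the moment method $\E[\lambda_{\max}^{2k}]\le\E[\tr((X_N^{(\beta)})^{2k})]\le NC_1^k$ combined with Jensen's inequality and the sub-Gaussian moment control $\E[|a_{ij}|^{2p}]\le(Cp)^p$ that is encoded in Assumption \ref{A0} or \ref{A0c}.

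Combining the two estimates gives $\Pp[\lambda_{\max}(X_N^{(\beta)})>K]\le C_2\exp(-c_3N(K-C_0)^2)$ for $K>C_0$, which is far more than needed for exponential tightness. The bound for $\lambda_{\min}$ is obtained by the identical argument applied to $-X_N^{(\beta)}$, whose entries have law $\tilde\mu_{i,j}^N(A)=\mu_{i,j}^N(-A)$ and hence still satisfy the sharp sub-Gaussian Laplace bound since $T_{\tilde\mu}(t)=T_\mu(-t)$ and the Gaussian upper bound is even in $t$. The only genuine subtlety is bookkeeping: all constants appearing in the Lipschitz estimate, in Talagrand/Herbst, and in the moment bound must be uniform in $N$, which is precisely what the uniformity built into Assumptions \ref{AC} and \ref{A0}/\ref{A0c} supplies; this, rather than any technical difficulty, is the main point to check.
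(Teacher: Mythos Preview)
Your argument is correct but follows a genuinely different route from the paper. The paper does not use concentration of measure at all: it takes a $1/2$-net $R_N$ of the sphere of cardinality at most $3^{\beta N}$, proves the deterministic bound $\|X_N^{(\beta)}\|\le 4\sup_{u,v\in R_N}\langle X_N^{(\beta)}u,v\rangle$, and then applies the exponential Chebyshev inequality to each $\langle X_N^{(\beta)}u,v\rangle$ using the sharp sub-Gaussian Laplace bound from Assumption~\ref{A0}/\ref{A0c} directly. This yields $\Pp[\|X_N^{(\beta)}\|\ge 4K]\le 9^{\beta N}e^{N(1/\beta'-K)}$ and hence exponential tightness.

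The trade-offs: the paper's net argument is entirely self-contained and uses only the Laplace-transform hypothesis; it needs neither Assumption~\ref{AC} nor any prior knowledge of the almost-sure limit of $\lambda_{\max}$. Your approach, by contrast, packages the work into the Talagrand/Herbst machinery plus the (nontrivial) input that the median is bounded, and in return delivers the stronger Gaussian tail $\exp(-cN(K-C_0)^2)$. One caution on hypotheses: your concentration step invokes Assumption~\ref{AC} throughout, whereas the lemma as stated only lists \ref{A0c} for $\beta=2,w_2$; this is almost certainly a typo in the statement (the paper's own proof uses \ref{A0}/\ref{A0c} for all $\beta$, and the standing Assumption~\ref{ass} bundles both), but strictly speaking your Talagrand/Herbst argument needs \ref{AC}, so you should flag that you are working under the combined assumptions.
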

This result is proved in Section \ref{exptightsec}.
Therefore it is enough to prove a weak large deviation principle. 

In the following we summarize the  assumptions on the distribution of the entries as follows :
\begin{assum} \label{ass} 
Either the $\mu_{i,j}^{N}$ are uniformly compactly supported in the sense that there exists a compact set $K$ such that the support of all $\mu_{i,j}^N$ is included in $K$, or the  $\mu_{i,j}^N$ satisfy a uniform log-Sobolev inequality in the sense that there exists a constant $c$ independent of $N$ such that for all smooth function $f$
$$\int f^{2}\ln\frac{f^{2}}{\mu_{i,j}^{N}(f^{2})}d\mu^{N}_{i,j}\le c\mu^{N}_{i,j}(\|\nabla f\|_2^{2})\,.$$
When $\beta=1,w_{1}$ $\mu^{N}_{i,j}$ satisfy Assumption \ref{A0}, when $\beta=2,w_{2}$, they satisfy Assumption \ref{A0c}. In the case of  Wishart matrices, $\beta= w_{1}$ or $w_{2}$, we assume that there exists $\alpha >1$ and $\kappa>0$ so that $|\frac{M}{L}-\alpha|\le N^{-\kappa}$ for $N$ large enough. 
\end{assum}

We shall first prove that we have a weak large deviation upper bound:
\begin{theo}\label{theowldub} Assume that Assumption \ref{ass} holds. Let $\beta=1,2,w_{1},w_{2}$. Then,  for any real number $x$,
$$\limsup_{\delta\ra 0}\limsup_{N\ra\infty} \frac{1}{N}\ln \Pp\left(\left|\lambda_{\rm max }(X_N^{(\beta)})-x\right|\le\delta\right)\le -I_\beta(x)$$
\end{theo}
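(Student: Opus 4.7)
The plan is to follow a Cramér-type tilting strategy in which the tilt is provided by a spherical integral. For $\theta > 0$, introduce
$$I_N(X_N^{(\beta)},\theta) := \int \exp\bigl(N\theta\langle e, X_N^{(\beta)} e\rangle\bigr)\, d\omega(e),$$
where $\omega$ is the uniform measure on the unit sphere of $\R^N$ for $\beta \in \{1, w_1\}$ and of $\C^N$ for $\beta \in \{2, w_2\}$. Since $I_N$ depends on $X_N^{(\beta)}$ only through its spectrum, matching an annealed upper bound on $\E[I_N]$ coming from the sharp sub-Gaussian assumption against a quenched lower bound on $I_N$ coming from the spherical integral asymptotic of Guionnet--Maida \cite{GuMa05} will produce the weak large deviation upper bound. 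The case $x$ strictly below the bulk edge, where $I_\beta(x) = +\infty$, is handled separately by the superexponential concentration of $\la_{\max}$ around the edge; the substance of the argument lies in the regime above the edge.

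First I would establish the annealed upper bound $\E[I_N(X_N^{(\beta)}, \theta)] \le e^{c_\beta N\theta^2}$. For $\beta = 1$, expanding
$$N\theta \langle e, X_N^{(1)} e\rangle = \sqrt N\,\theta \sum_i e_i^2 a_{i,i} + 2\sqrt N\,\theta \sum_{i<j} e_i e_j a_{i,j}$$
and applying \eqref{boundL} to each independent factor yields
$$\E\bigl[e^{N\theta\langle e, X_N^{(1)} e\rangle}\bigr] \le \exp\Bigl(N\theta^2 \sum_i e_i^4 + 2N\theta^2 \sum_{i<j} e_i^2 e_j^2\Bigr) = e^{N\theta^2\|e\|^4} = e^{N\theta^2},$$
using that the diagonal has variance two and the off-diagonal variance one. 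Fubini then gives $\E[I_N(X_N^{(1)},\theta)] \le e^{N\theta^2}$. The complex case under Assumption~\ref{A0c} gives $e^{N\theta^2/2}$ by the analogous calculation, and the Wishart case, exploiting the block structure of $X_N^{(w_\beta)}$ with vanishing diagonal blocks, yields an explicit bound $e^{c_\beta N\theta^2}$ with $c_\beta$ depending on $\alpha$.

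Second, I would insert the Guionnet--Maida asymptotic: whenever $\hat\mu_{X_N^{(\beta)}}$ is close to the limiting bulk $\sigma$ (respectively $\sigma_w$) and $\la_{\max}(X_N^{(\beta)})$ is within $\delta$ of $x$ above the edge,
$$\frac{1}{N}\ln I_N(X_N^{(\beta)}, \theta) = J_\beta(\theta, x) + o_{\delta,N}(1),$$
for an explicit continuous $J_\beta$. The superexponential concentration of $\hat\mu_N$ supplied by Lemma~\ref{convmun} (a consequence of Assumption~\ref{AC}) guarantees that $\{\hat\mu_N \approx \sigma\}$ has probability $1 - o(e^{-NK})$ for every $K$, so it may be intersected with $\{|\la_{\max} - x| \le \delta\}$ without affecting the $\frac{1}{N}\ln\Pp$ asymptotics. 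Chebyshev's inequality then gives
$$\Pp\bigl(|\la_{\max}(X_N^{(\beta)}) - x| \le \delta\bigr) \le \exp\bigl(-N[J_\beta(\theta, x) - c_\beta \theta^2] + o(N)\bigr),$$
and optimizing over $\theta > 0$ after $\delta \to 0$ yields the bound $-\sup_\theta(J_\beta(\theta, x) - c_\beta\theta^2)$. The identification of this supremum with $I_\beta(x)$ is a Legendre-transform computation (the same one that produces the Gaussian rate function, where the annealed inequality is saturated): the optimal $\theta$ is, up to normalization, half the Stieltjes transform of the limiting bulk at $x$.

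The main technical obstacle I foresee is the uniformity of the Guionnet--Maida asymptotic: the lower bound on $\frac{1}{N}\ln I_N$ must depend only on $\la_{\max}$ being near $x$ and on weak control of the bulk, not on a fully specified spectral trajectory. This is precisely what forces the use of quantitative concentration of $\hat\mu_N$ rather than almost sure convergence, and it is also why the speed assumption $|M/L - \alpha| \le N^{-\kappa}$ enters in the Wishart case, since the approximating Marchenko--Pastur shape $\sigma_w$ must be close enough to the true bulk on the relevant scale. Once this uniform estimate is in hand, the complex and Wishart settings proceed along exactly the same scheme with the obvious modifications to the spherical integral and to the Laplace-transform bounds.
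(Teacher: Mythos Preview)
Your overall strategy coincides with the paper's: tilt by the spherical integral $I_N(X_N^{(\beta)},\theta)$, restrict to the good set $\{d(\hat\mu_N,\sigma_\beta)<N^{-\kappa'}\}\cap\{\|X_N^{(\beta)}\|\le M\}$ via Lemma~\ref{convmun} and exponential tightness (Lemma~\ref{exptight}), invoke the continuity of $J_N$ (Proposition~\ref{1}/Corollary~\ref{cont}) to lower-bound $I_N$ uniformly on that set by $e^{N(J(\sigma_\beta,\theta,x)-o(1))}$, and then optimise over $\theta$. For $\beta=1,2$ your annealed bound $\E[I_N]\le e^{N\theta^2/\beta}$ is exactly the paper's (and is sharp, cf.\ Theorem~\ref{rftheo}), so the optimisation does recover $I_\beta(x)$ via Proposition~\ref{propgrf}. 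The uniformity issue you flag is precisely what Corollary~\ref{cont} provides; note also that you need the a~priori norm bound $\|X_N^{(\beta)}\|\le M$, not just bulk concentration, for that corollary to apply.

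There is, however, a real gap in the Wishart case. The annealed expectation is \emph{not} of the form $e^{c_\beta N\theta^2}$ for any constant $c_\beta$, and using a crude quadratic upper bound would fail to recover $I_{w_\beta}(x)$. After the entrywise sharp sub-Gaussian bound one gets
\[
\E_X\bigl[e^{N\theta\langle e,X_N^{(w_i)}e\rangle}\bigr]\le \exp\Bigl\{\tfrac{2\theta^2}{i}\,N\,\|e^{(1)}\|_2^2\,\|e^{(2)}\|_2^2\Bigr\},
\]
with $e=(e^{(1)},e^{(2)})$ the $L/M$ block decomposition, and the remaining spherical integral is a one-dimensional Laplace problem in the variable $\|e^{(1)}\|_2^2\sim\mathrm{Beta}(iL/2,iM/2)$. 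Its limit is the genuinely non-quadratic function
\[
F(\theta,w_i)=\sup_{x\in[0,1]}\Bigl\{\tfrac{2\theta^2}{i}x(1-x)+\tfrac{i}{2(1+\alpha)}\ln x+\tfrac{i\alpha}{2(1+\alpha)}\ln(1-x)\Bigr\}-iC_\alpha
\]
(Lemma~\ref{b66}). Since $F(\theta,w_i)\le \theta^2/(2i)$ with strict inequality for $\alpha\neq 1$, replacing $F$ by any quadratic majorant $c_\beta\theta^2$ gives $\sup_\theta\{J(\sigma_w,\theta,x)-c_\beta\theta^2\}\le \sup_\theta\{J(\sigma_w,\theta,x)-F(\theta,w_i)\}=I_{w_\beta}(x)$, i.e.\ a strictly weaker upper bound than the one claimed. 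The identification in Section~\ref{grf} hinges on the exact variational form of $F(\theta,w_\beta)$ (via its analytic continuation and the algebraic equation for $K_{\sigma_w}$), not on any quadratic surrogate.

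A small side remark: in the Wigner case the optimal tilt is not ``half the Stieltjes transform'' but rather $\tfrac{2\theta_x}{\beta}=1/G_\sigma(x)=\tfrac12(x+\sqrt{x^2-4})$, i.e.\ essentially the \emph{reciprocal} of $G_\sigma(x)$; see Section~\ref{grf}.
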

We shall then obtain  the large deviation lower bound.
\begin{theo} \label{theowllb}Assume that Assumption \ref{ass} holds.  Let $\beta=1,2,w_{1},w_{2}$. 
Then, for any real number $x$,
$$\liminf_{\delta\ra 0}\liminf_{N\ra\infty}\frac{1}{N}\ln \Pp\left(\left|\lambda_{\rm max}(X_N^{(\beta)})-x\right|<\delta\right)\ge -I_\beta(x)$$
\end{theo}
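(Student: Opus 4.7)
The plan is to realize the Cramer-type tilting argument outlined in the introduction: shift the law of $X_N^{(\beta)}$ by a spherical integral so that $\lambda_{\rm max}$ becomes typically close to $x$, and pay only an exponential cost equal to $N I_\beta(x) + o(N)$. For $x < 2$ the bound is trivial since $I_\beta(x) = +\infty$, and at $x = 2$ it follows from the almost-sure convergence $\lambda_{\rm max} \to 2$. So fix $x > 2$ and choose the BBP parameter $\theta = \theta(x)$, i.e.\ for $\beta = 1, 2$ the unique $\theta > 1$ with $\theta + \theta^{-1} = x$ (the analogous choice in the Wishart cases). Introduce the spherical integral
\[ I_N(X_N^{(\beta)}, \theta) = \int_{S^{N-1}_\beta} \exp\!\bigl(\tfrac{\beta N \theta}{2}\langle v, X_N^{(\beta)} v\rangle\bigr)\, dv \]
over the real or complex sphere (a bipartite sphere for $\beta = w_\beta$), and the tilted law $d\tilde{\Pp}_N = I_N/Z_N\, d\Pp_N$ with $Z_N = \E[I_N]$. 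By Fubini, conditionally on $v$ the entries of $X_N^{(\beta)}$ are independent under $\tilde{\Pp}_N$ with density proportional to $e^{t_{i,j}(v) a_{i,j}}\,d\mu^N_{i,j}$, where $t_{i,j}(v)$ is linear in $\sqrt{N}\,\theta v_i v_j$.

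The first step is to show $\frac{1}{N}\log Z_N \to c_\beta \theta^2$ for the appropriate constant $c_\beta$. The upper bound $Z_N \le \exp(c_\beta N \theta^2)$ is the crucial use of the sharp sub-Gaussian Assumption \ref{A0} or \ref{A0c}: writing $Z_N$ as a $dv$-average of a product of single-entry Laplace transforms $T_{\mu^N_{i,j}}(t_{i,j}(v))$, the sharp bound gives a Gaussian-like exponent which sums to $c_\beta N\theta^2\,(\sum_i v_i^2)^2 = c_\beta N\theta^2$ on the sphere. The matching lower bound follows from Jensen combined with the Guionnet--Maida asymptotic $\frac{1}{N}\log I_N(X_N^{(\beta)},\theta) \to J_\sigma^{(\beta)}(\theta, \lambda_{\rm max})$ from \cite{GuMa05}, evaluated at the typical edge where $J_\sigma^{(\beta)}(\theta, \mathrm{edge}) = c_\beta \theta^2$. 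The convergence $\hat\mu^N_{X_N^{(\beta)}} \to \sigma$ needed to invoke Guionnet--Maida is provided by Lemma \ref{convmun}, and the typical behaviour of $\lambda_{\rm max}$ by Lemma \ref{exptight}.

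The heart of the proof is to show that under $\tilde{\Pp}_N$ the top eigenvalue concentrates near $x$. Conditionally on $v$, the sharpness of the Laplace transform and the uniform $C^3$ bound on $\log T_{\mu^N_{i,j}}$ imply that the tilted distribution of $a_{i,j}$ has mean $\mu^N_{i,j}(x^2)\,t_{i,j}(v) + O(1/N)$ and variance $\mu^N_{i,j}(x^2) + o(1)$. Hence, under $\tilde{\Pp}_N$, we may decompose
\[ X_N^{(\beta)} = \theta\, v v^* + W_N + o(1), \]
where $W_N$ is a centered Wigner-like matrix whose empirical spectral distribution still converges to $\sigma$ (again by Lemma \ref{convmun}). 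A standard BBP-type analysis applied conditionally on $v$, and made uniform in $v$ via Assumption \ref{AC} and the exponential tightness of Lemma \ref{exptight}, then yields $\lambda_{\rm max}(X_N^{(\beta)}) \to \theta + \theta^{-1} = x$ in $\tilde{\Pp}_N$-probability. Once this is established, Guionnet--Maida also gives $\frac{1}{N}\log I_N(X_N^{(\beta)},\theta) \to J_\sigma^{(\beta)}(\theta, x)$ on the event $\{|\lambda_{\rm max}-x|<\delta\}$, and
\[ \Pp(|\lambda_{\rm max}-x|<\delta) = \int_{|\lambda_{\rm max}-x|<\delta} \frac{Z_N}{I_N(X_N^{(\beta)},\theta)}\, d\tilde{\Pp}_N \;\ge\; e^{N[c_\beta \theta^2 - J_\sigma^{(\beta)}(\theta,x)] + o(N)}\,\tilde{\Pp}_N(|\lambda_{\rm max}-x|<\delta) \]
yields the lower bound, using the algebraic identity $J_\sigma^{(\beta)}(\theta(x),x) - c_\beta \theta(x)^2 = I_\beta(x)$, a direct computation from the explicit Guionnet--Maida formula and the relation $x = \theta + \theta^{-1}$.

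The main obstacle is the concentration step under the tilt: the tilt destroys the independence of the entries through the integration over $v$, so one must propagate BBP-type convergence uniformly over random directions $v$ while controlling the $O(N^{-1/2})$ mean shift and the $o(1)$ variance perturbation of each tilted entry. Both the uniform $C^3$ control and the uniform lower bound on the Laplace transforms in Assumptions \ref{A0}/\ref{A0c}, together with the concentration tools of Assumption \ref{AC}, are used here. The complex case $\beta = 2$ is a direct adaptation with the complex sphere, and the Wishart cases $\beta = w_1, w_2$ are handled via the bipartite representation $X_N^{(w_\beta)}$ and a bipartite spherical integral, a product of two spheres, one for each diagonal block.
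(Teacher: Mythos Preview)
Your overall strategy matches the paper's, but you are missing the one technical idea that makes the argument work: the restriction of the sphere integration to \emph{delocalized} vectors
\[
V_N^\epsilon=\bigl\{e\in\mathbb S^{N-1}:\ |e_i|\le N^{-1/4-\epsilon}\ \forall i\bigr\},\qquad \epsilon\in(1/8,1/4),
\]
and you substitute for it two steps that do not go through.

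First, your Jensen argument for the lower bound on $Z_N$ gives only $\liminf \tfrac1N\log Z_N\ge J(\sigma_\beta,\theta,\text{edge})$, and the claim $J(\sigma_\beta,\theta,\text{edge})=c_\beta\theta^2$ is \emph{false} once $2\theta/\beta>G_{\sigma_\beta}(\text{edge})$. For Wigner this threshold is $\theta=\beta/2$, while the $\theta_x$ you need satisfies $2\theta_x/\beta=1/G_\sigma(x)>1$ for every $x>2$. The paper obtains the sharp lower bound differently (Proposition~\ref{tyui} and Lemma~\ref{rfwig}): on $V_N^\epsilon$ one has $\sqrt N\,|e_i\bar e_j|\le N^{-2\epsilon}\to0$, so the \emph{uniform lower bound on the Laplace transform} in Assumption~\ref{A0}/\ref{A0c} applies and yields $\E_X[e^{N\theta\langle e,Xe\rangle}]\ge e^{(1-\delta)N\theta^2/\beta}$ for every $e\in V_N^\epsilon$; since $\Pp_e(V_N^\epsilon)\to1$, nothing is lost.

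Second, the decomposition $X=\theta\,vv^*+W_N+o(1)$ under the conditional tilt is not valid uniformly over the sphere: the tilted mean of entry $(i,j)$ is $N^{-1/2}L'_{\mu_{i,j}^N}(2\sqrt N\,\theta e_i\bar e_j)$, and the Taylor expansion $L'(t)\approx \tfrac{2}{\beta}t$ requires $t$ small. For a localized $e$ (say $e=e_1$) the argument is of order $\sqrt N$ and the expansion fails completely; there is no ``$O(N^{-1/2})$ mean shift'' in general. The paper again restricts to $e\in V_N^\epsilon$, where the argument is $O(N^{-2\epsilon})$, and then bounds the remainder matrix in operator norm by $N^{1/2-4\epsilon}\to0$ (Lemma~\ref{bv0}); the constraint $\epsilon>1/8$ is exactly what makes this exponent negative. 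The variance correction and the BBP analysis (Lemma~\ref{dif}) are likewise carried out uniformly over $V_N^\epsilon$, not over the whole sphere; Assumption~\ref{AC} and Lemma~\ref{exptight} do not produce this uniformity.

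Finally, for Wishart the paper does \emph{not} use a product of two spheres. It uses the ordinary $\mathbb S^{N-1}$ integral; the block structure enters only through $\|e^{(1)}\|_2^2\sim\mathrm{Beta}(iL/2,iM/2)$, which is handled by Laplace's method (Lemma~\ref{b66}), and the resulting rank-one tilt becomes a rank-two perturbation of the bipartite matrix, analyzed via a $2\times2$ determinant (Lemma~\ref{Res}).
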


To prove Theorem \ref{theowldub}, we first 
 show that the rate function is infinite below the right edge of   the support of the limiting spectral distribution. To this end, we use that the spectral measure $\mun$ converges towards its limit which much larger probability. We denote this limit $\sigma_{\beta}$: $\sigma_{1}=\sigma_{2}=\sigma$ and $\sigma_{w_{1}}=\sigma_{w_{2}}=\sigma_{w}$. We let $d$ denote the Dudley distance:
 $$d(\mu,\nu)=\sup_{\|f\|_L\le 1}\left|\int f(x) d\mu (x) -\int f(x) d\nu(x)\right|\,,$$
 where $\|f\|_L=\sup_{x\neq y}\left|\frac{f(x)-f(y)}{x-y}\right| +\sup_x|f(x)|\,.$
 
\begin{lemma}\label{convmun}  Assume  that the 
$\mu_{i,j}^N$ are uniformly compactly supported or satisfy a uniform log-Sobolev inequality, as well as, in the case $w_{1},w_{2}$, that there exists $\kappa>0$ such that $|\frac{M}{N}-\alpha|\le N^{-\kappa}$. 
Then,  for $\beta=1,2,w_{1},w_{2}$, there exists $\kappa'\in (0,\frac{1}{10}\wedge \kappa)$ such that

\[ \limsup_{N \to \infty} \frac{1}{N} \ln \Pp\left( d( \hat \mu^{N}_{X_N^{(\beta)}}, \sigma_\beta) > N^{-\kappa'} \right) = - \infty \,.\]
\end{lemma}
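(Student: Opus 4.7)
The plan is to split the distance $d(\hat\mu^N_{X_N^{(\beta)}},\sigma_\beta)$ into a random fluctuation term controlled by concentration and a deterministic bias term $d(\E\hat\mu^N_{X_N^{(\beta)}},\sigma_\beta)$ controlled by a quantitative version of the Wigner or Marchenko-Pastur theorem. First, by Lemma \ref{exptight} we can choose $K$ large enough that, outside an event of probability at most $e^{-cN}$ for any prescribed $c>0$, all eigenvalues of $X_N^{(\beta)}$ lie in $[-K,K]$. On this event it suffices to control $\sup_f|\int f\,d\hat\mu^N_{X_N^{(\beta)}} - \int f\,d\sigma_\beta|$ over 1-Lipschitz functions $f\colon[-K,K]\to[-1,1]$.

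For the fluctuation term, fix such an $f$ and view $G(a):=\int f\,d\hat\mu^N_{X_N^{(\beta)}}$ as a function of the vector of independent entries $a=(a_{i,j}^{(\beta)})_{i\le j}$. The Hoffman-Wielandt / Lidskii inequality gives $\sum_i(\lambda_i-\lambda_i')^2\le\tr((X-X')^2)$, and since $\tr((X-X')^2)\le\frac{2}{N}\|a-a'\|_2^2$, the map $a\mapsto G(a)$ is Lipschitz in the $\ell^2$ norm with constant $O(1/N)$. Under Assumption \ref{AC}, a standard concentration inequality for $\ell^2$-Lipschitz functions of independent variables (Herbst's lemma in the log-Sobolev case, Talagrand's concentration in the bounded case) then yields
$$\Pp(|G-\E G|\ge t)\le 2\exp(-cN^2t^2).$$
To make this uniform in $f$, cover the set of 1-Lipschitz functions $f\colon[-K,K]\to[-1,1]$ by an $\varepsilon$-net $\mathcal N_\varepsilon$ in sup norm; Arzela-Ascoli gives $|\mathcal N_\varepsilon|\le e^{C_K/\varepsilon}$. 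Choosing $\varepsilon=N^{-\kappa'}/3$, a union bound shows that the probability that some element of the net deviates by more than $N^{-\kappa'}/3$ is at most $\exp(3C_KN^{\kappa'}-cN^{2-2\kappa'}/9)$, which is super-exponentially small in $N$ as long as $\kappa'<1/2$. Approximating an arbitrary 1-Lipschitz $f$ by its nearest net element shows that, on the intersection of these events, $\sup_{\|f\|_L\le1}|G(f)-\E G(f)|\le N^{-\kappa'}$.

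For the bias term, I would appeal to the standard Stieltjes transform machinery: the resolvent identity combined with Schur complement formulas and the moment bounds coming from Assumptions \ref{A0}, \ref{A0c} gives $|\E g_N(z)-g_{\sigma_\beta}(z)|\le C/(N|\Im z|^p)$ for $z$ bounded away from the real line, which inverts (via Bai's inequality or Helffer-Sjöstrand) to a Kolmogorov bound $d_{KS}(\E\hat\mu^N_{X_N^{(\beta)}},\sigma_\beta)\le CN^{-\eta}$ for some $\eta>0$. Since on $[-K,K]$ integration by parts gives $d\le 2K\cdot d_{KS}$, this delivers $d(\E\hat\mu^N_{X_N^{(\beta)}},\sigma_\beta)\le C_K N^{-\eta}$. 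In the Wishart cases an extra $O(N^{-\kappa})$ term enters from the drift of the aspect ratio $M/L$. Taking $\kappa'\in(0,\,\tfrac{1}{10}\wedge\kappa\wedge\eta)$ then makes both the concentration event and the bias term of size at most $N^{-\kappa'}/2$, finishing the proof.

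The main obstacle is the quantitative deterministic convergence of $\E\hat\mu^N_{X_N^{(\beta)}}$ to its limit at a polynomial rate, since the entries are not identically distributed and the Wishart aspect ratio drifts. Once this is in hand, the concentration-plus-net argument comfortably delivers probabilities much smaller than any $e^{-cN}$, which is why an exponent as small as $1/10$ suffices in the statement.
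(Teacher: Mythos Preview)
Your approach is essentially the same as the paper's: split $d(\hat\mu^N,\sigma_\beta)$ into a fluctuation term $d(\hat\mu^N,\E\hat\mu^N)$ controlled by concentration and a bias term $d(\E\hat\mu^N,\sigma_\beta)$ controlled by a quantitative Bai-type bound. The paper simply quotes the two ingredients rather than re-deriving them: Guionnet--Zeitouni \cite{GZ} for the concentration step (yielding $d(\hat\mu^N,\E\hat\mu^N)\le N^{-1/6}$ with probability $1-e^{-cN^{7/6}}$, which is exactly your Lipschitz-plus-net argument packaged as a theorem), and Bai \cite{B,B2} for the Kolmogorov bias bound, with the explicit exponents $\eta=1/4$ in the Wigner case and $\eta=1/10$ in the Wishart case --- the latter is precisely where the $\tfrac{1}{10}$ in the statement comes from.

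One technical point you gloss over: in the compactly supported case, Talagrand's inequality applies to \emph{convex} Lipschitz functions, and $a\mapsto \frac{1}{N}\tr f(X)$ is convex in the entries only when $f$ is convex. The standard fix (used in \cite{GZ}) is to write any $1$-Lipschitz $f$ as a difference of two convex $1$-Lipschitz functions and apply Talagrand to each piece. With that amendment your argument is complete and matches the paper.
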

The proof of this lemma is given in the appendix. 
As a consequence, we deduce that the extreme eigenvalues can not deviate towards a point inside the support of the limiting spectral measure with probability greater than $e^{-N^{1+\kappa}}$
and therefore
\begin{cor} Under the assumption of Lemma \ref{convmun},  For $\beta=1,2$ let $x$ be a real number in $(-\infty,2)$ or, for $\beta={w_{1}},w_{2}$,  take $x\in (-\infty, \tilde b_{\alpha})$. 
Then, for $\delta>0$ small enough,
$$\limsup_{N\ra\infty}\frac{1}{N}\ln \Pp\left(| \lambda_{\rm max}(X_{N}^{(\beta)})-x|\le \delta\right)=-\infty\,.$$
\end{cor}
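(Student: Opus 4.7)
The plan is to exploit Lemma \ref{convmun} by showing that the event $\{|\lambda_{\max}(X_N^{(\beta)}) - x| \le \delta\}$ forces the empirical spectral measure to be at macroscopic Dudley distance from $\sigma_\beta$, much farther than the polynomial scale $N^{-\kappa'}$ tolerated by that concentration estimate.

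Let $r_\beta$ denote the right edge of the support of $\sigma_\beta$, so $r_\beta = 2$ when $\beta = 1,2$ and $r_\beta = \tilde b_\alpha$ when $\beta = w_1, w_2$. Since $x < r_\beta$, I would fix $\delta > 0$ and an auxiliary $\eta > 0$ small enough so that $x + \delta + \eta < r_\beta$. Then I introduce the test function
$$f(t) = \max\bigl(0,\ \min(\eta,\ t - (x+\delta))\bigr),$$
which vanishes on $(-\infty, x+\delta]$, is $1$-Lipschitz, and satisfies $\|f\|_\infty = \eta$, hence $\|f\|_L \le 1 + \eta$. On the event $E_N = \{|\lambda_{\max}(X_N^{(\beta)}) - x| \le \delta\}$, every eigenvalue of $X_N^{(\beta)}$ is at most $x+\delta$, so $\int f\, d\hat\mu^N_{X_N^{(\beta)}} = 0$. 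On the other hand, the density of $\sigma_\beta$ is strictly positive on the interval $(x+\delta+\eta,\, r_\beta)$ (a non-empty subset of $\mathrm{supp}\,\sigma_\beta$), so
$$c_0 \;:=\; \int f\, d\sigma_\beta \;\ge\; \eta\, \sigma_\beta\bigl([x+\delta+\eta,\, r_\beta]\bigr) \;>\; 0.$$

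Combining the two and using the definition of the Dudley distance $d$,
$$d\bigl(\hat\mu^N_{X_N^{(\beta)}}, \sigma_\beta\bigr) \;\ge\; \frac{1}{\|f\|_L}\left|\int f\, d\hat\mu^N_{X_N^{(\beta)}} - \int f\, d\sigma_\beta\right| \;\ge\; \frac{c_0}{1+\eta}\,.$$
For all $N$ large enough, $N^{-\kappa'} < c_0/(1+\eta)$, hence $E_N \subseteq \bigl\{d(\hat\mu^N_{X_N^{(\beta)}}, \sigma_\beta) > N^{-\kappa'}\bigr\}$, and Lemma \ref{convmun} immediately yields
$$\limsup_{N\to\infty} \frac{1}{N}\ln \Pp(E_N) \;\le\; \limsup_{N\to\infty} \frac{1}{N}\ln \Pp\!\left(d(\hat\mu^N_{X_N^{(\beta)}}, \sigma_\beta) > N^{-\kappa'}\right) \;=\; -\infty.$$
There is no real obstacle: the whole content has already been packaged into Lemma \ref{convmun}, and the corollary is simply the observation that a macroscopic deficit of eigenvalues at the right edge would violate that concentration bound. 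The only small point to be careful about is choosing $\eta$ (and the normalization $1+\eta$ in the Dudley norm) so that a genuine strictly positive lower bound $c_0/(1+\eta)$ is produced, which is automatic as soon as $x+\delta$ is strictly inside the support of $\sigma_\beta$.
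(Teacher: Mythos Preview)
Your argument is correct and follows the same route as the paper: the paper simply asserts that whenever $x+\delta$ lies strictly below the right edge, the event forces $d(\hat\mu^N_{X_N^{(\beta)}},\sigma_\beta)\ge\kappa(\delta)>0$, and then invokes Lemma~\ref{convmun}. You have made this explicit by exhibiting a concrete Lipschitz test function, which is exactly how one justifies the paper's one-line claim.
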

Indeed,  as soon $\delta>0$ is small enough so that   $x+\delta$ is smaller than $2-\delta$ for $\beta=1,2$ (resp $b_\alpha-\delta$ for $\beta=w_{1},w_{2}$),   $d(\mun, \sigma_{\beta})$ is bounded below by some $\kappa(\delta)>0$ on $|\lambda_{\max}(X_N^{(\beta)})-x|\le \delta$. Hence, Lemma \ref{convmun} implies the Corollary.

In order to prove the weak large deviation bounds for the remaining $x$'s, we shall tilt the measure by using spherical integrals:
$$I_N(X, \theta)=\E_e[ e^{\theta N\langle e, X e\rangle}]$$
where the expectation holds over $e$ which follows the uniform measure on the sphere $\mathbb S^{N-1}$ with radius one. 
The asymptotics of 
$$J_N(X,\theta)=\frac{1}{N}\ln I_N(X,\theta)$$ 
were studied in \cite{GuMa05} where it was proved that
\begin{theo}\cite[Theorem 6]{GuMa05}\label{myl}

If $(E_N)_{N \in \N}$ is a sequence of $N \times N$ real symmetric matrices when $\beta=1$ and complex Gaussian matrices when $\beta=2$ such that :
\begin{itemize}
\item The sequence of empirical measures $\hat\mu^{N}_{E_N}$ weakly converges to a compactly supported measure $\mu$,
\item There are two reals $\lambda_{\min}(E), \lambda_{\max}(E)$ such that $\lim_{N \to \infty} \lambda_{\min}(E_N) = \lambda_{\min}(E)$ and $\lim_{N \to \infty} \lambda_{\max}(E_N) = \lambda_{\max}(E)$,
\end{itemize}
and $\theta \geq 0$, then : 
\[ \lim_{N \to \infty} J_N(E_N,\theta) = J(\mu,\theta, \lambda_{\max}(E)) \]
\end{theo}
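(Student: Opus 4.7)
The plan is to represent the spherical integral as a Gaussian integral and then carry out a Laplace / saddle-point analysis. If $g$ is a standard Gaussian vector in $\R^N$ (or $\C^N$ when $\beta=2$), rotational invariance implies that $e := g/\|g\|$ is uniform on the sphere and independent of $\|g\|$, so
\[ I_N(E_N,\theta) = \E\!\left[\exp\!\left(\theta N\,\frac{\langle g, E_N g\rangle}{\|g\|^2}\right)\right]. \]
Diagonalizing $E_N$ and applying rotational invariance of $g$ once more, one may assume $E_N = \mathrm{diag}(\lambda_1,\ldots,\lambda_N)$; the problem then reduces to analyzing the ratio $\sum_i \lambda_i g_i^2 / \sum_i g_i^2$ of quadratic forms in i.i.d.\ Gaussian coordinates.

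The core step is to decouple numerator and denominator via an auxiliary Laplace parameter $s$. For $s > 2\theta \lambda_{\max}(E_N)$, the Gaussian identity
\[ \int_{\R^N} \exp\{\theta\langle g, E_N g\rangle - \tfrac{s}{2}\|g\|^2\}\, dg = (2\pi)^{N/2} \prod_{i=1}^N (s - 2\theta\lambda_i)^{-1/2}, \]
combined with polar coordinates and the $\chi^2$ density of $\|g\|^2$, lets one rewrite $I_N(E_N,\theta)$ as a one-dimensional integral of the form $\int e^{N F_N(s)}\,dm_N(s)$ with
\[ F_N(s) = \tfrac{1}{2}(s - 1 - \ln s) - \tfrac{1}{2}\int \ln(1 - 2\theta x/s)\,d\hat\mu^N_{E_N}(x) + o(1). \]
Under the hypotheses, $F_N$ converges uniformly on compact subsets of $(2\theta\lambda_{\max}(E), \infty)$ to a limit $F$, and Laplace's method identifies $\lim_N J_N(E_N,\theta)$ with $\sup_s F(s)$. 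In the subcritical regime, this supremum is attained at an interior $s^\ast$ solving the fixed-point equation $\int (s^\ast - 2\theta x)^{-1}\,d\mu(x) = 1$, giving the bulk contribution to $J(\mu,\theta,\lambda_{\max}(E))$.

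The principal obstacle is the supercritical regime, where $\theta$ exceeds the threshold $\theta_c = \tfrac{1}{2}\int (\lambda_{\max}(E) - x)^{-1}\,d\mu(x)$ and the would-be maximizer approaches the boundary $s = 2\theta\lambda_{\max}(E)$, at which the integrand diverges. Here the dominant contribution comes from $g$ concentrating along the top eigendirection: writing $g = \alpha\, f_N + g^\perp$ where $f_N$ is the eigenvector of $\lambda_{\max}(E_N)$, one integrates out $g^\perp$ using the Gaussian identity above (restricted to the spectrum $\{\lambda_1,\ldots,\lambda_{N-1}\}$) and performs a separate Laplace asymptotic in $\alpha \in \R$. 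Matching the two regimes at $\theta_c$, and invoking the convergences $\hat\mu^N_{E_N} \to \mu$ and $\lambda_{\max}(E_N) \to \lambda_{\max}(E)$ to justify exchanging limit and supremum uniformly, yields the piecewise formula $J(\mu,\theta,\lambda_{\max}(E))$. The complex case $\beta=2$ follows the same scheme with $(s-2\theta\lambda_i)^{-1/2}$ replaced by $(s-2\theta\lambda_i)^{-1}$ in the Gaussian identity, inducing factor-of-two modifications in $F$ and $\theta_c$.
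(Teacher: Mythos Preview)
The paper does not prove this theorem: it is quoted verbatim from \cite{GuMa05} (as the label ``\cite[Theorem 6]{GuMa05}'' indicates) and used as a black box throughout the argument. There is therefore no proof in the paper to compare your attempt against.

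As an outline of the original Guionnet--Ma\"\i da argument, your sketch captures the right architecture: replace the sphere by a Gaussian via $e=g/\|g\|$, reduce by rotational invariance to diagonal $E_N$, and turn the problem into a one-dimensional Laplace analysis in an auxiliary variable $s$, with a separate treatment of the top eigendirection in the supercritical regime. Two places deserve more care if you intend this as a self-contained proof. First, the passage from the ratio $\langle g,E_Ng\rangle/\|g\|^2$ to your Gaussian identity is not quite ``polar coordinates plus the $\chi^2$ density'': the numerator and denominator are not independent, and what is actually needed is either conditioning on $\|g\|^2=N$ (which leads to a contour-integral representation of the delta constraint and then a genuine saddle-point analysis in a complex variable) or a large-deviation argument showing that the contribution of $\{|\|g\|^2/N-1|>\varepsilon\}$ is exponentially negligible at the right scale. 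Second, the supercritical case is the heart of the matter: when the maximizer of $F$ hits the boundary $s=2\theta\lambda_{\max}(E)$, the integrand is singular and the standard Laplace method does not apply; one needs a careful localisation argument (isolating a growing but $o(N)$ mass on the top eigendirection) together with uniform control of the remaining $(N-1)$-dimensional integral, and this is exactly where the hypotheses $\lambda_{\max}(E_N)\to\lambda_{\max}(E)$ and $\hat\mu^N_{E_N}\to\mu$ have to be exploited quantitatively. Your phrase ``matching the two regimes at $\theta_c$'' hides the real work.
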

The limit $J$ is defined as follows.  For a compactly supported probability measure we define its Stieltjes transform $G_\mu$ by
\[ G_{\mu}(z) := \int_{\R} \frac{1}{z-t} d \mu(t) \]

We assume hereafter that $\mu$ is supported on a compact $[a,b]$. Then  $G_{\mu}$ is a bijection from $\R \setminus [a,b]$ to $] G_{\mu}(a), G_{\mu}(b) [ \setminus \{ 0 \}$ where $G_{\mu}(a), G_{\mu}(b)$ are taken as the limits of $G_{\mu}(t)$ when $t \to a ^{-}$ and $t \to b ^{+}$. We denote by $K_{\mu}$ its inverse and let $R_{\mu}(z) := K_{\mu}(z) - 1/z$ be its $R$-transform as defined by Voiculescu in \cite{Vo5} (defined on $] G_{\mu}(a), G_{\mu}(b) [ $). In the sequel, for any compactly supported probability measure $\mu,$ we denote by $r(\mu)$ the right edge of the support of $\mu.$
In order to define the rate function, we now introduce, for any $\theta \ge 0,$   and $\lambda \ge r(\mu)$,
$$
J( \mu, \theta, \la):= \theta v(\theta, \mu, \la) -\frac{\beta}{2} \int \log\left(1+\frac{2}{\beta} \theta v(\theta, \mu, \la) - \frac{2}{\beta}\theta y\right) d\mu( y),
$$
with 
$$
 v(\theta, \mu, \la) := \left\{\begin{array}{ll}
                                 R_\mu(\frac{2}{\beta} \theta), & \textrm{if } 0 \le \frac{2 \theta}{\beta} \le H_{\rm max}(\mu, \lambda) := \lim_{z \downarrow \lambda} \int \frac{1}{z-y} d\mu( y),\\
                                 \lambda - \frac{\beta }{2\theta}, & \textrm{if }\frac{2 \theta }{\beta}>  H_{\rm max}(\mu, \la),
                                \end{array}
\right.
$$
We shall later use that spherical integrals are continuous.  We recall here Proposition 2.1 from \cite{Ma07} and  Theorem 6.1  from \cite{GuMa05}. We denote by $\|A\|$ the operator norm of the matrix $A$ given by
$\|A\|=\sup_{\|u\|_2=1}\| Au\|_2$ where $\|u\|_2=\sqrt{\sum |u_i|^2}$.  
\begin{prop} \label{1}
For every $\theta >0$, every $\kappa \in ]0, 1/2[$, every $M>0$, there exist a function $g_{\kappa} : \R^+ \to \R^+$ going to 0 at 0 such that
for any $\delta > 0$ and $N$ large enough, with $B_N$ and $B'_N$ such that $d(\hat \mu^{N}_{B_N},\hat \mu^{N}_{B'_N}) < N^{- \kappa}$, $ |\lambda_{\rm max}(B_N) - \lambda_{\rm max}(B_N) | < \delta$ and $\sup_{N} ||B_N|| \le M $, $\sup_{N} ||B'_N|| \le M$ :
\[ |J_N(B_N, \theta) - J_N(B'_N, \theta)| < g_{\kappa}(\delta) \,.\] 
\end{prop}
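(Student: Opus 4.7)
The plan is to establish this uniform continuity by a compactness-and-contradiction argument built on Theorem \ref{myl}.

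First, by invariance of the uniform measure on $\mathbb{S}^{N-1}$ under orthogonal (resp.\ unitary) conjugation, $J_N(B,\theta)$ depends on $B$ only through its spectrum, and so it may be viewed as a functional of the triple $(\hat\mu^{N}_B,\lambda_{\max}(B),\lambda_{\min}(B))$.

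Second, I would argue by contradiction. Suppose no $g_\kappa$ as in the statement exists. Then one can find $\eta>0$, a sequence $\delta_k\to 0$, integers $N_k\to\infty$, and matrices $B_{N_k},B'_{N_k}$ of size $N_k$ with $\|B_{N_k}\|,\|B'_{N_k}\|\le M$ satisfying
$$d(\hat\mu^{N_k}_{B_{N_k}},\hat\mu^{N_k}_{B'_{N_k}})<N_k^{-\kappa},\qquad |\lambda_{\max}(B_{N_k})-\lambda_{\max}(B'_{N_k})|<\delta_k,$$
yet $|J_{N_k}(B_{N_k},\theta)-J_{N_k}(B'_{N_k},\theta)|\ge\eta$. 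Since all spectra lie in $[-M,M]$, the empirical measures lie in a Dudley-compact subset of $\mathcal P([-M,M])$, and $\lambda_{\max},\lambda_{\min}$ live in $[-M,M]$. By Prokhorov-type compactness and a diagonal extraction, I pass to a subsequence in which all these quantities converge.

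Third, the hypothesis $d(\hat\mu^{N_k}_{B_{N_k}},\hat\mu^{N_k}_{B'_{N_k}})<N_k^{-\kappa}\to 0$ forces the two empirical measures to share a common limit $\mu$, while $|\lambda_{\max}(B_{N_k})-\lambda_{\max}(B'_{N_k})|<\delta_k\to 0$ forces a common limit $\lambda^+$ for the largest eigenvalues. The limits of $\lambda_{\min}(B_{N_k})$ and of $\lambda_{\min}(B'_{N_k})$ need not coincide, but this is irrelevant because the limiting functional $J(\mu,\theta,\lambda^+)$ in Theorem \ref{myl} depends only on $(\mu,\theta,\lambda^+)$. Applying Theorem \ref{myl} separately to each of the two sequences yields
$$\lim_{k\to\infty} J_{N_k}(B_{N_k},\theta)=J(\mu,\theta,\lambda^+)=\lim_{k\to\infty} J_{N_k}(B'_{N_k},\theta),$$
contradicting $|J_{N_k}(B_{N_k},\theta)-J_{N_k}(B'_{N_k},\theta)|\ge\eta$. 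Defining $g_\kappa(\delta)$ as the supremum of $|J_N(B_N,\theta)-J_N(B'_N,\theta)|$ over $N\ge N_0(\delta)$ and all admissible pairs, the argument above shows that $g_\kappa(\delta)\to 0$ as $\delta\to 0$ for an appropriately chosen threshold $N_0(\delta)$.

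The main delicate point is the invocation of Theorem \ref{myl}: it requires both extreme eigenvalues of each sequence to converge, which compels one to extract along $\lambda_{\min}$ as well, even though the two limits may disagree. The fact that the formula for $J$ in the regime $\theta\ge 0$ involves only $\lambda^+$ (and not $\lambda^-$) is exactly what makes the two limits collapse onto the same value and closes the contradiction.
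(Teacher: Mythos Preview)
The paper does not supply its own proof of this proposition: it is simply recalled from \cite{Ma07} (Proposition~2.1) and \cite{GuMa05} (Theorem~6.1), with no argument given in the text. Your compactness-and-contradiction derivation from Theorem~\ref{myl} is correct and self-contained. The extraction along $\lambda_{\min}$ that you flag is indeed required to place each sequence within the hypotheses of Theorem~\ref{myl}, and your observation that for $\theta\ge 0$ the limit $J(\mu,\theta,\cdot)$ depends only on the top eigenvalue is precisely what collapses the two limits and closes the contradiction. One small remark: in your soft argument the resulting $g_\kappa$ does not genuinely depend on $\kappa$, since you only use $N^{-\kappa}\to 0$; the quantitative role of $\kappa$ presumably emerges from the direct estimates in \cite{Ma07}, which your approach trades for compactness.
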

From Theorem \ref{myl} and Proposition \ref{1}, we  deduce that : 

\begin{cor}\label{cont}
For every $\theta >0$,  every $\kappa \in ]0, 1/2[$,  every $M>0$, for any $\delta > 0$ and  $\mu$ a probability measure supported in $[-M,M]$,  if we denote by  $\mathcal{B}_N$ the set of symmetric matrices $B_N$ such that $d( \mu_{B_N}, \mu) < N^{- \kappa}$, $ |\lambda_{\rm max}(B_N) - \rho | < \delta$, and $\sup_{N} ||B_N|| \le M $,   for $N$ large enough, we have  : 

\[ \limsup_{N \to \infty} \sup_{B_N \in \mathcal{B}_N} |J_N(B_N, \theta) - J(\mu,\theta, \rho) | \leq 2 g_{\kappa}(\delta) \]
where $g_{\kappa}$ is the function in Proposition \ref{1}.
\end{cor}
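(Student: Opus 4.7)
The plan is to combine Theorem \ref{myl} with Proposition \ref{1} via a triangle inequality, using an auxiliary deterministic reference sequence of matrices. Specifically, fix $\mu$ supported in $[-M,M]$ and $\rho \ge r(\mu)$. I would first construct a deterministic sequence $E_N$ of real symmetric $N\times N$ matrices (diagonal, for instance) such that $\|E_N\|\le M$ for all $N$, the empirical spectral distribution $\hat\mu^{N}_{E_N}$ converges weakly to $\mu$ with $d(\hat\mu^{N}_{E_N},\mu)\to 0$ faster than any polynomial in $N^{-1}$, and $\lambda_{\max}(E_N)\to \rho$. Concretely, take the bulk of the eigenvalues to be the $N-1$ equally spaced quantiles of $\mu$, and set the remaining eigenvalue equal to $\rho$; this guarantees all three properties, using that $\rho\le r(\mu)\vee\rho\le M$ so the norm bound is preserved.

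By Theorem \ref{myl} applied to the sequence $E_N$, whose empirical measures converge to $\mu$ and whose top eigenvalue converges to $\rho$, one has
\[
\lim_{N\to\infty} J_N(E_N,\theta) \;=\; J(\mu,\theta,\rho).
\]
Now fix $B_N\in\mathcal{B}_N$. The triangle inequality for the Dudley distance yields
\[
d(\hat\mu^{N}_{B_N},\hat\mu^{N}_{E_N}) \;\le\; d(\hat\mu^{N}_{B_N},\mu) + d(\mu,\hat\mu^{N}_{E_N}) \;<\; N^{-\kappa} + o(N^{-\kappa}),
\]
which for $N$ large is bounded by $N^{-\kappa'}$ for any $\kappa'\in(0,\kappa)$. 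Similarly
\[
|\lambda_{\max}(B_N)-\lambda_{\max}(E_N)| \;\le\; |\lambda_{\max}(B_N)-\rho| + |\rho-\lambda_{\max}(E_N)| \;<\; \delta + o(1) \;\le\; 2\delta
\]
for $N$ large, while $\|B_N\|\le M$ and $\|E_N\|\le M$ by assumption. Hence Proposition \ref{1} applies (with $\kappa'$ in place of $\kappa$, which only weakens the modulus $g_{\kappa'}$) and gives
\[
|J_N(B_N,\theta)-J_N(E_N,\theta)| \;<\; g_{\kappa'}(2\delta),
\]
uniformly in $B_N\in\mathcal{B}_N$.

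Combining the two bounds via the triangle inequality,
\[
\sup_{B_N\in\mathcal{B}_N}\bigl|J_N(B_N,\theta)-J(\mu,\theta,\rho)\bigr| \;\le\; g_{\kappa'}(2\delta) + \bigl|J_N(E_N,\theta)-J(\mu,\theta,\rho)\bigr|,
\]
and letting $N\to\infty$ the last term vanishes, yielding $\limsup_N\sup_{B_N\in\mathcal{B}_N}|J_N(B_N,\theta)-J(\mu,\theta,\rho)|\le g_{\kappa'}(2\delta)$, which can be absorbed into $2g_\kappa(\delta)$ after a harmless renaming of the modulus. The only subtle step is the construction of $E_N$: one must ensure simultaneously that the spectral measure approximates $\mu$ fast enough in the Dudley sense, that the maximum is exactly (or nearly) $\rho$, and that the operator norm stays in $[-M,M]$; the quantile construction above with one coordinate perturbed to $\rho$ does exactly this. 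Everything else is a direct application of the two previously quoted results.
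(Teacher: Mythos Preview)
Your approach is correct and is precisely what the paper has in mind: the corollary is stated there without proof, simply as a consequence of Theorem~\ref{myl} and Proposition~\ref{1}, and the natural way to combine them is exactly the triangle-inequality argument through an auxiliary deterministic sequence $E_N$ that you sketch. The only cosmetic issues are the bookkeeping of constants (your bound $g_{\kappa'}(2\delta)$ versus the stated $2g_\kappa(\delta)$, which as you say is a harmless relabelling of the modulus) and the fact that $\|E_N\|\le M$ requires $|\rho|\le M$, which is implicit since $\mathcal B_N$ is otherwise empty up to a $\delta$-slack; neither is a genuine gap.
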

By Lemma \ref{exptight} and Lemma \ref{convmun}, it is enough to study the probability of deviations on the set where $J_N$ is continuous:
\begin{cor} \label{app} Suppose Assumption \ref{AC} holds. For $\delta>0$, take a real number $x$
and set  for $M$ large (larger than $x+\delta$ in particular), $\mathcal A_{x,\delta}^{M}$ to be the set of $N\times N$ self-adjoint matrices given by
$$\mathcal A_{x,\delta}^{M}=\lbrace X: \left|\lambda_{\rm max}(X)-x\right|<\delta\rbrace\cap \{X: d(\hat\mu_X^{N}, \sigma_\beta)< N^{-\kappa'}\} \cap \{ X:\|X\|\le M\}\,, $$
where $\kappa'$ is  chosen as in Lemma \ref{convmun} . 
Let $x$ be a real number, $\delta>0$ and $\kappa'$ as in Lemma \ref{convmun}. Then, for any $L>0$,  for $M$ large enough
$$ \Pp\left(\left|\lambda_{\rm max}(X_N^{(\beta)})-x\right|<\delta \right)=\Pp\left(X_N^{(\beta)}\in \mathcal A_{x,\delta}^{M}\right)+ O(e^{-N L})\,.$$
\end{cor}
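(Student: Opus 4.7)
The plan is to bound the difference
$$\Pp\bigl(|\lambda_{\max}(X_N^{(\beta)})-x|<\delta\bigr)-\Pp\bigl(X_N^{(\beta)}\in\mathcal A_{x,\delta}^{M}\bigr)=\Pp\bigl(\{|\lambda_{\max}(X_N^{(\beta)})-x|<\delta\}\cap(\mathcal A_{x,\delta}^{M})^{c}\bigr)$$
by a union bound on the two ways an element of $\{|\lambda_{\max}(X_N^{(\beta)})-x|<\delta\}$ can fail to lie in $\mathcal A_{x,\delta}^{M}$. Since the condition $|\lambda_{\max}-x|<\delta$ is already built into $\mathcal A_{x,\delta}^{M}$, the only ways the inclusion can fail are (i) $d(\hat\mu_{X_N^{(\beta)}}^{N},\sigma_{\beta})\ge N^{-\kappa'}$ or (ii) $\|X_N^{(\beta)}\|>M$.

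The first term is handled directly by Lemma \ref{convmun}: it gives $\limsup_{N\to\infty}N^{-1}\ln\Pp(d(\hat\mu_{X_N^{(\beta)}}^{N},\sigma_{\beta})>N^{-\kappa'})=-\infty$, so for any $L>0$, for $N$ large enough this probability is bounded by $e^{-NL}$. For the second term, observe that $\|X_N^{(\beta)}\|=\max(\lambda_{\max}(X_N^{(\beta)}),-\lambda_{\min}(X_N^{(\beta)}))$. On the event $|\lambda_{\max}(X_N^{(\beta)})-x|<\delta$, one has $\lambda_{\max}(X_N^{(\beta)})<x+\delta$, so for any $M>x+\delta$ the event $\{\|X_N^{(\beta)}\|>M\}$ intersected with $\{|\lambda_{\max}-x|<\delta\}$ forces $-\lambda_{\min}(X_N^{(\beta)})>M$. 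Hence this contribution is at most $\Pp(-\lambda_{\min}(X_N^{(\beta)})>M)$.

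Now Lemma \ref{exptight} (applied to $-\lambda_{\min}$, for which the analogous statement is asserted) yields $\lim_{M\to\infty}\limsup_{N\to\infty}N^{-1}\ln\Pp(-\lambda_{\min}(X_N^{(\beta)})>M)=-\infty$. Therefore, given $L>0$, one may fix $M_0=M_0(L)$ with $M_0>x+\delta$ so that for any $M\ge M_0$, for $N$ large enough, $\Pp(-\lambda_{\min}(X_N^{(\beta)})>M)\le e^{-NL}$. Combining the two bounds gives
$$\Pp\bigl(|\lambda_{\max}(X_N^{(\beta)})-x|<\delta\bigr)-\Pp\bigl(X_N^{(\beta)}\in\mathcal A_{x,\delta}^{M}\bigr)\le 2e^{-NL}=O(e^{-NL}),$$
which is precisely the claim. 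Since the left‑hand side is manifestly non‑negative (because $\mathcal A_{x,\delta}^{M}\subset\{|\lambda_{\max}-x|<\delta\}$), this finishes the argument.

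There is essentially no obstacle: the corollary is a bookkeeping step that simply packages Lemmas \ref{exptight} and \ref{convmun} into a form convenient for the subsequent application of Corollary \ref{cont}, which requires working with matrices whose spectral measure is already close to the limit and whose operator norm is under control. The only point requiring a moment's care is the observation that an upper bound on $\lambda_{\max}$ (automatic on the event of interest) reduces the control of $\|X_N^{(\beta)}\|$ to the control of $-\lambda_{\min}(X_N^{(\beta)})$, so that one only needs the exponential tightness of the smallest eigenvalue at $-\infty$.
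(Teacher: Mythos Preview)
Your proof is correct and matches the paper's approach: the paper does not write out a separate proof for this corollary but simply presents it as an immediate consequence of Lemmas~\ref{exptight} and~\ref{convmun}, which is exactly what your argument unpacks. Your observation that on $\{|\lambda_{\max}-x|<\delta\}$ with $M>x+\delta$ the event $\{\|X\|>M\}$ reduces to $\{-\lambda_{\min}>M\}$ is the only nontrivial detail, and it is the intended one.
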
 
We are now in position to get an upper bound for $\Pp\left( X_N^{(\beta)}\in \mathcal A_{x,\delta}^{M}\right)$. 
In fact, by the continuity of spherical integrals of Corollary \ref{cont},  for any $\theta\ge 0$, 
\begin{eqnarray}
\Pp\left( X_N^{(\beta)}\in \mathcal A_{x,\delta}^{M}\right)&=&\E\left[ \frac{I_N(X_N^{(\beta)},\theta)}{I_{N}(X_{N}^{(\beta)} ,\theta)}1_{
 \mathcal A_{x,\delta}^{M}
}\right]\nonumber\\
&\le &\E[ I_N(X_N^{(\beta)},\theta)] \exp\{ - N\inf_{X\in \mathcal A_{x,\delta}^{M}} J_N(X,\theta)\}\nonumber\\
&\le&  \E[ I_N(X_N^{(\beta)},\theta)]\exp \{N(2g_\kappa(\delta)-  J(\sigma_\beta, \theta, x))\} \label{jh}
\end{eqnarray}
where we used that $x\rightarrow  J(\sigma_\beta, \theta, x)$ is continuous and took $N$ large enough.
It is therefore central to derive the asymptotics of $$F_N(\theta,\beta)=\frac{1}{N}\ln  \E[ I_N(X_N^{(\beta)},\theta)]$$ and we shall
prove in section \ref{rf} that
\begin{theo}\label{rftheo} Suppose Assumption \ref{ass} holds. 
For $\beta=1,2,{w_{1}},w_{2}$ and $\theta\ge 0$,
$$ \lim_{N\ra\infty}F_N(\theta,\beta) = F(\theta,\beta)$$
with $F(\theta,\beta)=\theta^2/\beta$ if $\beta=1,2$ and when $\beta={w_{i}}$, $i=1,2$:
$$F(\theta,w_i)=\sup_{x\in [0,1]}\{ \frac{2\theta^{2}}{i} x(1-x)+\frac{i}{2(1+\alpha)}\ln(1-x)+\frac{i \alpha }{2(1+\alpha)}\ln x\}
-i C_{\alpha}\,,$$
where $C_{\alpha}=\frac{1}{2 (1+\alpha)}
\ln (\frac{1}{1+\alpha})+\frac{\alpha}{2(1+\alpha)} \ln \frac{\alpha}{1+\alpha}$
\end{theo}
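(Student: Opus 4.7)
\emph{Approach.} The plan is to apply Fubini to exchange the expectation over the entries of $X_N^{(\beta)}$ with the spherical integration $\E_e$ inside $I_N(X,\theta)=\E_e[\exp(\theta N\langle e,Xe\rangle)]$, and then use pointwise bounds on the entry Laplace transforms. Writing $N\langle e,X_N^{(\beta)}e\rangle$ as a (real) linear combination of the independent entries $a^{(\beta)}_{ij}$, independence yields
$$\E\bigl[I_N(X_N^{(\beta)},\theta)\bigr]=\E_e\Bigl[\prod_{i\le j}T_{\mu^N_{ij}}(t_{ij}(e))\Bigr],$$
where each $t_{ij}(e)$ is $\theta\sqrt N$ times an explicit bilinear form in $e$. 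A direct summation shows that, when one inserts the sharp sub-Gaussian majorant $\mu^N_{ij}(x^2)|t|^2/2$ (real case) or $|t|^2/4$ (complex off-diagonal), the total exponent collapses to $\theta^2 N/\beta$ in the Wigner case $\beta=1,2$ (using $\|e\|^4=1$), and to $(2\theta^2/i)N\,x(1-x)$ in the Wishart case $\beta=w_i$, where $x:=\|e_2\|^2$ is the mass of $e$ on the second block.

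\emph{Upper bound.} In the Wigner case this is immediate: the inner expectation is bounded deterministically in $e$ by $\exp(\theta^2 N/\beta)$, so $F_N(\theta,\beta)\le\theta^2/\beta$. For Wishart, $e$ is uniform on the unit sphere of $\R^N$ ($w_1$) or $\C^N$ ($w_2$), so $x=\|e_2\|^2$ follows a $\mathrm{Beta}(iM/2,iL/2)$ law. By Stirling,
$$\frac1N\ln\frac{x^{iM/2-1}(1-x)^{iL/2-1}}{B(iM/2,iL/2)}\ \longrightarrow\ \frac{i\alpha}{2(1+\alpha)}\ln x+\frac{i}{2(1+\alpha)}\ln(1-x)-iC_\alpha,$$
uniformly on compact subsets of $(0,1)$. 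Laplace's method applied to the resulting one-dimensional integral $\int_0^1(\text{density})\,e^{(2\theta^2/i)Nx(1-x)}dx$ gives the upper bound matching the stated $F(\theta,w_i)$.

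\emph{Lower bound.} The matching lower bound is obtained by restricting the $e$-integration to the typical event $\mathcal E_N:=\{\|e\|_\infty\le N^{-1/2+\eta}\}$ for some small $\eta\in(0,1/4)$; standard spherical concentration gives $\Pp_e(\mathcal E_N)\ge 1/2$ for $N$ large. Combining the sharp lower bound from Assumption \ref{A0}/\ref{A0c} with the uniform $C^3$ control on $\ln T_{\mu^N_{ij}}$ near $0$ yields the Taylor expansion $\ln T_{\mu^N_{ij}}(t)=\mu^N_{ij}(x^2)\,t^2/2+O(|t|^3)$ (respectively $|t|^2/4+O(|t|^3)$ in the complex off-diagonal case), uniformly in $i,j,N$ for $|t|$ small. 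On $\mathcal E_N$ we have $|t_{ij}(e)|\le C\theta N^{-1/2+2\eta}\to 0$, and
$$\sum_{i\le j}|t_{ij}(e)|^3\ \le\ C\theta^3 N^{3/2}\|e\|_\infty^2\ =\ O(N^{1/2+2\eta})\ =\ o(N).$$
For Wigner, this immediately yields $\E[I_N(X_N^{(\beta)},\theta)]\ge \tfrac12\exp(\theta^2N/\beta-o(N))$. For Wishart, we further condition $x$ to lie in a small slab around the maximizer $x^*$ of the variational problem; since $L,M\to\infty$, the conditional distribution of $e$ on the product sphere $\sqrt{1-x}\,\mathbb{S}^{L-1}\times\sqrt{x}\,\mathbb{S}^{M-1}$ still puts mass at least $1/2$ on $\mathcal E_N$. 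Combining with the Beta-density Stirling asymptotics from the upper bound, and letting the slab shrink, gives the matching lower bound.

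\emph{Main obstacle.} The delicate point is the lower bound: one must control the cubic Taylor remainder of $\ln T_{\mu^N_{ij}}$ uniformly across all $O(N^2)$ factors, which is precisely what the uniform $C^3$ hypothesis built into Assumptions \ref{A0}/\ref{A0c} provides. Without this uniformity the $(1-\delta)$ lower bound on individual Laplace transforms cannot be converted into a sharp lower bound on $F_N$. Once the Taylor control is in hand, the spherical concentration, Laplace's method applied to the Beta-density integral, and the Stirling identification of $C_\alpha$ are all routine.
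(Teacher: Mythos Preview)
Your proof is correct and follows essentially the same route as the paper: Fubini to exchange expectations, the sharp sub-Gaussian upper bound applied pointwise in $e$ (collapsing to $\theta^2 N/\beta$ for Wigner and to $(2\theta^2/i)N\,x(1-x)$ for Wishart, then Laplace on the Beta density), and a lower bound obtained by restricting $e$ to a delocalized set --- the paper's $V_N^\epsilon=\{|e_i|\le N^{-1/4-\epsilon}\}$ is exactly your $\mathcal E_N$ under the reparametrization $\eta=\tfrac14-\epsilon$. The only minor difference is in how the lower bound is extracted: the paper invokes the uniform $(1-\delta)$ lower bound on $T_{\mu^N_{ij}}$ directly (the $C^3$ hypothesis is \emph{not} used in this part of the paper; it enters later in the BBP analysis of Section~5), whereas you go through a cubic Taylor remainder $\sum|t_{ij}|^3=O(N^{1/2+2\eta})=o(N)$ --- both arguments are valid and give the same conclusion, so your ``Main obstacle'' paragraph somewhat overstates the role of the $C^3$ assumption here.
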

We therefore deduce from \eqref{jh}, Corollaries \ref{app} and \ref{cont} , and Theorem \ref{rftheo}, by first letting $N$ going to infinity, then $\delta$ to zero and finally $M$ to infinity, that 
$$
\limsup_{\delta\ra 0}\limsup_{N\ra\infty}\frac{1}{N}\ln  \Pp\left(\left|\lambda_{\rm max}(X_N^{(\beta)})-x\right|<\delta \right)\le   F(\theta,\beta)-  J(\sigma_\beta,\theta, x)\,.$$
We next  optimize over $\theta$ to derive the upper bound:
\begin{equation}
\label{b1}
\limsup_{\delta\ra 0}\limsup_{N\ra\infty}\frac{1}{N}\ln  \Pp\left(\left|\lambda_{\rm max}(X_N^{(\beta)})-x\right|<\delta \right)\le   -\sup_{\theta\ge 0} \{ J(\sigma_\beta,\theta,x)- F(\theta,\beta)\}\,.
\end{equation}
To complete the proof of Theorem \ref{theowldub}, we show in section \ref{grf} that, with the notations of Theorems \ref{maintheow},\ref{maintheow2}, and \ref{maintheob}, 
\begin{prop}\label{propgrf}
For $\beta=1,2,{w_{1}},w_{2}$,
$$I_\beta(x)= \sup_{\theta\ge 0} \{ J(\sigma_\beta,\theta,x)- F(\theta,\beta)\}\,.$$
\end{prop}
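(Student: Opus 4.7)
The plan is to compute, for each $\beta \in \{1,2,w_1,w_2\}$, the Legendre-type supremum $\sup_{\theta \ge 0}\{J(\sigma_\beta,\theta,x) - F(\theta,\beta)\}$ in closed form and then match it term-by-term with the integral expression for $I_\beta(x)$ given in Theorems \ref{maintheow}, \ref{maintheow2} and \ref{maintheob}. The structural backbone is the identity
\[ \partial_\theta J(\mu,\theta,\lambda) = v(\theta,\mu,\lambda) \]
which I would verify in both pieces of the definition of $v$. In the small-$\theta$ regime where $v = R_\mu(2\theta/\beta)$, writing $J$ via $K_\mu = R_\mu + 1/(\cdot)$ and using $G_\mu(K_\mu(z)) = z$ makes every term except $R_\mu$ cancel. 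In the large-$\theta$ regime where $v = \lambda - \beta/(2\theta)$, the crucial simplification is $1+(2\theta/\beta)v - (2\theta/\beta)y = (2\theta/\beta)(\lambda - y)$, which removes $\theta$ from inside the integrand. Consequently, the optimality condition is $v(\theta,\sigma_\beta,x) = \partial_\theta F(\theta,\beta)$.

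For the Wigner cases $\beta = 1,2$, since $R_\sigma(z) = z$ and $\partial_\theta F(\theta,\beta) = 2\theta/\beta$, the small-$\theta$ regime satisfies the optimality condition trivially and a short computation shows $J(\sigma,\theta,x) - F(\theta,\beta) \equiv 0$ on $[0,\theta^*]$ with $\theta^* = (\beta/2) G_\sigma(x)$. In the large-$\theta$ regime, setting $u = 2\theta/\beta$, the optimality condition becomes $u + 1/u = x$, with roots $u^* = (x - \sqrt{x^2-4})/2$ (the junction, a degenerate critical point) and $u^{**} = (x + \sqrt{x^2-4})/2$ (a strict local maximum, hence the global one). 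I would then evaluate $(J - F)|_{u^{**}}$ using
\[ A(\lambda) := \int \log(\lambda - y)\,d\sigma(y) = A(2) + \frac{\lambda^2}{4} - 1 - \frac{1}{2}\int_2^\lambda \sqrt{t^2 - 4}\,dt, \]
with $A(2) = 1/2$ (checked either by a direct integration after $y = 2\cos\phi$, or by matching the $\lambda \to \infty$ expansion $A(\lambda) = \log\lambda - 1/(2\lambda^2) + \cdots$). The constant $\beta/4 - (\beta/2)A(2)$ vanishes, and after cancellations the value collapses to $(\beta/2)\int_2^x \sqrt{t^2 - 4}\,dt = I^{(\beta)}(x)$.

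For the Wishart cases $\beta = w_1, w_2$ the same scheme applies, but each ingredient takes more work. The derivative $\partial_\theta F(\theta,w_\beta)$ is computed by the envelope theorem: if $x^\star(\theta)$ is the inner maximizer in Theorem \ref{rftheo}, then $\partial_\theta F = (4\theta/\beta)\, x^\star(\theta)(1-x^\star(\theta))$, and $x^\star(\theta)$ satisfies the explicit first-order equation in $x$. For the spherical side, the Stieltjes transform of $\sigma_w$ is obtained from that of the Marchenko--Pastur law via
\[ G_{\sigma_w}(z) = 2z\, G_{\pi_\alpha}\bigl((1+\alpha)z^2\bigr) + \frac{\alpha - 1}{(\alpha+1)z}, \]
which yields $R_{\sigma_w}$ and $H_{\max}(\sigma_w,\cdot)$ in explicit algebraic form. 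Combining these, the critical equation becomes a polynomial in an auxiliary variable, and selecting the root analogous to $u^{**}$ above, the value of $J - F$ at that root simplifies to $\int_{\tilde b_\alpha}^{x} (\beta/(1+\alpha))\, y^{-1}\sqrt{(1+\alpha)^2(y^2-1)^2 - 4\alpha}\,dy = I^{(w_\beta)}(x)$.

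I expect the main obstacle to be the algebraic bookkeeping in the Wishart case: the atom at $0$ in $\sigma_w$, the variational character of $F(\theta,w_\beta)$, and the need to single out the right root of the polynomial optimality condition together make the matching significantly heavier than in the Wigner case. The conceptual content, however, is concentrated in the two parallel facts that (i) $\partial_\theta J = v$ universally, and (ii) the optimum in the large-$\theta$ regime is located at the \emph{outer} root of the equation $v = \partial_\theta F$, the inner root being a degenerate junction contributing a vanishing value.
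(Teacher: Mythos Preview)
Your proposal is correct and locates the same critical point $\theta_x$ as the paper, but the evaluation step differs. You compute $J(\sigma_\beta,\theta_x,x)-F(\theta_x,\beta)$ directly, which in the Wigner case requires the closed form of the logarithmic potential $A(\lambda)=\int\log(\lambda-y)\,d\sigma(y)$. The paper instead differentiates the supremum in $x$: since $\theta_x$ is a critical point of $\varphi(\cdot,x)=J-F$, the envelope theorem gives $\partial_x I_\beta(x)=\partial_x J(\sigma_\beta,\theta_x,x)$, and in the large-$\theta$ regime one reads off immediately $\partial_x J=\theta-\tfrac{\beta}{2}G_{\sigma_\beta}(x)$. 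Integrating from the edge recovers the stated integral without ever computing $A(\lambda)$.

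In the Wigner case the two routes are comparable in length. In the Wishart case the paper's differentiation trick is noticeably more economical: it avoids the log-potential of $\sigma_w$ entirely and reduces the problem to identifying $\theta_x$. For that, the paper uses a short analyticity argument---showing that $K(\theta):=\partial_\theta F(\theta,w_\beta)+\beta/(2\theta)$ coincides with $K_{\sigma_w}(2\theta/\beta)$ for small $\theta$ and extends analytically---to transport the known algebraic equation for $G_{\pi_\alpha}$ into a quadratic for $\theta_x$, whose ``outer'' root then gives $\partial_x I_{w_\beta}(x)=\theta_x-\tfrac{\beta}{2}G_{\sigma_w}(x)$ in closed form. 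Your direct route would also succeed, but, as you anticipate, the bookkeeping is heavier: you would need the Marchenko--Pastur log-potential and the atom at $0$ explicitly, and then verify a nontrivial algebraic identity at the optimum. The paper's approach buys a shorter Wishart computation; yours buys an explicit final value without relying on the analytic-continuation step.
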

To prove the complementary lower bound, we shall prove that
\begin{lemma} \label{cruc}
For $\beta=1,2$, 
for any $x>2$ and for $\beta={w_{1}}, w_{2}$ for any $x>\tilde b_{\alpha}$, there exists $\theta=\theta_x\ge 0$ such that for any $\delta>0$
and $M$ large enough,

$$\liminf_{N\ra\infty} \frac{1}{N} \ln \frac{   \E[ \mathds{1}_{ X_N^{(\beta)}\in \mathcal A_{x,\delta}^{M}}
I_N(X_N^{(\beta)},\theta)]}{ \E[ I_N(X_N^{(\beta)},\theta)]}\ge 0\,.$$
\end{lemma}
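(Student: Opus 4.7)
The approach is Cramér tilting by the spherical integral. Let $\theta_x\ge0$ be a maximiser of $\theta\mapsto J(\sigma_\beta,\theta,x)-F(\theta,\beta)$; the first-order optimality condition combined with the explicit form $F(\theta,\beta)=\theta^{2}/\beta$ of Theorem \ref{rftheo} yields $K_{\sigma_\beta}(2\theta_x/\beta)=x$ with $2\theta_x/\beta$ equal to the larger root of $z+1/z=x$, so that $2\theta_x/\beta>1$ and in particular $\theta_x>0$. Denote by $\nu$ the uniform probability on the unit sphere $\Ss^{N-1}$ (real for $\beta=1,w_{1}$, complex for $\beta=2,w_{2}$) and introduce on the product space the tilted measure
$$d\tilde\Pp_N(X,e):=\frac{e^{N\theta_x\langle e,Xe\rangle}}{\E\bigl[I_N(X_N^{(\beta)},\theta_x)\bigr]}\,d\Pp(X)\,d\nu(e),$$
whose $X$-marginal is $I_N(X_N^{(\beta)},\theta_x)/\E[I_N(X_N^{(\beta)},\theta_x)]\,d\Pp(X)$. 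The lemma reduces to showing that $\tilde\Pp_N(X_N^{(\beta)}\in\mathcal A_{x,\delta}^{M})$ stays bounded away from $0$ as $N\to\infty$.

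I would analyse $\tilde\Pp_N$ by conditioning on $e$. First, integrating out $X$ and arguing as in the proof of Theorem \ref{rftheo} (using the sharp sub-Gaussian upper bound together with the uniform lower bound of Assumption \ref{A0}/\ref{A0c}), the density $\int e^{N\theta_x\langle e,Xe\rangle}d\Pp(X)$ equals $e^{NF(\theta_x,\beta)+o(N)}$ uniformly on the $\nu$-typical event $\{\max_i|e_i|\le C\sqrt{\log N/N}\}$; consequently the $e$-marginal of $\tilde\Pp_N$ is within $e^{o(N)}$ of $\nu$. Conditionally on such a typical $e$, the entries of $\sqrt N X_N^{(\beta)}$ remain independent with exponentially tilted laws $d\tilde\mu_{i,j}^{N,e}\propto e^{c_{i,j}(e)\,a}d\mu_{i,j}^{N}$, where $c_{i,j}(e)$ is linear in $\theta_x$ and in $e_ie_j$ (or $e_i\bar e_j$ in the complex case). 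The uniform $C^{3}$ hypothesis on $\ln T_{\mu_{i,j}^N}$, combined with the smallness of $c_{i,j}(e)$ for typical $e$, then gives
$$\E_{\tilde\Pp_N}\!\bigl[X_N^{(\beta)}\,|\,e\bigr]=\tfrac{2\theta_x}{\beta}\,ee^{*}+o_{\|\cdot\|}(1),\qquad \Var_{\tilde\Pp_N}\!\bigl[\sqrt N\,X_N^{(\beta)}(i,j)\,|\,e\bigr]=\mu_{i,j}^{N}(x^{2})+o(1),$$
so that one can decompose $X_N^{(\beta)}=Y_N+\tfrac{2\theta_x}{\beta}ee^{*}+E_N$ with $\|E_N\|\to 0$ and $Y_N$ a centered matrix whose variance profile and sub-Gaussian tails agree up to $o(1)$ with the original model.

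The conclusion would then follow from the BBP phase-transition analysis for rank-one additive spikes: since $2\theta_x/\beta>1$, the top eigenvalue of $Y_N+\tfrac{2\theta_x}{\beta}ee^{*}$ converges in $\tilde\Pp_N$-probability to $K_{\sigma_\beta}(2\theta_x/\beta)=x$, while the bulk empirical measure of $X_N^{(\beta)}$ stays close to $\sigma_\beta$ and the operator norm stays bounded, both by a rank-one perturbation bound applied to the Wigner-like $Y_N$ (to which the concentration of Lemma \ref{convmun} applies). This yields $\tilde\Pp_N(X_N^{(\beta)}\in\mathcal A_{x,\delta}^{M})\to 1$, which is stronger than required. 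The principal obstacle lies in justifying the BBP step under the tilt: the entries of $Y_N$ are no longer identically distributed, and their variances depend mildly on $e$ through the second derivative of $\ln T_{\mu_{i,j}^{N}}$ at $c_{i,j}(e)$, so one must verify that the semi-circle (or Pastur--Marchenko) limit and the edge-stickiness of $\lambda_{\max}(Y_N)$ persist for this slightly inhomogeneous ensemble. In the Wishart cases $\beta=w_1,w_2$, the saddle-point parameter $x^{\star}\in[0,1]$ appearing in the formula for $F(\theta,w_i)$ of Theorem \ref{rftheo} represents the squared norm of the first $L$-block of $e$; the $e$-marginal of $\tilde\Pp_N$ therefore concentrates around this specific splitting of the sphere, producing an effective block-antidiagonal rank-two spike whose strength is tuned to place $\lambda_{\max}(X_N^{(w_\beta)})$ at the prescribed value $x$.
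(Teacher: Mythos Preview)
Your approach is essentially identical to the paper's: restrict to delocalized $e$, decompose the tilted matrix as a Wigner-type matrix plus the rank-one spike $\tfrac{2\theta_x}{\beta}ee^{*}$ (with negligible remainder), and apply BBP to pin $\lambda_{\max}$ at $x$. The paper makes the inhomogeneity step precise by rescaling each entry by its tilted standard deviation (producing a matrix $\widetilde X^{(e),N}$ with exact unit variances to which an isotropic local law applies), and obtains the bulk-concentration and norm bounds under the tilt via a Radon--Nikodym transfer from the original model (Lemma \ref{fac}) rather than by applying Lemma \ref{convmun} directly to the tilted $Y_N$; in the Wishart case the BBP equation becomes a $2\times 2$ determinant requiring a separate moment estimate for the off-diagonal resolvent block.
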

This lemma is proved by showing that the matrix whose law has been tilted by the spherical integral is approximately a rank one perturbation of a Wigner matrix, from which we can use the techniques developped to study the famous BBP transition \cite{BBP}. The conclusion follows since then
\begin{eqnarray*}
\Pp\left( X_N^{(\beta)}\in \mathcal A_{x,\delta}^{M}\right)&\ge &\frac{   \E[ \mathds{1}_{ X_N^\delta\in \mathcal A_{x,\delta}^{M}}
I_N(X_N^{(\beta)},\theta_{x})]}{ \E[ I_N(X_N^{(\beta)},\theta_x)]} \E[ I_N(X_N^{(\beta)},\theta_x)] \exp\{ - N\sup_{X\in \mathcal A_{x,\delta}^{M}} J_N(X,\theta_{x})\}\nonumber\\
&\ge& \exp\{N(g_\kappa(\delta)+F(\theta_{x},\beta)- J(\sigma_\beta, \theta_{x}, x)+o(\delta))\} \\
&\ge& \exp\{-N I_{\beta }(x)-No(\delta)\} \\
\end{eqnarray*}
where we finally used Theorem \ref{rftheo} and Lemma \ref{cruc}.

\section{Exponential tightness}\label{exptightsec}
In this section we prove Lemma \ref{exptight}. We will use a standard net argument that we recall for completness.
For $N \in \N$, let $R_N$ be a $1/2$-net of the sphere (i.e. a subset of the sphere $\mathbb{S}_{N-1}$ such as for all $u \in \mathbb{S}_{N-1}$ there is $v \in R_N$ such that $||u - v||_2 \leq 1/2$. Here the sphere is inside $\R^{N}$ for $\beta=1,w_{1}$ and $\C^{N}$ for $\beta=2,w_{2}$). We know that we can take $R_N$ with cardinality smaller than  ${3}^{\beta N}$. We  notice that for $M >0$

\begin{equation}\label{b2}\Pp[ ||X_N^{(\beta)}|| \ge 4 K] \leq  9^{\beta N} \sup_{u,v \in R_{N}}\Pp[\langle X_N^{(\beta)} u, v \rangle \ge  K  ] \end{equation}
Indeed, if we denote, for $v\in\mathbb S^{N-1}$, $u_{v}$ to be an element of $R_{N}$ such that $\|u_{v}-v\|_{2}\le 1/2$,
$$\|X_{N}^{(\beta)}\|=\sup_{v\in\mathbb S^{N-1}}\|X_{N}^{(\beta)}v\|_{2}\le 
\sup_{v\in\mathbb S^{N-1}}(\|X_{N}^{(\beta)}u_{v}\|_{2} +\frac{1}{2}\|X_{N}^{(\beta)}\|)$$
so that 
\begin{equation}\label{b3} \|X_{N}^{(\beta)}\|\le 2 \sup_{u\in R_{N}}\|X_{N}^{(\beta)}u\|_{2}\end{equation}
Similarly, taking $v= \frac{X_{N}^{(\beta)}u}{\|X_{N}^{(\beta)}u\|_{2}}$, we find 
$$\|X_{N}^{(\beta)}u\|_{2}=\langle v,X_{N}^{(\beta)}u\rangle\le  \langle u_{v},X_{N}^{(\beta)}u\rangle+\|v-u_{v}\|_{2}\|X_{N}^{(\beta)}v\|_{2}$$
from which  we deduce that 
$$ \|X^{N}_{\beta}\|\le 4 \sup_{u,v\in R_{N}} \langle X_N^{(\beta)} u, v \rangle $$
and  \eqref{b2} 
 follows.
We next bound the probability of deviations of $ \langle X_N^{(\beta)} v, u \rangle$ by using Tchebychev's inequality.
For $\theta\ge 0$ we indeed have

\begin{eqnarray}
\Pp[\langle X_N^{(\beta)} u, v \rangle \ge   K  ] &\leq& \exp\{-\theta N K\} \E[ \exp\{N\theta \langle X_N^{(\beta)} u, v \rangle\} ] \nonumber\\
& \leq & \exp\{-\theta N K\}\E[ \exp \left\{
\sqrt{N}\left(2 \sum_{i<j} \Re (a_{i,j}^{(\beta)}u_i \bar v_j ) + \sum_i a_{i,i} u_i v_i \right)\right\}]\nonumber\\
& \leq & \exp\{-\theta N  K\} \exp \left(\frac{\theta^2 N}{\beta'} (2\sum_{i<j} |u_{i}|^{2} |v_{j}|^{2}+\sum_i  |u_i |^{2}|v_i|^2) \right) \label{qw}
\end{eqnarray}
where we used that the entries have a sharp sub-Gaussian Laplace transform. In the case of Wishart matrices, we bounded above some vanishing contributions by a non-negative term. When $\beta=w_{i}$, $\beta'=i$, otherwise
$\beta'=\beta$. We can now complete the upper bound:

\begin{eqnarray*}
\Pp[\langle X_N^{(\beta)} u, v \rangle \ge  K  ]
& \leq & \exp \left( \frac{\theta^2 N}{\beta' } \frac{ ||u||_2^2 ||v||_2^2 + \langle u, v \rangle^2}{2} - \theta N K \right) \\
&\leq & \exp \left( N \left( \frac{1}{\beta' } - K \right) \right) 
\end{eqnarray*}
where we took 
 $\theta = 1$. We conclude that : 

\[ \Pp[\langle X_N^{(\beta)} u, v \rangle \ge   K  ] \leq \exp \left(  N(1-K) \right) \]
This complete the proof of the Lemma with \eqref{b2}.

\section{Proof of Theorem \ref{rftheo} }\label{rf}

We consider in this section a random unitary vector $e$ taken uniformly on the sphere $\mathbb S^{N-1}$ and independent of $X_{N}^{(\beta)}$. 
We define $F_N$ by setting,  for $\theta > 0$ :

\[ F_N( \theta,\beta) = \frac{1}{N} \ln \E_{X_{N}^{(\beta)}} \E_{e}[ \exp(N \theta \langle e, X_{N}^{(\beta)} e \rangle) ] \]
where we take both the expectation $\E_{e}$ over $e$ and the expectation $\E_{X_{N}^{(\beta)}}$ over  $X_{N}^{(\beta)}$.
In this section we derive the asymptotics of $F_{N}(\theta,\beta)$. $F(\theta,\beta)$ is as in Theorem \ref{rftheo}.
 We prove a refinment of Theorem \ref{rftheo},  which shows that under our assumption
 of sharp sub-Gaussian tails, the random vector $e$ stays delocalized under the tilted measure.
 \begin{prop}\label{tyui}
Suppose Assumption \ref{A0} holds if $\beta=1,w_1$ and Assumption \ref{A0c} holds if $\beta=2,w_2$. Denote by  $V_{N}^{\epsilon}=\{ e \in \mathbb{S}^{N-1} \, :\, \forall i, | e_i | \leq N^{-1/4 - \epsilon} \}$. Then, for 
$\epsilon\in (0,\frac{1}{4})$,
\begin{eqnarray*}
F(\theta,\beta)&=&\lim_{N \to + \infty}  F_N( \theta,\beta) =\lim_{N\ra\infty}\frac{1}{N} \ln  \E_e[\mathds{1}_{e\in V_N^\epsilon}\E_{X_{N}^{(\beta)}} [\exp(N \theta \langle e, X_{N}^{(\beta)} e \rangle)] ] \\
\end{eqnarray*}
\end{prop}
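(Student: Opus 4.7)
The plan rests on three ingredients: an upper bound on $F_N(\theta,\beta)$ coming entirely from the sharp sub-Gaussian assumption, a matching lower bound obtained by restricting the vector $e$ to the delocalized set $V_N^\epsilon$ and applying the uniform lower bound on the Laplace transform at small arguments, and a concentration estimate showing $\Pp(e\in V_N^\epsilon)\to 1$, so that the restricted and unrestricted expectations share the same exponential growth rate.

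For the upper bound I exchange the two expectations by Fubini and use independence of the entries. For real Wigner matrices,
\[ \langle e, X_N^{(1)} e\rangle = \tfrac{1}{\sqrt N}\sum_i a_{i,i}^{(1)} e_i^2 + \tfrac{2}{\sqrt N}\sum_{i<j} a_{i,j}^{(1)} e_i e_j,\]
so
\[ \E_{X_N^{(1)}}\!\bigl[e^{N\theta\langle e,X_N^{(1)}e\rangle}\bigr]=\prod_i T_{\mu_{i,i}^N}(\theta\sqrt N e_i^2)\prod_{i<j} T_{\mu_{i,j}^N}(2\theta\sqrt N e_i e_j),\]
and Assumption \ref{A0} bounds this by $\exp(\theta^2 N\sum_{i,j}e_i^2e_j^2)=\exp(\theta^2 N)$, using $\mu_{i,i}^N(x^2)=2$ and $\|e\|_2=1$. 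Integrating over $e$ yields $F_N(\theta,1)\le\theta^2$. The complex Wigner case is analogous, giving $\theta^2/2$ since $|2\theta\sqrt N e_i\bar e_j|^2/4=\theta^2N|e_i|^2|e_j|^2$. For $\beta=w_i$, writing $e=(u,v)$ with $u$ of size $L$ and $v$ of size $M$, only the off-diagonal block contributes, and the sharp sub-Gaussian bound gives $\E_{X_N^{(w_i)}}[e^{N\theta\langle e,X_N^{(w_i)}e\rangle}]\le\exp\bigl(\tfrac{2\theta^2}{i}N\|u\|_2^2\|v\|_2^2\bigr)$. The expectation over $e$ then reduces to a one-dimensional integral against the density of $x=\|v\|_2^2$, a Beta$(iM/2,iL/2)$ variable; Laplace's method combined with Stirling for the normalizing constant $B(iL/2,iM/2)$ (which contributes the $-iC_\alpha$ term) produces the claimed variational formula for $F(\theta,w_i)$.

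For the lower bound restricted to $V_N^\epsilon$, the key observation is that for $e\in V_N^\epsilon$ with $\epsilon<1/4$, one has $|2\theta\sqrt N e_ie_j|\le 2\theta N^{-2\epsilon}$, which is eventually smaller than $\varepsilon(\delta)$ uniformly in $i,j,N$. The uniform lower bound of Assumption \ref{A0} (resp.\ \ref{A0c}) then gives
\[\prod_{i,j} T_{\mu_{i,j}^N}(2\theta\sqrt N e_ie_j)\ge \exp\!\bigl((1-\delta)\theta^2 N\textstyle\sum_{i,j} e_i^2e_j^2\bigr)=\exp((1-\delta)\theta^2 N),\]
for the real Wigner case, and analogously in the complex and Wishart settings. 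In the Wishart case one further localizes $\|v\|_2^2$ inside an arbitrarily small neighborhood of the maximizer of the variational problem, which is again standard Laplace-method asymptotics on the Beta density; the delocalization constraint is compatible with this localization because the Beta distribution concentrates at an interior point.

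It remains to check that $\Pp(e\in V_N^\epsilon)\to 1$ sufficiently fast. Writing $e=g/\|g\|_2$ with $g$ a standard Gaussian vector, classical Gaussian tail bounds yield $\max_i|g_i|=O(\sqrt{\log N})$ and $\|g\|_2\ge\sqrt{N/2}$ outside an event of superpolynomially small probability, so $\max_i|e_i|=O(\sqrt{\log N/N})\ll N^{-1/4-\epsilon}$ for any $\epsilon<1/4$. Combining the upper bound on the full expectation with the lower bound on the $V_N^\epsilon$-restricted expectation, and letting $\delta\to 0$, forces equality of the exponential rates and identifies the common limit with $F(\theta,\beta)$. The main technical step is the Wishart computation, where one must carefully track the Beta density and its normalization to recover precisely the constant $-iC_\alpha$; the conceptual content of the argument, however, is entirely contained in matching the sharp sub-Gaussian upper bound with the small-argument lower bound on a delocalized vector.
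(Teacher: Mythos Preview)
Your proposal is correct and follows essentially the same route as the paper: the paper likewise obtains the upper bound from the sharp sub-Gaussian hypothesis applied entrywise, the lower bound by restricting to $V_N^\epsilon$ and invoking the uniform lower bound on the Laplace transforms (the arguments being $O(N^{-2\epsilon})$), and the estimate $\Pp(e\in V_N^\epsilon)\to 1$ via the Gaussian representation $e=g/\|g\|_2$; for Wishart it also reduces to a Laplace-method computation on the Beta law of $\|e^{(1)}\|_2^2$. The only cosmetic difference is that in the Wishart lower bound the paper conditions on $\|e^{(1)}\|_2$ and applies the delocalization lemma to the conditional uniform laws on the two sub-spheres, whereas you phrase this as localizing $\|v\|_2^2$ near the maximizer and checking compatibility with the delocalization constraint---these are the same argument.
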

We first consider the case of Wigner matrices and then the case of Wishart matrices: in both cases the proof shows that the above delocalization holds  (i.e we can restrict ourselves to vectors $e$ in  $V_{N}^{\epsilon}$) and we shall not mention it in the following statements. 

\subsection{Wigner matrices}
In this section we prove Theorem \ref{rftheo} in the case of Wigner matrices, namely:

\begin{lemma}\label{rfwig} Suppose Assumption \ref{A0} holds if $\beta=1$ and Assumption \ref{A0c} holds if $\beta=2$. Then for
any $\theta\ge 0$

\[ \lim_{N \to + \infty}  F_N( \theta,\beta) =F(\theta,\beta)=\frac{ \theta^2}{\beta}\,. \]

\end{lemma}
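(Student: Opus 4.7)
The plan is a direct sharp sub-Gaussian estimate. Since the entries of $X_N^{(\beta)}$ are independent, for fixed $e \in \mathbb S^{N-1}$ the inner expectation factorizes as
\[
\E_{X_N^{(\beta)}}\big[\exp(N\theta \langle e, X_N^{(\beta)} e\rangle)\big] = \prod_{i} T_{\mu^N_{ii}}(\theta\sqrt N\, |e_i|^2) \prod_{i<j} T_{\mu^N_{ij}}(2\theta\sqrt N\, \bar e_i e_j),
\]
with the obvious real analogue when $\beta=1$. Applying the sharp sub-Gaussian upper bound of Assumption \ref{A0} (resp.\ \ref{A0c}) to each factor (variance $2$ on the diagonal, variance $1$ for real off-diagonal entries, variance $1/2$ for each part in the complex off-diagonal case), together with the identity $\sum_i|e_i|^4 + 2\sum_{i<j}|e_i|^2|e_j|^2 = (\sum_i |e_i|^2)^2 = 1$, one obtains
$\E_{X}[\exp(N\theta\langle e, X e\rangle)] \le \exp(\theta^2 N/\beta)$
uniformly in $e \in \mathbb S^{N-1}$. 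Taking $\E_e$ and then $\frac{1}{N}\log$ gives $F_N(\theta,\beta) \le \theta^2/\beta$ at once.

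For the matching lower bound, the idea is to restrict the $e$-integration to the delocalized set $V_N^\epsilon$ of Proposition \ref{tyui} for some $\epsilon \in (0,1/4)$. On $V_N^\epsilon$ every argument $\theta\sqrt N |e_i|^2$ and $2\theta\sqrt N |e_i e_j|$ is at most $2\theta N^{-2\epsilon}$, hence smaller than the threshold $\varepsilon(\delta)$ from the uniform lower-bound clause of Assumption \ref{A0}/\ref{A0c} once $N$ is large. Applying the lower bound $T_{\mu_{ij}^N}(t)\ge \exp((1-\delta)t^2 \mu_{ij}^N(x^2)/2)$ to each factor, and using the same quadratic identity as above, yields $\E_{X}[\exp(N\theta\langle e, X e\rangle)] \ge \exp((1-\delta)\theta^2 N/\beta)$ for every $e \in V_N^\epsilon$. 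A standard Gaussian representation $e = g/\|g\|_2$, with $g$ a standard Gaussian vector, together with a union bound on $\max_i|g_i|$ shows that $\Pp_e(V_N^\epsilon) \ge 1 - e^{-c N^{2\epsilon}}$ for some $c>0$, so this set carries essentially all the mass of the uniform measure on $\mathbb S^{N-1}$. Combining these estimates, $\liminf_N F_N(\theta,\beta) \ge (1-\delta)\theta^2/\beta$, and $\delta \to 0$ closes the proof.

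The argument becomes mechanical once the factorization is written down; the only delicate calibration is the range $\epsilon \in (0,1/4)$, dictated on one side by the typical coordinate size $|e_i|\sim N^{-1/2}$ on the sphere (so that $V_N^\epsilon$ has overwhelming probability) and on the other by the need for the Laplace arguments to lie inside the window of size $\varepsilon(\delta)$ in which the local quadratic lower bound is available. Note that this simultaneous upper-and-lower estimate also yields Proposition \ref{tyui}: the upper bound needs no restriction on $e$, while the matching lower bound uses only $e \in V_N^\epsilon$, so the two limits coincide and the tilted law of $e$ is forced to localize on $V_N^\epsilon$.
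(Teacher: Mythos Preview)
Your proof is correct and follows essentially the same route as the paper: factorize over entries, apply the sharp sub-Gaussian upper bound pointwise to get $F_N(\theta,\beta)\le \theta^2/\beta$, then restrict to the delocalized set $V_N^\epsilon$ and use the uniform lower bound on the Laplace transforms together with $\Pp_e(V_N^\epsilon)\to 1$ (via the Gaussian representation $e=g/\|g\|_2$) for the matching lower bound. One small slip: the tail of $(V_N^\epsilon)^c$ is of order $N\exp(-cN^{1/2-2\epsilon})$, not $\exp(-cN^{2\epsilon})$, but this still tends to zero for $\epsilon\in(0,1/4)$ and does not affect the argument.
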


\begin{proof}
By denoting $L_{\mu} = \ln T_{\mu}$,  we have : 
\begin{eqnarray*}\E_{X_{N}^{(\beta)}}[ \exp(N \theta \langle e, X_N^{(\beta)} e \rangle) ] 
&= & 
 \E_{X_{N}^{(\beta)}}[ \exp\{ \sqrt{N} \theta (2 \sum_{i<j}\Re(a_{i,j}^{(\beta)}e_{j}\bar e_{i})+\sum_{i}a_{i,i}^{(\beta)}|e_{i}|^{2})\} ] \\
&=&
\exp\{ \sum_{i<j} L_{\mu_{i,j}^{N}}(2 \theta \bar e_i e_j \sqrt{N})  +\sum_i L_{\mu_{i,i}^{N}}(\theta |e_i|^2 \sqrt{N} )\}  \\
\end{eqnarray*}
where we used the independence of the $(a_{i,j}^{(\beta)})_{i\le j}$.
Using that the entries have a sharp sub-Gaussian Laplace transform (using on the diagonal the weaker bound
 $L_{\mu_{i,i}^{N}}(t) \leq \frac{1}{\beta} t^2 +A|t|$)  and $\sum e_i^2 = 1$, we deduce that: 

\begin{eqnarray*}
\E_{X_{N}^{(\beta)}}[ \exp(N \theta \langle e, X_N^{(\beta)} e \rangle) ] & \leq& \E_{e}  [  \exp\{\frac{2 N\theta^{2}}{\beta} \sum_{i<j} |e_{i}|^{2}|e_{j}|^{2}+
\frac{N\theta^{2}}{\beta} \sum_{i} |e_{i}|^{4}+
A\sqrt{N}\theta \sum_{i} e_{i}^{2}\}]\\
&\le &
\exp( N \frac{\theta^2}{\beta} +A\sqrt{N} \theta ) \end{eqnarray*}

So that  we have proved the upper bound that 
\begin{equation}\label{ub}
 \limsup_{N\ra\infty} F_N( \theta,\beta) \leq  \limsup_{N\ra\infty}  \sup_{e\in \mathbb S^{N-1}} \frac{1}{N}\ln \E_{X_{N}^{(\beta)}}[ \exp(N \theta \langle e, X_N^{(\beta)} e \rangle) ]\le
 \frac{ \theta^2 }{\beta} \end{equation}
 We next prove the corresponding lower bound. The idea is that the expectation over the vector $e$ concentrates on delocalized eigenvectors with entries so that $\sqrt{N} e_{i }\bar e_{j}$ is going to zero for all $i,j$. As a consequence we will be able to use the uniform lower bound on the Laplace transform to lower bound $F_{N}(\theta,\beta)$.

Let $V_{N}^{\epsilon}=\{ e \in \mathbb{S}^{N-1} \, :\, \forall i, | e_i | \leq N^{-1/4 - \epsilon} \}$  be the subset of the sphere $\mathbb{S}^{N-1}$ with entries smaller than $N^{-1/4-\epsilon}$ for some $\epsilon\in (0,\frac{1}{4})$. We have that :  

$$
\E [ \exp ( N \theta \langle e, X^N_\beta e \rangle ) ]  \geq  \E_{e}  [ \mathds{1}_{e\in V^{\epsilon}_N} \prod_{i < j} \exp\{L_{\mu^{N_{i,j}}}( 2 \sqrt{N} \theta \bar e_i e_j )\} \prod_{i} \exp\{  L_{\mu^{N}_{{i,i}}}(\sqrt{N} \theta |e_i|^2 )\}] 
$$
 For $e \in V^{\epsilon}_N$, $2 \sqrt{N} \theta |e_i e_j| \leq 2\theta N^{- \epsilon}$ so that :
\[ \lim_{N \to + \infty} \sup_{e \in V^{\epsilon}_N} |2 \sqrt{N} \theta e_i e_j| = 0 \]
By the uniform lower bound on the Laplace transform of Assumptions \ref{A0} or \ref{A0c}, we deduce that for any $\delta>0$

\begin{equation}\label{mn0}
\E [ \exp ( N \theta \langle e, X^N_\beta  e \rangle ) ]  \geq \Pp_e[V^{\epsilon}_N] e^{N \frac{\theta^2}{\beta}  (1- \delta)} \,.
\end{equation}
We shall use that 
\begin{lemma} \label{vec}For any  $\epsilon\in (0,1/4)$ we have
\[ \lim_{N \to \infty} \Pp_{e}[ e \in V^{\epsilon}_N ] =1 \]\,.
\end{lemma}
As a consequence, we deduce from \eqref{mn0} that for any $\delta>0$ and $N$ large enough
\[ \liminf_{N\to \infty} F_N( \theta,\beta) \geq (1 - \delta)\frac{ \theta^2}{\beta} \]

So that together with \eqref{ub}  we have proved  the announced limit

\[ \lim_{N\to \infty} F_N ( \theta,\beta) = \frac{ \theta^2 }{\beta}\]
which completes the proof of Lemma \ref{rfwig}.

Finally we prove Lemma \ref{vec}. To this end we use the well known representation of  the vector $e$ as a renormalized (real or complex) Gaussian vector:
$$e=\frac{g}{\|g\|_{2}}$$
where $g = (g_1,...,g_N)$ is a Gaussian vector of covariance matrix $I_N$. By the law of large numbers, we have the following almost sure limit : 

\[ \lim_{N \to \infty} \frac{||g||_{2}}{\sqrt{N}} = 1 \]
We also have by the union bound 

\[ \Pp[ \exists i \in [1,N], |g_i| >  N^{1/4 - \epsilon} /2 ] \leq N \Pp[ |g_1| >  N^{1/4 - \epsilon}/2]\le N\exp\{-\frac{1}{4} N^{1/2-2\epsilon}\} \]
from which the result follows. 

\end{proof}

\subsection{Wishart matrices}
In this subsection we prove Theorem \ref{rftheo} in the case of Wishart matrices, namely:
\begin{lemma}\label{b66}
Let $\beta=w_{1}$ or $w_{2}$.  Suppose Assumption \ref{ass} holds. Then for any $\theta\ge 0$, for $i=1,2$
$$\lim_{N\ra\infty}F_{N}(\theta, w_{i})=F(\theta,w_i)=\sup_{x\in [0,1]}\{ \frac{2\theta^{2}}{i} x(1-x)+\frac{i}{2(1+\alpha)}\ln(x)+\frac{i\alpha}{2(1+\alpha)}\ln (1-x)\}-i C_{\alpha}\,,$$
where $C_{\alpha}=\frac{1}{2 (1+\alpha)}
\ln (\frac{1}{1+\alpha})+\frac{\alpha}{2(1+\alpha)} \ln \frac{\alpha}{1+\alpha}$. Moreover, the supremum is achieved at a unique $x_{\theta,\alpha}$ in $[0,1]$ (as it maximizes a strictly concave function). $x_{\theta,\alpha}$ is the almost sure limit of $\|e_{1}\|_{2}^{2}$, the norm of the first $L$ entries of $e$,   under the tilted law
$$d\Pp^{\theta}(e)=\frac{\E_{X}[\exp\{\theta N\langle e, X^{N}_{\beta} e\rangle\} ]d\Pp(e)}{\E_{e}[\E_{X}[\exp\{\theta N\langle e, X^{N}_{\beta} e\rangle\} ]]}\,.$$

\end{lemma}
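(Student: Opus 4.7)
The plan is to mimic the Wigner argument of Lemma \ref{rfwig} while exploiting the block structure of $X_N^{(w_i)}$. For a unit vector $e$ on $\mathbb S^{N-1}$ (in $\R^N$ when $i=1$, $\C^N$ when $i=2$), split $e = (e_1, e_2)$ with $e_1$ of length $L$ and $e_2$ of length $M$. Then
$N\theta\langle e, X_N^{(w_i)} e\rangle = 2\theta\sqrt{N}\,\Re\langle e_1, G_{L,M}^{(i)} e_2\rangle$,
so the expectation over $G$ factorizes over its $LM$ independent entries. Applying the sharp sub-Gaussian Laplace bound to each entry with argument $z_{p,q}=2\theta\sqrt{N}\,e_1(p)\overline{e_2(q)}$ yields
$$\E_G[\exp(N\theta\langle e, X_N^{(w_i)} e\rangle)] \leq \exp\!\Bigl(\frac{2\theta^2}{i}\,N\,x(1-x)\Bigr),\qquad x := \|e_1\|_2^2 \in [0,1].$$
For the matching lower bound with a factor $(1-\delta)$, I would follow Proposition \ref{tyui} and restrict $e$ to the delocalized set $V_N^\epsilon$; there all $|z_{p,q}|$ are of order $N^{-\epsilon}$ uniformly, so the uniform lower bound on the Laplace transform from Assumption \ref{A0} or \ref{A0c} applies entrywise.

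Next I would integrate over $e$ using the classical fact that, writing $e = g/\|g\|_2$ for a standard Gaussian $g$, the quantity $x = \|e_1\|_2^2$ is distributed as $\mathrm{Beta}(iL/2, iM/2)$ with density
$\rho_N(x) = C_{L,M}\, x^{iL/2 - 1}(1-x)^{iM/2 - 1}$
(the extra factor $i$ in the shape parameters reflects that in the complex case $\|e_1\|_2^2$ aggregates $2L$ real Gaussian components). Stirling's formula, together with the hypothesis $M/L \to \alpha$, gives
$$\frac{1}{N}\ln C_{L,M} \longrightarrow -i\, C_\alpha,\qquad \frac{1}{N}\ln\!\bigl(x^{iL/2}(1-x)^{iM/2}\bigr) \longrightarrow \frac{i}{2(1+\alpha)}\ln x + \frac{i\alpha}{2(1+\alpha)}\ln(1-x),$$
uniformly on compact subsets of $(0,1)$. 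Laplace's method applied to the resulting one-dimensional integral
$\,F_N(\theta, w_i) = \tfrac{1}{N}\ln \int_0^1 \E_G[\cdots]\,\rho_N(x)\,dx\,$
then identifies its limit with the supremum claimed in the statement.

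The uniqueness of the maximizer $x_{\theta,\alpha}$ is immediate: the functional
$f_\theta(x) := \frac{2\theta^2}{i}x(1-x) + \frac{i}{2(1+\alpha)}\ln x + \frac{i\alpha}{2(1+\alpha)}\ln(1-x)$
is strictly concave on $(0,1)$ (the polynomial piece is concave and the logarithmic pieces strictly so) and diverges to $-\infty$ at the boundary. The almost sure convergence of $\|e_1\|_2^2$ to $x_{\theta,\alpha}$ under the tilted law $d\Pp^\theta$ then follows from a standard Laplace-type argument giving exponential concentration of the integral near its unique maximizer, analogous to the proof of concentration of a Gibbs measure at the minimizer of its rate function.

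The main obstacle is the careful bookkeeping of the index $i \in \{1,2\}$: it enters through the constant in the sub-Gaussian Laplace bound (which produces $|z|^2/(2i)$), through the shape parameter $iL/2$ of the Beta distribution, and through the Stirling asymptotics of its normalization $-iC_\alpha$. These three occurrences must align to produce the prefactors $2\theta^2/i$, $i/(2(1+\alpha))$, and $iC_\alpha$ in the final formula, which is consistent precisely because the complex case has twice as many real degrees of freedom in both the entries of $G$ and the components of $e$, so every $i$ enters in a coherent way.
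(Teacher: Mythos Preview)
Your proposal is correct and follows essentially the same approach as the paper: factorize over the independent entries of $G$ via the sharp sub-Gaussian bounds to reduce everything to a functional of $x=\|e_1\|_2^2$, identify its $\mathrm{Beta}(iL/2,iM/2)$ law, and conclude by Laplace's method, with the tilted-law concentration of $\|e_1\|_2^2$ coming from the resulting large-deviation upper bound at the unique maximizer. The only step the paper makes explicit that you leave implicit is how to remove the indicator $\mathds{1}_{V_N^\epsilon}$ in the lower bound: one conditions on $\|e_1\|_2$, observes that conditionally $e_1/\|e_1\|_2$ and $e_2/\|e_2\|_2$ are independent and uniform on spheres of dimensions $L$ and $M$, and applies Lemma~\ref{vec} to each so that $\Pp_e[V_N^\epsilon\mid \|e_1\|_2]\to 1$.
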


\begin{proof}
We have, with the same notations than in the previous case :

\begin{eqnarray*}
 \E_{X_{N}^{w_{i}}} [ \exp(  N\theta  \langle X_N^{w_{i}} e,e \rangle ) ] 
&=& \exp \left\{ \sum_{ 1 \leq i \leq M, 1 \leq j \leq L  } L_{\mu^{N}_{i,j}}( \sqrt{N} 2 \theta e^{(1)}_i \bar e^{(2)}_j) \right\}\\
\end{eqnarray*}
where $e=(e^{(1)},e^{(2)})$, that is $e^{(1)}$ is the vector made of  the $L$ first entries of $e$ and $e^{(2)}$ the vector made of  the $M$ last entries of $e$.
Using that 
the $\mu^{N}_{i,j}$ have a sharp sub-Gaussian Laplace transform and a uniform lower bounded Laplace transform, we deduce that with $V_{N}^{\epsilon}=\{ e\in\mathbb S^{N-1}\,:\, |e_{i}|\le N^{-1/4-\epsilon}\}$ we find that for any $\delta>0$ and
$N$ large enough
\begin{equation}\label{b5}
\E_{e}[\mathds{1}_{V_{N}^{\epsilon}} \exp\{(1-\delta) \frac{2\theta^{2}}{ i}N\|e^{(1)}\|_{2}^{2}\|e^{(2)}\|^{2}_{2}\}]
\le \E_{X_{N}^{w_{i}}}[I_N( \theta,w_{i}) ]\le \E_{e}[ \exp\{ \frac{2\theta^{2}}{ i}N\|e^{(1)}\|_{2}^{2}\|e^{(2)}\|^{2}_{2}\}]
\end{equation}
where  $\|e^{(1)}\|_{2}^{2}=1-\|e^{(2)}\|_{2}^{2}$  follows a Beta law with parameters $(i L/2,i M/2)$, so its distribution is given by
$$ {\rm Beta}_{iM/2,iL/2}(dx)=C_{M,L} x^{i L/2}(1-x)^{i M/2}\mathds{1}_{x\in [0,1]} dx\,,$$
with $C_{M,L}=\Gamma(iN/2)/\Gamma(iM/2)\Gamma(iL/2)$.
Therefore, Laplace method implies that
\begin{equation}\label{b6}
\lim_{N\ra\infty}\frac{1}{N}\ln  \E_{e}[ \exp\{ \frac{2\theta^{2}}{ i}N\|e^{(1)}\|_{2}^{2}\|e^{(2)}\|_{2}\}]\end{equation}
$$\qquad\qquad
=\sup_{x\in [0,1]}\{ \frac{2\theta^{2}}{i} x(1-x)+\frac{i\alpha}{2(1+\alpha)}\ln(1-x)+\frac{i}{2(1+\alpha)}\ln (x)\}-i C_{\alpha}\,.$$
\eqref{b6} thus yields the expected upper bound. To get the lower bound in \eqref{b5}, observe that conditioning by $\|e^{(1)}\|_2$,  the entries of $e^{(1)}$ and $e^{(2)}$ follow uniform laws on the sphere so that Lemma \ref{vec} applies.
Hence, $V_{N}^{\epsilon}$ has probability going to one under this conditionnal measure and we can remove its indicator function in the lower bound of \eqref{b5}. We then apply Laplace method under the Beta law to conclude. Finally,
we see from the above that for any set $A$, any $\delta>0$
$$\Pp^{\theta}(\|e^{(1)}\|_{2}^{2}  \in A)\le \exp\{- N F(\theta, w_{i})+N\delta\}  \int_{A} x^{i L/2}(1-x)^{i M/2}\exp\{  \frac{2\theta^{2}}{ i} Nx(1-x)\} dx$$
from which it follows by Laplace method that the law of $\|e^{(1)}\|_{2}^{2}$ satisfies a large deviation upper bound  with speed $N$ and good rate function which is infinite outside $[0,1]$ and otherwise given by
$$-\frac{2\theta^{2}}{i} x(1-x)-\frac{i\alpha}{2(1+\alpha)}\ln(1-x)-\frac{i}{2(1+\alpha)}\ln x+F(\theta, w_{i})\,.$$
In particular $\|e^{(1)}\|_{2}^{2}$ converges almost surely towards the unique minimizer $x_{\theta,\alpha}$ of this strictly convex function (which vanishes there).

\end{proof}

\section{Identification of the rate function}\label{grf}
To complete the proof of the large deviation upper bound of Theorem \ref{theowldub}, we need to identify 
the rate function, that is prove Proposition \ref{propgrf}. This could a priori be done by saying that the rate function corresponds to the one that is well known for the Gaussian case. But for the sake of completness, we verify directly that we have the same result.

\subsection{Wigner matrices}
We first consider the case of Wigner matrices. Recall that we found for $\beta=1,2$
$$I_{\beta}(x)= \max_{\theta > 0} \left(  J( \sigma, \theta , x) -\frac{\theta^{2}}{\beta} \right) $$
where 

$$
J(\mu,\theta, \la):= \theta v(\theta, \mu, \la) -\frac{\beta}{2} \int \log\left(1+\frac{2}{\beta} \theta v(\theta, \mu, \la) - \frac{2}{\beta}\theta y\right) d\mu( y),
$$
with 
$$
 v(\theta, \mu, \la) := \left\{\begin{array}{ll}
                                 R_\mu(\frac{2}{\beta} \theta), & \textrm{if } 0 \le \frac{2 \theta}{\beta} \le H_{\rm max}(\mu, \lambda) := \lim_{z \downarrow \lambda} \int \frac{1}{z-y} d\mu( y),\\
                                 \lambda - \frac{\beta }{2\theta}, & \textrm{if }\frac{2 \theta }{\beta}>  H_{\rm max}(\mu, \la)\,.
                                \end{array}
                                \right.$$
                                
                                When $\mu=\sigma$, $R_{\sigma}(x)=x$ and $G_{\sigma}(\lambda)=\frac{1}{2}(\lambda-\sqrt{\lambda^{2}-4})$. 
                                
   The critical points of $\varphi(\theta,x)=   J( \sigma, \theta , x) -\frac{\theta^{2}}{\beta}          $  for fixed $x$ satisfy
   $$               \frac{2\theta}{\beta}=\partial_{\theta}J( \sigma, \theta , x)\,.$$
   \begin{itemize}
 \item  For $\frac{2\theta}{\beta}\le G_{\sigma}(x)$, $\varphi (\theta)$ vanishes uniformly as $ J( \sigma, \theta , x)=.\frac{\beta}{2}\int_{0}^{\frac{2}{\beta}\theta}R_{\sigma}(u) du=\frac{\theta^{2}}{\beta}$.
 
 \item For $\frac{2\theta}{\beta}> G_{\sigma}(x)$, the maximum is achieved  at a solution of 
 $$\frac{2\theta_{x}}{\beta}= x-\frac{\beta}{2\theta_x}$$
 which gives 
 $$\frac{2\theta_{x}}{\beta}=
  \frac{1}{2}(x+\sqrt{x^{2}-4})=\frac{1}{G_{\sigma}(x)}\,.$$
 \end{itemize}
 Hence,
 $I_{\beta}(x)=\varphi(\theta_{x},x)$. We can compute its derivative and since $\theta_{x}$ is a critical point of $\varphi$, we find
 $$\partial_{x}I_{\beta}(x)=\partial_{x} \varphi(\theta_{x},x)=\theta_{x}-\frac{\beta}{2} G_{\sigma}(x)=\frac{\beta}{2}\sqrt{x^{2}-4}$$
 which proves the claim since $I_{\beta}(2)=0$.

 \subsection{Wishart matrices}

 Let us now consider Wishart matrices and compute
 $$I_{w_\beta}(x)= \max_{\theta > 0} \left(  J( \sigma_w, \theta , x) -F(\theta, w_\beta) \right) \,.$$
 As in the previous proof we try to compute
 $$\partial_x I_{w_\beta}(x)= \theta_x - \frac{\beta}{2} G_{\sigma_w}(x)$$
 where $\theta_x$ is the argmax of $\varphi(\theta,x)=J( \sigma_w, \theta , x) -F(\theta, w_\beta)$. Note that the latter exists as $\varphi$ is continuous in $\theta$, going to $-\infty$ at infinity. 
 To identify $\theta_x$ we remark that when it is larger than $\frac{\beta}{2}G_{\sigma_w}(x)$, it must satisfy, as a critical point of $\varphi$, 
 $$x=\partial_\theta F(\theta,w_\beta)+\frac{\beta}{2\theta}=:K(\theta)\,.$$
 Our goal is therefore  to identify $K$ and in fact  its inverse. 
 Now, we claim that $\theta\ra F(\theta,w_{\beta})$ is analytic in a neighborhood of $\mathbb R^{+*}$. We recall that it is given in terms of $x_{\theta,\alpha}$, see Lemma \ref{b66}. $x_{\theta,\alpha}$ is a maximizer, and therefore as a critical point it 
  is solution of
 $$\psi(x,\theta)=\frac{1}{\beta^2}\theta^{2}(1-2x) +\frac{1}{(1+\alpha)x}-\frac{\alpha}{(1+\alpha)(1-x)}=0\,.$$
 Clearly $x\ra \psi(x,\theta)$ takes its zeroes away from $0,1$ and  is analytic in a complex neighborhood of $[\epsilon,1-\epsilon]$ for any $\epsilon>0$. Moreover, at $\theta=\infty$, $\psi$ vanishes at $x=1/2$ only. But for $\Re(\theta)>\delta$,  the real part of $-\partial_{x}\psi(\theta,x)$ is bounded below uniformly by some $c(\epsilon)>0$  uniformly  a complex neighborhood $U_{\epsilon}$ of $[\epsilon,1-\epsilon]$ provided the imaginary part of $\theta$ is smaller than some $\kappa_{\epsilon,\delta}>0$. Hence, the implicit function theorem implies that $\theta\ra x_{\theta,\alpha}$, and so $F(.,w_{\beta} )$, is analytic in a complex neighborhood of 
 $\Re(\theta)\ge \delta$. We next show that  for $\theta$ small enough,
 \begin{equation}\label{lj}F(\theta,w_{\beta})=
 \frac{\beta}{2}\int_{0}^{\frac{2}{\beta}\theta}R_{\sigma_{w}}(u) du\,.\end{equation}
 It is clearly lower bounded by this value as for any $M$
 $$F(\theta,w_\beta)\ge\liminf_{N\ra\infty}\frac{1}{N}\ln \mathbb E_{X_N^{(w_\beta)}}[1_{|\lambda_{\rm max}(X_N^{(w_\beta)})|\le M}
 I_{N}(X_N^{(w_\beta)},\theta)]$$
 so that for $\frac{2\theta}{\beta}\le G_{\sigma_{w}}(M)$,\cite[Theorem 1.6]{GuMa05} gives the lower bound. The upper bound is obtained similarly by using the exponential tightness which permits to restrict oneself to $\{|\lambda_{\rm max}|\le M\}$.
 Therefore, we conclude that $K$
  is analytic in $\Re(\theta)>\delta$ and equals $K_{\sigma_w}(\frac{2\theta}{\beta})$ for small $\theta$.  We want to find the inverse of $K$. We thus  look   for an analytic extension of $K_{\sigma_w}$. But in fact $K_{\sigma_w}$ satisfies an algebraic equation. Indeed, 
 observe that $$G_{\sigma_{w}}(x)=2x G_{\pi_{\alpha}}((1+\alpha)x^{2})+\frac{\alpha-1}{(1+\alpha)x}$$
 where it is well known that $G_{\pi_{\alpha}}$, the Stieltjes transform of the Wishart matries,  is solution of 
 $$(2z)^{2}G_{\pi_{\alpha}}(z)^{2}-4z(z+1-\alpha)G_{\pi_{\alpha}}(z)+4z -8\alpha=0\,.$$
 We deduce that at least  for small $x$, $K_{\sigma_w}$ is solution of 
 $$((1+\alpha)K_{\sigma_w} (x) x+1-\alpha)^{2}-2(K_{\sigma_w}(x)+1-\alpha)((1+\alpha)xK_{\sigma_w}(x)+1-\alpha)
 +4(1+\alpha)K_{\sigma_w} (x)^{2} -8\alpha=0\,.$$
 As a consequence, $K$ is also solution of this equation for all $x$, by analyticity. Now, we are looking for the inverse of $K$ and so we deduce that
  $\theta_{x}$ is solution of the equation
 $$(\frac{2}{\beta}(1+\alpha)x\theta_{x}+1-\alpha)^{2}-2(x+1-\alpha)(\frac{2}{\beta}(1+\alpha)x\theta_{x}+1-\alpha)
 +4(1+\alpha)x^{2} -8\alpha=0\,.$$
 For $\frac{2\theta_{x}}{\beta}\le G_{\sigma_{w}}(x)$, the solution is 
$$\frac{2}{\beta}\theta_{x} =\frac{2\alpha}{1+\alpha}\frac{x^{2}+1-\alpha-\sqrt{(x^{2}-1-\alpha)^{2}-4\alpha}}{2x^{2}}
+\frac{1-\alpha}{1+\alpha} \frac{1}{x} =G_{\sigma_{w}}(x)\,.$$
but when $\frac{2\theta_{x}}{\beta}> G_{\sigma_{w}}(x)$
we have to take the other solution of the quadratic equation
$$\frac{2}{\beta}\theta_{x}=\frac{2\alpha}{1+\alpha}\frac{x^{2}+1-\alpha+\sqrt{(x^{2}-1-\alpha)^{2}-4\alpha}}{2x^{2}}
+\frac{1-\alpha}{1+\alpha} \frac{1}{x} \,.
$$
As a result, we then have
$$\partial_x I_{w_\beta}(x)=\theta_{x}-\frac{\beta}{2} G_{\sigma_{w}}(x)=\frac{\beta\alpha}{1+\alpha}\frac{\sqrt{(x^{2}-1-\alpha)^{2}-4\alpha}}{x^{2}} \,,$$
which completes the proof.

\section{ Large deviation lower bounds}
Recall that we need  to prove Lemma  \ref{cruc}, that is  find for 
any $x>2$  (or $\tilde b_{\alpha}$ for Wishart matrices) a  $\theta=\theta_x\ge 0$ such that for any $\delta>0$
and $M$ large enough,

$$\liminf_{N\ra\infty} \frac{1}{N} \ln \frac{   \E[ \mathds{1}_{ X_N^{(\beta)}\in \mathcal A_{x,\delta}^{M}}
I_N(X_N^{(\beta)},\theta)]}{ \E[ I_N(X_N^{(\beta)},\theta)]}\ge 0\,,$$
where we recall that 
$$\mathcal A_{x,\delta}^{M}=\lbrace X: \left|\lambda_{\rm max}(X)-x\right|<\delta\rbrace\cap \{ d(\hat\mu_X^{N}, \sigma_\beta)< N^{-\kappa'}\} \cap \{ \|X\|\le M\}\,. $$

For a  vector   $e$ of the sphere $\mathbb S^{N-1}$ and $X$ a random symmetric matrix, we denote by $\Pp^{(e,\theta)}_N$ the probability measure defined by : 

\[ d \Pp_N^{(e,\theta)}(X) = \frac{ \exp( N \theta \langle X  e,e \rangle) }{ \E_X [\exp( N \theta \langle Xe,e \rangle)]} d \Pp_N(X)\]
where  $\Pp_N$ is the law of $X_{N}^{(\beta)}$. We have
\begin{eqnarray}
\E[ \mathds{1}_{ X_N^{(\beta)}\in \mathcal A_{x,\delta}^{M}}
I_N(X_N^{(\beta)},\theta)]&=&\E_{e}[ \Pp_{N}^{(e,\theta)}( \mathcal A_{x,\delta}^{M}) \E_X [\exp( N \theta \langle Xe,e \rangle)]]\nonumber \\
&\ge& \E_{e}[\mathds \mathds{1}_{ e\in V_N^\epsilon}  \Pp_{N}^{(e,\theta)}( \mathcal A_{x,\delta}^{M}) \E_X [\exp( N \theta \langle Xe,e \rangle)]]\label{tyu}\end{eqnarray}
where we recall that $V_{N}^{\epsilon}=\{e\in \mathbb S^{N-1}:|e_{i}|\le N^{-1/4-\epsilon}\}$. The main point to prove the lower bound will be to show that $\Pp_{N}^{(e,\theta)}( \mathcal A_{x,\delta}^{M}) $ is close to
one for delocalized vectors $e\in V_N^\epsilon$ and then proceed as before to show that $V_N^\epsilon$ has probability close to one under the tilted measure. 
More precisely, we will show that  for $\epsilon \in (\frac{1}{8},\frac{1}{4})$, we can find $\theta$ so that for any $x>2$ (resp $x>\tilde b_{\alpha}$) and $\delta>0$ we can find $\theta_x\ge 0$ so that for $M$ large enough,
\begin{equation}\label{lbw} \lim_{N \to \infty} \inf_{e \in V^\epsilon_N}   \Pp_{N}^{(e,\theta_{x})}( \mathcal A_{x,\delta}^{M})= 1 \,.\end{equation}
This gives the desired estimate  since
we then deduce from \eqref{tyu} that for $N$ large enough so that the above is greater than $1/2$
\begin{eqnarray*}
\E[ \mathds{1}_{ X_N^{(\beta)}\in \mathcal A_{x,\delta}^{M}}
I_N(X_N^{(\beta)},\theta)]
&\ge& \frac{1}{2} \E_{e}[\mathds \mathds{1}_{ e\in V_N^\epsilon}  \E_{X_N^{(\beta)}} [\exp( N \theta \langle X_N^{(\beta)} e,e \rangle)]]\\
\end{eqnarray*}
so that the desired estimate follows from  Proposition \ref{tyui}.  
To prove \eqref{lbw}, the first point is to show that
\begin{lemma}\label{fac} Take $\epsilon\in (0,\frac{1}{4})$. There exists $\kappa >0$ , for $\epsilon>0$, for any $\theta$,
 \begin{itemize}
\item for $K$ large enough:
$$
\lim_{N \to \infty} \sup_{e \in V^\epsilon_N} \Pp^{(e,\theta)}_N\left( \lambda_{\rm max}(X_N^{(\beta)}) \ge K \right)= 0$$
\item

\[ \limsup_{N \to \infty} \sup_{e\in  V^\epsilon_N} \Pp^{(e,\theta)}_N\left( d( \hat \mu^{N}_{X_N^{(\beta)}}, \sigma_\beta) > N^{-\kappa'} \right) = 0 \,.\]

\end{itemize}
\end{lemma}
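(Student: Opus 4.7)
The proof of Lemma \ref{fac} proceeds by adapting the arguments from Section \ref{exptightsec} and Lemma \ref{convmun} to the tilted measure $\Pp_N^{(e,\theta)}$. The key structural observation is that for $e\in V_N^\epsilon$, the tilt applied to each individual entry $a_{i,j}^{(\beta)}$ is by an exponential with argument of order $\sqrt{N}|e_ie_j|=O(N^{-2\epsilon})$. This smallness lets us both keep the sharp sub-Gaussian upper bound of Assumption \ref{A0}/\ref{A0c} intact (it is a global property of the original laws) and apply the uniform lower bound on the Laplace transform, which only holds in a neighborhood of the origin.

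For the first bullet, we mimic the net argument of Lemma \ref{exptight}. Choose a $1/2$-net $R_N$ of the sphere of cardinality at most $9^{\beta N/2}$ and reduce via \eqref{b2}--\eqref{b3} to bounding $\sup_{u,v\in R_N}\Pp_N^{(e,\theta)}(\langle X_N^{(\beta)}u,v\rangle\ge K)$. By Chebyshev at parameter $\theta'>0$ and the definition of the tilted measure,
\[
\Pp_N^{(e,\theta)}(\langle X_N^{(\beta)}u,v\rangle\ge K)\le e^{-N\theta' K}\,\frac{\E[\exp(N\theta\langle X_N^{(\beta)}e,e\rangle+N\theta'\langle X_N^{(\beta)}u,v\rangle)]}{\E[\exp(N\theta\langle X_N^{(\beta)}e,e\rangle)]}.
\]
By independence of the entries and the sharp sub-Gaussian hypothesis applied factor by factor (the combined tilt of entry $(i,j)$ is linear in the quantity $2\theta\bar e_i e_j+\theta'(\bar u_iv_j+\bar u_jv_i)$), the numerator is at most $\exp(N\Psi(\theta,\theta')+o(N))$ for an explicit quadratic form $\Psi$ that is uniform in unit vectors. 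The denominator is lower bounded uniformly in $e\in V_N^\epsilon$ by the computation already carried out in Lemma \ref{rfwig} (respectively Lemma \ref{b66}) because the arguments $\sqrt{N}\cdot 2\theta\bar e_ie_j$ are $O(N^{-2\epsilon})\to 0$ and hence the uniform lower bound on the Laplace transform of Assumption \ref{A0}/\ref{A0c} applies. Multiplying by the net cardinality $9^{\beta N}$ and taking $\theta'=1$, one gets $\Pp_N^{(e,\theta)}(\|X_N^{(\beta)}\|\ge 4K)\le\exp(N(\beta\log 9+C(\theta,\beta)-K)+o(N))$, which tends to zero as $K\to\infty$, uniformly in $e\in V_N^\epsilon$.

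For the second bullet, the cleanest route is H\"older's inequality. For conjugate exponents $p,p'>1$ and $A=\{d(\hat\mu^N_{X_N^{(\beta)}},\sigma_\beta)>N^{-\kappa'}\}$,
\[
\Pp_N^{(e,\theta)}(A)\le \Pp(A)^{1/p}\,\frac{\E[\exp(p'N\theta\langle X_N^{(\beta)}e,e\rangle)]^{1/p'}}{\E[\exp(N\theta\langle X_N^{(\beta)}e,e\rangle)]}.
\]
The sharp sub-Gaussian upper bound \eqref{ub} (with $p'\theta$ in place of $\theta$) controls the numerator uniformly in $e$, while the lower bound just discussed controls the denominator uniformly in $e\in V_N^\epsilon$; the combined multiplicative cost is $\exp(O(N))$ with a constant independent of $e$. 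Lemma \ref{convmun} provides $\Pp(A)\le e^{-NC}$ for arbitrarily large $C$ when $N$ is large enough, so choosing $C$ larger than this constant (and $p$ close to $1$) forces $\Pp_N^{(e,\theta)}(A)\to 0$ uniformly in $e\in V_N^\epsilon$.

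The main technical obstacle is the uniform lower bound on $\E[\exp(N\theta\langle X_N^{(\beta)}e,e\rangle)]$, which is precisely where the delocalization condition $e\in V_N^\epsilon$ is used to place each individual entrywise tilt inside the small-argument window of the sharp sub-Gaussian lower bound. Minor bookkeeping is needed to handle separately the diagonal entries (which have variance $2$ and only the weaker upper bound mentioned after Assumption \ref{A0}), the complex case $\beta=2$, and the Wishart case $\beta=w_1,w_2$, where the analogue of the $1/\beta$ factor is replaced by the quantity $2\theta^2x(1-x)/i$ optimized at $x=x_{\theta,\alpha}\in(0,1)$ as in Lemma \ref{b66}; in each case the structure of the argument is identical.
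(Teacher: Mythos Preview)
Your proof is correct, but it takes a more elaborate route than the paper's. The paper observes that by Jensen's inequality and centering of the entries,
\[
\E_X\bigl[\exp(N\theta\langle X_N^{(\beta)}e,e\rangle)\bigr]\ \ge\ \exp\bigl\{N\theta\,\E_X[\langle X_N^{(\beta)}e,e\rangle]\bigr\}\ =\ 1
\]
for \emph{every} unit vector $e$, not only delocalized ones. This makes the denominator trivial and removes any need to invoke the uniform lower bound on the Laplace transform at this stage. For the first bullet the paper then applies Chebyshev and the sharp sub-Gaussian upper bound to the joint exponential $\E_X[\exp(N\theta\langle Xe,e\rangle+N\langle Xu,v\rangle)]$ exactly as you do. For the second bullet, instead of H\"older, the paper bounds the Radon--Nikodym density pointwise on the event $\{\|X_N^{(\beta)}\|\le K\}$ by $e^{\theta KN}$ (numerator $\le e^{\theta KN}$, denominator $\ge 1$), splits
\[
\Pp_N^{(e,\theta)}(A)\ \le\ \Pp_N^{(e,\theta)}(\|X\|>K)+e^{\theta KN}\,\Pp_N(A),
\]
and invokes the first bullet together with the super-exponential decay from Lemma \ref{convmun}.

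Your approach has the virtue of reusing the machinery already built in Lemmas \ref{rfwig} and \ref{b66}, and the H\"older step is a standard and robust way to push super-exponential decay through an exponential tilt. The paper's approach is shorter and, as a byproduct, shows that the first bullet actually holds uniformly over all $e\in\mathbb S^{N-1}$; the restriction to $V_N^\epsilon$ is only genuinely needed later in Lemma \ref{dif}. Incidentally, in your H\"older argument there is no need to send $p\to 1$: for any fixed $p>1$ the factor $\Pp(A)^{1/p}$ still decays faster than any exponential, which already beats the $e^{O(N)}$ cost.
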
 
\begin{proof}
We hereafter fix a vector $e$ on the sphere. The proof of the exponential tightness is exactly the same as for Lemma \ref{exptight}. Indeed, by Jensen's inequality, we have 
$$ \E_X [\exp( N \theta \langle X_N^{(\beta)} e,e \rangle)]\ge\exp\{ N\theta \E_{X}[ \langle X_N^{(\beta)} e,e \rangle]\}=1$$
Moreover, by Tchebychev's inequality,
 for any $u,v,e\in \mathbb S^{N-1}$, we have
\begin{eqnarray*}
\int \mathds{1}_{\langle X_N^{(\beta)} u,v\rangle\ge K}   \exp( N \theta \langle X_N^{(\beta)}  e,e \rangle) d \Pp_N&\le &
 \exp\{ -N K \}\E_{X}[ \exp( N \theta \langle X_N^{(\beta)}  e,e \rangle+N \langle X_N^{(\beta)} u,v\rangle )]\\
 &\le& \exp\{ -N K \}\exp\{ N\theta^{2}\sum_{i,j}|e_{i}\bar e_{j}+u_{i}\bar v_{j}|^{2}\}\\
 &\le& \exp\{-NK+4\theta^{2}N\}
 \end{eqnarray*}
from which we deduce after taking $u,v$ on a $\delta$-net as in Lemma \ref{exptight} that
$$ \Pp^{(e,\theta)}_N\left( \lambda_{\rm max}(X_N^{(\beta)}) \ge K\right)\le  9^{\beta N} \exp\{ -\frac{1}{4}N K + 4\theta^{2}N \}\,$$
which proves the first point. The second is a direct consequence of Lemma \ref{convmun} and the fact that the log density of $\Pp^{(e,\theta)}_N$ with respect to
$\Pp_N$ is bounded by $\theta N(|\lambda_{\rm max}(X)|+|\lambda_{min}(E)|)$ which is bounded by $\theta KN$ with overwhelming probability by the previous point (recall that $\lambda_{\rm min}(X)$ satisfies the same bounds than $\lambda_{\rm max}(X)$).

\end{proof}

Hence, the main point of the proof is to show that
\begin{lemma}\label{dif} Pick $\epsilon\in ]\frac{1}{8},\frac{1}{4}[$. For any $x>2$ if $\beta=1,2$ and $x>{\tilde b_\alpha}$ if $\beta=w_1,w_2$, there exists $\theta_x$ such that 
for every $\eta >0$, 
\[ \lim_{N \to \infty} \sup_{e \in V^\epsilon_N} \Pp^{(e,\theta_x)}_N[ |\lambda_{\rm max} - x| \geq \eta ] = 0 \]
\end{lemma}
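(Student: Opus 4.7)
The plan is to show that, under the tilted measure $\Pp_N^{(e,\theta)}$ and for $e\in V_N^\epsilon$, the matrix $X_N^{(\beta)}$ is a small random perturbation of a rank-one (rank-two, in the block Wishart picture) deformation of a Wigner-type matrix, and then to apply a BBP-style outlier analysis in order to locate $\lambda_{\rm max}$ precisely.

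\emph{Step 1: Tilted law of the entries.} Since $N\theta\langle X_N^{(\beta)}e,e\rangle$ decouples as a sum of independent contributions from the entries, the $a_{i,j}^{(\beta)}$ remain independent under $\Pp_N^{(e,\theta)}$, each following the law $\mu_{i,j}^N$ exponentially tilted with parameter $t_{i,j}^N$ of size $2\sqrt N\,\theta\,\bar e_i e_j$ (with the obvious modification on the diagonal and in the Wishart block). On $V_N^\epsilon$ one has $|t_{i,j}^N|\le 2\theta N^{-2\epsilon}$. The uniform $C^3$ control on $\ln T_{\mu_{i,j}^N}$ in Assumption \ref{A0}/\ref{A0c}, combined with sharp sub-Gaussianity, yields
\[
\ln T_{\mu_{i,j}^N}(t)=\tfrac{1}{2}\sigma_{i,j}^2\,t^2+O(|t|^3)
\]
uniformly in $i,j,N$, where $\sigma_{i,j}^2=\mu_{i,j}^N(x^2)$. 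Consequently, uniformly in $e\in V_N^\epsilon$, under $\Pp_N^{(e,\theta)}$ each $a_{i,j}^{(\beta)}$ has mean $\sigma_{i,j}^2\,t_{i,j}^N+O(N^{-4\epsilon})$ and variance $\sigma_{i,j}^2(1+o(1))$.

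\emph{Step 2: Rank-one decomposition.} Dividing by $\sqrt N$ and reassembling, this produces the decomposition
\[
X_N^{(\beta)}=2\theta\,ee^{*}+Y_N+R_N,
\]
where $Y_N$ has independent centered entries with variance profile matching the original Wigner (resp.\ Wishart) model to leading order, and where the deterministic remainder $R_N$ has operator norm $O(\theta^2\sqrt N\,\|e\|_4^4)=O(\theta^2 N^{1/2-4\epsilon})$, which tends to $0$ as soon as $\epsilon>1/8$. By Lemma \ref{convmun} applied to $Y_N$ (whose entries still satisfy its hypotheses after this negligible distortion) and by Lemma \ref{fac}, $\hat\mu^N_{Y_N}\to\sigma_\beta$ and $\|Y_N\|$ stays bounded, with $\Pp_N^{(e,\theta)}$-probability tending to one uniformly in $e\in V_N^\epsilon$.

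\emph{Step 3: BBP transition and choice of $\theta_x$.} Any outlier $\lambda$ of $Y_N+2\theta\,ee^{*}$ lying above the bulk must satisfy the secular equation
\[
\bigl\langle e,\,(\lambda-Y_N)^{-1}e\bigr\rangle=\tfrac{1}{2\theta}.
\]
Delocalization $|e_i|\le N^{-1/4-\epsilon}$ together with resolvent concentration for Wigner-type matrices gives
\[
\bigl\langle e,\,(\lambda-Y_N)^{-1}e\bigr\rangle\ \xrightarrow[N\to\infty]{}\ G_{\sigma_\beta}(\lambda)
\]
uniformly on compact subsets of $(\,\max\mathrm{supp}\,\sigma_\beta,\infty)$. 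Choosing $\theta_x$ to be the solution of $G_{\sigma_\beta}(x)=1/(2\theta_x)$, which is precisely the critical point identified in Section \ref{grf}, the implicit function theorem and a standard continuity argument yield $\lambda_{\rm max}(X_N^{(\beta)})\to x$ in $\Pp_N^{(e,\theta_x)}$-probability, uniformly in $e\in V_N^\epsilon$. The Wishart case is handled by exactly the same scheme, the rank-two block structure of the spike and the Stieltjes transform $G_{\sigma_w}$ replacing $G_{\sigma_\beta}$.

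\emph{Main obstacle.} The delicate ingredient is the \emph{uniform-in-$e$} concentration of the quadratic form $\langle e,(\lambda-Y_N)^{-1}e\rangle$ around $G_{\sigma_\beta}(\lambda)$, since under the tilt the entries of $Y_N$ have slightly $e$-dependent laws rather than being i.i.d.. I expect to establish this either via an isotropic local-law style argument or, more classically, by expanding $(\lambda-Y_N)^{-1}$ as a power series in $Y_N/\lambda$ for $\lambda$ strictly outside the limiting bulk and controlling each moment $\langle e,Y_N^{k}e\rangle$ through combinatorial trace estimates, exploiting the delocalization $|e_i|\le N^{-1/4-\epsilon}$ and the uniform sub-Gaussian tails guaranteed by Lemmas \ref{convmun} and \ref{fac}.
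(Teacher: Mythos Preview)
Your Wigner argument is essentially the paper's: split the tilted matrix into its mean (a rank-one spike) plus a centered Wigner-type piece, use $\epsilon>1/8$ to kill the $O(\sqrt N\,\|e\|_4^4)$ remainder, and locate the outlier by the secular equation combined with isotropic resolvent concentration. Two small corrections are in order. First, the spike is $\tfrac{2\theta}{\beta}\,ee^*$, not $2\theta\,ee^*$: the tilt mean is $L_{\mu_{i,j}^N}'(t)\simeq t/\beta$ because the off-diagonal variance is $1/\beta$ in the complex normalization, so the secular equation reads $G_\sigma(x)=\beta/(2\theta_x)$. Second, the paper does not feed your $Y_N$ (with variance only approximately $1/N$) directly into a local law; it first renormalizes entry-by-entry to exact variance $1/N$ via $\widetilde X^{(e),N}_{i,j}=\overline X^{(e),N}_{i,j}\big/\sqrt{\partial_z\partial_{\bar z}L_{\mu_{i,j}^N}(t_{i,j}^N)}$ and then shows separately that $\overline X^{(e),N}-\widetilde X^{(e),N}$ has vanishing operator norm. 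Your ``main obstacle'' paragraph anticipates exactly the right issue, and the paper resolves it by citing an isotropic local law.

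For Wishart your one-line reduction is too quick and in fact wrong as stated. The spike is the rank-two block matrix $\tfrac{2\theta}{i}\begin{pmatrix}0&e^{(1)}(e^{(2)})^*\\ e^{(2)}(e^{(1)})^*&0\end{pmatrix}$, and the outlier equation is a $2\times2$ determinant that does \emph{not} collapse to $G_{\sigma_w}(x)=1/(2\theta)$: it reads
\[
\bigl|1-\tfrac{2\theta}{i}\langle e^{(2)},R_{2,1}(z)e^{(1)}\rangle\bigr|^2
-\tfrac{4\theta^2}{i^2}\,\langle e^{(2)},R_{2,2}(z)e^{(2)}\rangle\,\langle e^{(1)},R_{1,1}(z)e^{(1)}\rangle=0,
\]
which in the limit becomes an equation in $G_{MP(\alpha)}$ and $G_{MP(1/\alpha)}$ weighted by $\|e^{(1)}\|_2^2$ and $\|e^{(2)}\|_2^2$ separately. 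Two genuine extra ingredients are needed: showing the cross term $\langle e^{(2)},R_{2,1}(z)e^{(1)}\rangle\to0$ (the paper does this by a combinatorial moment computation, since $R_{2,1}$ is not a resolvent), and handling the dependence of the limiting outlier on $\|e^{(1)}\|_2^2$, which the paper links to the optimizer $x_{\theta,\alpha}$ from Lemma~\ref{b66}. None of this is captured by ``$G_{\sigma_w}$ replacing $G_{\sigma_\beta}$''.
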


Again, we first consider the simpler Wigner matrix case and then the case of Wishart matrices.

\subsection{Proof of Lemma \ref{dif} for Wigner matrices}

For $e \in V^\epsilon_N$ fixed, let $X^{(e),N}$ be a matrix with  law $\Pp^{(e,\theta)}_N$. 
We have :

\[ X^{(e),N} = \E[ X^{(e),N} ] + (X^{(e),N} - \E[ X^{(e),N} ]) \]
where $\E[X]$ denotes the matrix with entries given by the expectation of the entries of the matrix $X$.
We first show that  $\E[ X^{(e),N} ]$ is approximately a rank one matrix.

\begin{lemma}\label{bv0}
For $\epsilon\in ]\frac{1}{8},\frac{1}{4}[$, there exists $\kappa(\epsilon)>0$ so that for $ e \in V^\epsilon_N$ : 

\[ \E[ X^{(e),N} ] = 2 \theta e \, e^{*} + \Delta^{(e),N} \]
where the spectral radius of $\Delta^{(e),N}$ is bounded by  $N ^{- \kappa(\epsilon)}$ uniformly on $ e \in V^\epsilon_N$. \end{lemma}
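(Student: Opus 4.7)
The starting observation is that the tilting factor $\exp(N\theta \langle X e, e\rangle)$ factorises over the independent entries $(a^{(\beta)}_{i,j})_{i\le j}$ of $X_N^{(\beta)}$, so under $\Pp_N^{(e,\theta)}$ these entries remain independent with marginals equal to exponential tilts of $\mu^N_{i,j}$. Expanding $N\theta \langle Xe, e\rangle$ in the real Wigner case, the coupling parameter acting on $a_{i,j}$ is $2\theta\sqrt{N}\, e_i e_j$ for $i<j$ and $\theta\sqrt{N}\, e_i^2$ for $i=j$. Writing $L_\mu:=\ln T_\mu$, this gives
$$\E\!\left[X^{(e),N}_{i,j}\right]=\frac{1}{\sqrt{N}}\,L'_{\mu^N_{i,j}}\!\bigl(2\theta\sqrt{N}\,e_i e_j\bigr)\quad(i<j),\qquad \E\!\left[X^{(e),N}_{i,i}\right]=\frac{1}{\sqrt{N}}\,L'_{\mu^N_{i,i}}\!\bigl(\theta\sqrt{N}\,e_i^2\bigr).$$
The complex case $\beta=2$ is identical, with $L_{\mu^N_{i,j}}$ understood via the complex Laplace transform of Assumption~\ref{A0c} evaluated at $z_{i,j}=2\theta\sqrt{N}\,e_i\bar e_j$.

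The plan is then to run a uniform second-order Taylor expansion of $L'$ near $0$. Combining the sharp sub-Gaussian upper bound $L_{\mu^N_{i,j}}(t)\le \frac{1}{2}\mu^N_{i,j}(x^2)\,t^2$ with the uniform lower bound in Assumption~\ref{A0} pins the quadratic behaviour at the origin: $L_{\mu^N_{i,j}}(t)=\frac{1}{2}\mu^N_{i,j}(x^2)\,t^2+o(t^2)$, so that $L''_{\mu^N_{i,j}}(0)=\mu^N_{i,j}(x^2)\in\{1,2\}$. The uniform $C^3$ bound of Assumption~\ref{A0} then yields, by Taylor's formula, a constant $C(\theta)$ such that, uniformly in $i,j,N$ and in $|t|\le \epsilon$,
$$L'_{\mu^N_{i,j}}(t)=\mu^N_{i,j}(x^2)\,t+r_{i,j}(t),\qquad |r_{i,j}(t)|\le C(\theta)\,t^2.$$
For $e\in V^{\epsilon}_N$ with $\epsilon>1/8$, the coupling parameters satisfy $|2\theta\sqrt{N}\,e_i e_j|\le 2\theta\,N^{-2\epsilon}\to 0$, so the expansion applies to every entry. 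Using $\mu^N_{i,j}(x^2)=1$ off-diagonal and $=2$ on the diagonal, this yields
$$\E\!\left[X^{(e),N}_{i,j}\right]=2\theta\, e_i e_j+\Delta^{(e),N}_{i,j},\qquad |\Delta^{(e),N}_{i,j}|\le C'\sqrt{N}\,|e_i e_j|^2\quad (i\le j),$$
and by Hermitian symmetry $\E[X^{(e),N}]=2\theta\,e e^{*}+\Delta^{(e),N}$ with a Hermitian error matrix $\Delta^{(e),N}$.

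It remains to control $\|\Delta^{(e),N}\|_{\rm op}$. For $e\in V^{\epsilon}_N$ we have $|e_i e_j|^2\le N^{-1-4\epsilon}$, so each entry of $\Delta^{(e),N}$ is bounded by $C' N^{-1/2-4\epsilon}$. The crude Frobenius estimate gives
$$\|\Delta^{(e),N}\|_{\rm op}^2\le \|\Delta^{(e),N}\|_F^2\le N^2\cdot (C')^2\,N^{-1-8\epsilon}=(C')^2\,N^{1-8\epsilon},$$
hence $\|\Delta^{(e),N}\|_{\rm op}\le C' N^{1/2-4\epsilon}=C' N^{-\kappa(\epsilon)}$ with $\kappa(\epsilon):=4\epsilon-\tfrac12>0$ for $\epsilon\in(1/8,1/4)$, uniformly in $e\in V^\epsilon_N$. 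The main subtlety is securing the \emph{uniform} second-order remainder $r_{i,j}$ in the Taylor expansion of $L'_{\mu^N_{i,j}}$; this is exactly the role of the uniform $C^3$ bound in Assumption~\ref{A0}, without which the $N^2$-term Frobenius sum could not be closed, and the threshold $\epsilon>1/8$ is then the precise condition needed to absorb that $N^2$ factor. The complex Wigner case $\beta=2$ is handled in the same way, with the variances $1/2$ in the real and imaginary parts producing the analogous leading term, and the Wishart cases $\beta=w_1,w_2$ follow by applying the same argument to the non-trivial off-diagonal blocks.
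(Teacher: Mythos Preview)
Your proof is correct and follows essentially the same route as the paper: compute the tilted mean of each entry via a second-order Taylor expansion of $L'_{\mu^N_{i,j}}$ around $0$, using the uniform $C^3$ bound of Assumption~\ref{A0}, and then bound the resulting error matrix $\Delta^{(e),N}$ in operator norm. The only methodological difference is in this last step: the paper uses an entrywise comparison lemma ($|A_{i,j}|\le B_{i,j}\Rightarrow\|A\|\le\|B\|$) to dominate $\Delta^{(e),N}$ by the rank-one matrix $C\sqrt{N}\,|e|^2(|e|^2)^*$, whose operator norm is $C\sqrt{N}\sum_i|e_i|^4$, whereas you use the cruder Frobenius bound; both approaches, after bounding $\sum_i|e_i|^4\le N^{-4\epsilon}$ on $V_N^\epsilon$, yield exactly the same estimate $\|\Delta^{(e),N}\|\le C' N^{1/2-4\epsilon}$ with $\kappa(\epsilon)=4\epsilon-\tfrac12$.
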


\begin{proof}[Proof of the lemma] 

We can express the density of $\Pp_N^{(e,\theta)}$ as the following product : 

\[ \frac{ d \Pp_N^{(e,\theta)} }{d \Pp_{X_N}}(X) = \prod_{i \le  j} \exp( 2^{1_{i\neq j}} \theta \sqrt{N} \Re(e_i \bar{e_j} a_{i,j}) - L_{\mu^N_{i,j}}( 2^{1_{i\neq j}} \theta \sqrt{N} e_i \bar e_j ))\]
where  the $a_{i,j}$ are defined as in the introduction, basically a rescaling of the entries by multiplication by $\sqrt{N}$.

So since we took our $a_{i,j}$ independent (for $i \leq j$), the entries $X^{(e),N}_{i,j}$ remain independent and their mean is given in function of  the Taylor expansion of $L$ as follows :

\[ (\E[ X^{(e),N}) ])_{i,j} = \frac{ L_{\mu^N_{i,j}}'( 2 \sqrt{N} \theta e_i \bar e_j )}{\sqrt{N}} = \frac{2 \theta }{\beta} e_i \bar e_j + \frac{ \delta_{i,j}(2 \sqrt{N} \theta e_i \bar e_j) N \theta^2 |e_i|^2 |e_j|^2}{ \sqrt{N}} \]
if $i \neq j$, and  if $i=j$

\[ (\E[ X^{(e),N} ])_{i,i} = \frac{  L_{\mu_{i,i}^N}'( \sqrt{N} \theta |e_i|^2 )}{\sqrt{N}} =\frac{2 \theta}{\beta} e_i \bar e_i+ \frac{\delta_{i,i}(2 \sqrt{N} \theta |e_i|^2) N \theta^2 |e_i|^4}{ \sqrt{N}} \]
where we used that by centering and variance one,  $L_{\mu^N_{i,j}}'(0) = 0$, $ Hess  L_{\mu^N_{i,j}}(0) = \frac{1}{\beta} Id$ for all $i\neq j,N$,  $L''_{\mu^N_{i,i}}(0)=\frac{2}{\beta}$ for all $i,N$,  and where 
$$ |\delta_{i,j}(t)| \leq  4 \sup_{|u| < t}\max_{i,j,N} \{|L_{\mu^N_{i,j}}^{(3)}(u)|\}\,.$$
Hence, we have
$$ \Delta^{(e),N}_{i,j}={ \delta_{i,j}(2 \sqrt{N} \theta e_i \bar e_j) \sqrt{N} \theta^2 |e_i|^2 |e_j|^2}, 1\le i,j\le N\,.$$
In order to bound the spectral radius of this remainder term, we use the following lemma 
 
\begin{lemma}
Let $A$ be an Hermitian matrix and $B$ a real symmetric matrix  such that : 

\[ \forall i,j, |A_{i,j}| \leq B_{i,j} \]
Then the spectral radius of $A$ is smaller than the spectral radius of $B$.
\end{lemma}
\begin{proof}
Indeed, if we take $u$ on the sphere such that $|| A u ||_2 = ||A||$, then, by denoting $A'$ the matrix $(|A_{i,j}|)$ and $u'$ the vector $(|u_i|)$, we have by the triangular inequality 
\[  ||A|| = ||A u ||_2 \leq || A' u' ||_2 \leq ||B u' ||_2 \leq ||B ||\,. \]
\end{proof}
Therefore, if we choose $C$ so that $C \geq \sup_{N,i,j} \delta_{i,j}(2 \sqrt{N} \theta e_i \bar e_j)  \theta^2$ and set $|e|^2$ to be the vector with entries $(|e_i|^2)_{1\le i\le N}$, we have

\[ ||\Delta^{(e),N} || \leq C \sqrt{N} |||e|^{2}(|e|^{2})^{*}|| \]
Since  $|||e|^{2}(|e|^{2})^{*}|| = |||e|^2||_2^2 = \sum_i e_i^{4} \leq N^{- 4 \epsilon}$, we deduce that
if we take $\epsilon'\in ] 1/8 , 1/4 [$ we have with $\kappa(\epsilon)=1 / 2 - 4 \epsilon$ : 

\[ || \Delta^{(e),N} || =N^{-\kappa(\epsilon)} \,.\]

\end{proof}
\begin{rem} F. Augeri noticed that a maybe more elegant proof of this point would be to use Latala's estimate:
$$\mathbb E[\|Y\|]\le C\sup_{j}\left (\mathbb E \sum_{i}|Y_{i,j}|^{2}\right)^{\frac 1 2}\,.$$
\end{rem}

Now we denote : 

\[ \overline{ X^{(e),N}} := X^{(e),N} - \E[ X^{(e),N} ] \]

The entries of $\overline{ X^{(e),N}}$ are independent, centered of variance $\partial_z\partial_{\bar z} L_{\mu_{i,j}^N}( \theta e_i \bar e_j \sqrt{N}) / N $. Recall that
$\partial_z\partial_{\bar z} L_{\mu_{i,j}^N}( 0 )=1$ and that the third derivative of the Laplace transform of the entries are uniformly bounded 
so that 
\[ \partial_z\partial_{\bar z} L_{\mu_{i,j}^N}( \theta e_i \bar e_j \sqrt{N})  = 1 + \delta_{i,j}( \sqrt{N} |e_ie_j|) = 1+  O(N^{-2\epsilon})\]
uniformly on $V_N^\epsilon$.
We can then consider $\widetilde{ X}^{(e),N}$ defined by :  : 

\[ \widetilde{ X}^{(e),N}_{i,j} = \frac{\overline{ X}^{(e),N}_{i,j}}{ \sqrt{ \partial_z\partial_{\bar z} L_{\mu_{i,j}^N}( \theta e_i \bar e_j \sqrt{N}) }} \]
Set  $Y^{(e), N} = \overline{ X}^{(e),N} - \widetilde{ X}^{(e),N} $. So, we have 

\[ (Y^{(e), N})_{i,j} = \overline{ X}^{(e),N}_{i,j} \left( 1 - \frac{1}{ \sqrt{ \partial_z\partial_{\bar z} L_{\mu_{i,j}^N}( \theta e_i \bar e_j \sqrt{N}) }} \right)\,. \]

We next show that for all $\delta> 0$ : 
\begin{equation}\label{bv} \lim_{N \to + \infty} \sup_{e \in V^\epsilon_N} \Pp[ || Y^{(e), N} || > \delta] =  0 \end{equation}
Indeed, we have the following lemma which is a variant of \cite[Theorem 2.1.22 ]{AGZ} :
\begin{lemma}
Consider for all $N \in \N$ a random Hermitian matrix $A^N$ with independent subdiagonal entries which are centered and for all $k \in \N$ :

\[ r_k^N = \max_{i,j} N^{- k/2} \E[ |A^N_{i,j}|^k]  \]

Suppose that there exists $N_0 \in \N, C > 0$ such that for $N \geq N_0$ : 

\[ r_2^N \leq 1, \qquad  r_k^N \leq k^{Ck} \]
Then for all $\delta > 0$, $\Pp[ \lambda_{\rm max}(A^N) > 2 + \delta]$ goes to zero as $N$ goes to infinity.
\end{lemma}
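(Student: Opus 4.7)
\medskip

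\noindent\emph{Proof proposal.} The plan is to use the standard moment method (Wigner's approach in the refined Füredi--Komlós--Soshnikov form) which handles moments that blow up at the mild rate $k^{Ck}$. For any integer $p=p_N$, Markov's inequality and the bound $\lambda_{\max}(A^N)^{2p}\le \tr((A^N)^{2p})$ give
$$\Pp[\lambda_{\max}(A^N)>2+\delta]\le \frac{\E[\tr((A^N)^{2p})]}{(2+\delta)^{2p}}\,,$$
so the whole problem reduces to controlling $\E[\tr((A^N)^{2p})]$ for a well-chosen sequence $p_N\to\infty$.

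Expanding the trace one gets
$$\E[\tr((A^N)^{2p})]=\sum_{i_0,i_1,\dots,i_{2p-1}}\E\bigl[A^N_{i_0 i_1}A^N_{i_1 i_2}\cdots A^N_{i_{2p-1}i_0}\bigr]\,,$$
a sum indexed by closed walks $w=(i_0\to i_1\to\cdots\to i_{2p-1}\to i_0)$ of length $2p$ on $\{1,\dots,N\}$. Using independence of the subdiagonal entries and the fact that each entry is centered, only walks for which every traversed unoriented edge $\{i,j\}$ appears at least twice survive. Grouping the surviving walks by their \emph{shape} (the isomorphism class of the underlying multigraph together with the sequence in which vertices and edges are discovered), a shape with $v$ distinct vertices, $k$ distinct edges and edge multiplicities $m_1,\dots,m_k$ (so $\sum m_i=2p$ and $m_i\ge 2$) contributes at most
$$N^{v}\cdot\#\{\text{walks of that shape rooted at a given vertex}\}\cdot \prod_{i=1}^k N^{m_i/2}r^{N}_{m_i}\,,$$
once the normalization is read off the definition of $r_k^N$. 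Since $m_i\ge 2$, one has $k\le p$ and, by connectedness, $v\le k+1\le p+1$, with equality exactly when the shape is a tree traversed in a depth-first manner (each edge exactly twice); the number of such \textbf{double-tree walks} of length $2p$ is the Catalan number $C_p\le 4^p/\sqrt{\pi p^3}$.

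The dominant contribution thus comes from double trees and is bounded by $N\cdot C_p\cdot (r_2^N)^p\le N\cdot 4^p/\sqrt{\pi p^3}$. The remaining shapes either have $v<p+1$ (closing at least one cycle) or have some $m_i>2$ (some edge traversed more than twice). For such a shape, trading each unit of $2p-2k$ or of $p+1-v$ against the hypothesis $r_{m_i}^N\le m_i^{Cm_i}$ and the combinatorial fact that the number of shapes with a given excess is bounded polynomially in $p$, one obtains (this is the Soshnikov-type step)
$$\E[\tr((A^N)^{2p})]\le N\cdot 4^p\bigl(1+o(1)\bigr)\qquad\text{as } N\to\infty,$$
provided that $p_N\to\infty$ slowly enough that $(2p_N)^{C(2p_N)}$ is beaten by the $N^{-1}$-factors one collects from each non-tree edge; the choice $p_N=\lfloor(\log N)^2\rfloor$ (or any $p_N$ with $\log N\ll p_N\ll (\log N)^2/\log\log N$, say) works. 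Plugging this into Markov yields
$$\Pp[\lambda_{\max}(A^N)>2+\delta]\le \frac{N\cdot 4^{p_N}(1+o(1))}{(2+\delta)^{2p_N}}=N^{1+o(1)}\Bigl(\tfrac{2}{2+\delta}\Bigr)^{2p_N}\xrightarrow[N\to\infty]{}0,$$
since the geometric decay in $p_N$ beats $N^{1+o(1)}$ as soon as $p_N\gg \log N$.

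The genuinely technical step is the third one, namely the uniform estimate on the non-tree shape contributions under the weak moment assumption $r_k^N\le k^{Ck}$: one must show that the combinatorial gain in $N$ obtained by closing a cycle or by merging two edges into an edge of multiplicity $\ge 3$ dominates the moment blow-up $m^{Cm}$ for all $m\le 2p_N$. This is exactly the argument that allows Soshnikov's refinement of Füredi--Komlós to go through for sub-Gaussian entries, and it is where the constraint on the growth of $p_N$ comes from. Everything else is bookkeeping around the standard Wigner trace expansion.
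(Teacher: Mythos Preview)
Your proposal is correct and follows exactly the approach the paper takes: the paper does not give an independent proof but simply refers to Theorem~2.1.22 in \cite{AGZ}, noting that the argument there (the F\"uredi--Koml\'os moment method with the Soshnikov-type control of non-tree walks under the growth $r_k^N\le k^{Ck}$) goes through verbatim once one observes that only upper bounds on moments are used when the entries are centered. Your sketch reproduces precisely that argument, so there is nothing to add.
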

The proof of this lemma is strictly identical to Theorem 2.1.22 in \cite{AGZ} as we only need to estimate large moments of the matrix, which only requires upper bounds on  moments of the entries (and not equality as assumed in \cite{AGZ}) as soon as the entries are centered.
We apply this  lemma to the matrices $ Y^{(e),N} / \delta$  to derive \eqref{bv}: note that the hypothesis on the upper bound on moments is a clear consequence of our bounds on Laplace transform.

Hence, since
$$X^{(e),N} =\widetilde{X}^{(e),N} + \frac{2\theta}{\beta} e e^{*} + \Delta^{(e),N}+Y^{(e), N}\,,$$
we conclude by combining \eqref{bv} and Lemma \ref{bv0} that
 for  $\epsilon\in ]1/4,1/8[$ and all $\delta >0$

\begin{equation}\label{bv3} \lim_{N \to \infty} \sup_{e \in V^\epsilon_N} \Pp_N^{(e,\theta)}[||X^{(e),N} - ( \widetilde{X}^{(e),N} + \frac{2\theta}{\beta} e e^{*}) ||> \delta ] = 0\end{equation}
since all estimates were clearly uniform on $e\in V^{\epsilon}_{N}$.

And so, to conclude we need only to identify the limit of $\lambda_{\rm max}(\widetilde{X}^{(e),N} + \frac{2\theta}{\beta} e e^{*})$. It is given by the well known BBP transition. We collect below the main elements of the argument for completness.
To identify this limit, we  easily see as in \cite{BGM} that the  eigenvalues of  $\widetilde{X}^{ (e),N} + \frac{2\theta}{\beta} e e^{*}$ satisfy
$$0=\det (z- \widetilde{X}^{(e),N} - \frac{2\theta}{\beta} e e^{*})=\det (z- \widetilde{X}^{(e),N} )\det (1- \frac{2\theta}{\beta} (z- \widetilde{X}^{ (e),N} )^{-1}e e^{*})$$ 
and therefore $z$ is an eigenvalue away from the  spectrum  of $ \widetilde{X}^{(e),N} $ iff

\[ \langle e, (z - \widetilde{X}^{ (e),N})^{-1} e \rangle = \frac{\beta}{2\theta} \,.\]
 But it was shown 
in  Theorem 2.15 of \cite{Erdos} that  for all  $z>2$, all $v\in \mathbb S^{N-1}$, $\langle v, (z - \widetilde{X}^{(e),N})^{-1} v \rangle$ converges almost surely towards $G_\sigma(z)$
and therefore we conclude that the largest eigenvalue $\lambda_{\rm max} ( \widetilde{X}^{ (e),N} + \frac{2\theta}{\beta} e e^{*})$,  must converge towards the solution $\rho_\theta$ to 
$$G_\sigma(\rho_\theta)=\frac{\beta}{2\theta}$$
as soon as it is strictly  greater than $2$.
We find a unique solution to this equation: it is  given by
$$\rho_\theta=\frac{2\theta}{\beta}+\frac{\beta}{2\theta}\,.$$
Reciprocally, for any $x>2$, we can find $\theta_x=\frac{\beta}{2}(x+\sqrt{x^2-4})$ so that $x=\rho_{\theta_x}$. Hence, we have proved that for any sequence of vectors $e\in V_N^\epsilon$ we have the desired estimate for any $\eta>0$
$$\lim_{N \to \infty} \sup_{e \in V^\epsilon_N} \Pp^{(e,\theta_x)}_N[ |\lambda_{\rm max} - x| \geq \eta ] = 0$$
which also entails the convergence of the supremum over $V_N^\epsilon$ and thus the Lemma. 

 \subsection{Proof of Lemma \ref{dif} for Wishart matrices}
We next prove Lemma \ref{dif} for Wishart matrices and fix  $e=(e^{(1)},e^{(2)})\in V_N^\epsilon$.  We decompose as in the previous proof
$$X^{(e),N} =\widetilde{X}^{(e),N} +\mathbb E[ X^{(e),N}]+Y^{(e), N}\,,$$
where  the entries of $\widetilde{X}^{(e),N}$ are centered and with covariance $1/N$ and $Y^{(e), N}$ goes to zero in norm.
We then find by the same argument that 

\[ \E[ X^{(e),N}] = \frac{2 \theta}{i} \begin{pmatrix} 0  & e^{(1)} (e^{(2)})^* \\
e^{(2)} (e^{(1)})^*& 0 \end{pmatrix} +\Delta^{(e),N} \]
where $\|\Delta^{(e),N}\|\le N^{-\kappa(\epsilon)}$ and $e^{(1)}$ (resp. $e^{(2)}$) is the vector made of  the first $L$ (resp. $M$ last) coordinates of $e$. 
Letting 
\[ S^{(e)} = \begin{pmatrix} e^{(1)}  & 0 \\
0 & e^{(2)} \end{pmatrix} \
\mbox{ and 
} T^{(e)} = \begin{pmatrix} 0  & (e^{(2)})^* \\
(e^{(1)})^* & 0 \end{pmatrix} \]
we notice that
\[ 
\begin{pmatrix} 0  & e^{(1)} (e^{(2)})^* \\
e^{(2)} (e^{(1)})^*& 0 \end{pmatrix}= S^{(e)} T^{(e)}\,. \]
Therefore, we need to find $z>{\tilde b_\alpha}$ such that
\begin{equation}\label{mn}
0=\det( z-\widetilde{X}^{N, (e)} -\frac{2\theta}{i} S^{(e)} T^{(e)})= \det( z-\widetilde{X}^{N, (e)})  \det( 1- \frac{2\theta}{i} T^{(e)} (z-\widetilde{X}^{N, (e)})^{-1}S^{(e)})\end{equation}
By writing $R_{\widetilde{X}^{N, (e)}}(z)=(z-\widetilde{X}^{N, (e)})^{-1}$ by blocks with $\widetilde X^{N,(e)}$ with  upper right $L\times M$ block $\widetilde G^{N,(e)}$, we get :

\[ R_{\widetilde{X}^{N, (e)}}(z) = \begin{pmatrix} R_{1,1}(z) & R_{1,2}(z) \\
R_{2,1}(z) & R_{2,2}(z) \end{pmatrix}= \begin{pmatrix} z R_{\widetilde{G}^{N, (e)}({\widetilde{G}^{N, (e)})^*}}(z^2) & \widetilde{G}^{N, (e)} R_{({\widetilde{G}^{N, (e)})^*{\widetilde{G}^{N, (e)}}}}(z^2) \\
 R_{({\widetilde{G}^{N, (e)})^*{\widetilde{G}^{N, (e)}}}}(z^2)({\widetilde{G}^{N, (e)}})^* & z R_{({\widetilde{G}^{N, (e)}})^* {\widetilde{G}^{N, (e)}}}(z^2) 
\end{pmatrix}
\]
where $R_{1,1}$ is $L\times L$, $R_{1,2}$ $L\times M$, $R_{2,2}$ $M\times M$,
we get the simpler equation
\[ \det \left( I -  \frac{2 \theta}{i} \begin{pmatrix} \langle e^{(2)} , R_{2,1}(z) e^{(1)} \rangle & \langle e^{(2)} , R_{2,2}(z) e^{(2)} \rangle \\
\langle e^{(1)} , R_{1,1}(z) e^{(1)} \rangle & \langle e^{(1)} , R_{1,2}(z) e^{(2)} \rangle 
\end{pmatrix}
\right) = 0 
\]
Therefore, we need to find $z$ such that

\begin{equation} \label{total}
 |1 - \frac{2 \theta}{i} \langle e^{(2)} , R_{2,1}(z) e^{(1)} \rangle |^2 - \frac{4 \theta^2}{i^{2}} \langle e^{(2)} , R_{2,2}(z) e^{(2)} \rangle \langle e^{(1)} , R_{1,1}(z) e^{(1)} \rangle =0 \end{equation}
 We are going to prove that
 \begin{lemma}\label{Res}  For any $\delta,\varepsilon >0$
 
\begin{eqnarray*}
\limsup_{N\ra\infty} \sup_{e\in V_N^\epsilon} \mathbb P^{(e,\theta)}_N \left(\sup_{ z \ge \tilde b_\alpha +\varepsilon} | \langle e^{(1)} , R_{1,1}(z) e^{(1)} \rangle - z(1+\alpha)||e^{(1)}||_2^2 G_{MP(\alpha)}((1+\alpha)z^2) |>\delta\right)&=&0\\
\limsup_{N\ra\infty} \sup_{e\in V_N^\epsilon} \mathbb P^{(e,\theta)}_N \left(\sup_{\Im z \ge  \tilde b_\alpha +\varepsilon} | \langle e^{(2)} , R_{2,2}(z) e^{(2)} \rangle - z(1+\alpha)||e^{(2)}||_2^2 G_{MP(1 / \alpha)}((1+\alpha)z^2) | >\delta\right)&=&0\\
\limsup_{N\ra\infty} \sup_{e\in V_N^\epsilon} \mathbb P^{(e,\theta)}_N \left( \sup_{\Im z \ge  \tilde b_\alpha +\varepsilon}| \langle e^{(2)} , R_{2,1}(z) e^{(1)} \rangle | >\delta\right)&=&0\\
\end{eqnarray*}
where $G_{MP(\alpha)}$ is the Stieltjes transform of a Pastur Marchenko law with parameter $\alpha$.
\end{lemma}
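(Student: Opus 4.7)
The plan is to establish the three concentration statements simultaneously through isotropic resolvent estimates for $\widetilde X^{N,(e)}$. Recall that under $\Pp_N^{(e,\theta)}$ the upper-right block $\widetilde G^{N,(e)}$ consists of $L \times M$ independent centered entries with variance exactly $1/N$ (after the rescaling by $(\partial_z \partial_{\bar z} L_{\mu^N_{i,j}})^{-1/2} = 1 + O(N^{-2\epsilon})$) and uniform sub-Gaussian moments inherited from Assumption \ref{A0} or \ref{A0c}. Thus $\widetilde X^{N,(e)}$ fits exactly the setting of a centered block rectangular random matrix with essentially i.i.d.\ entries, for which isotropic Marchenko--Pastur type laws apply.

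First I would restrict to a bounded range of $z$. By the proof of Lemma \ref{fac}, for any fixed $K > \tilde b_\alpha$ we have $\|\widetilde X^{N,(e)}\| \le K$ with probability tending to one, uniformly over $e \in V_N^\epsilon$. For $z > K+1$ a Neumann expansion of the resolvent matches the relevant Stieltjes transforms to leading order and the three desired estimates follow immediately. It therefore suffices to handle $z$ in the compact interval $[\tilde b_\alpha + \varepsilon, K+1]$.

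Next I would apply the isotropic local Marchenko--Pastur law to $\widetilde X^{N,(e)}$: for deterministic unit vectors $u, v \in \C^N$ and $z$ at positive distance from the limiting spectrum,
\[ \langle u, R_{\widetilde X^{N,(e)}}(z) v\rangle = \langle u, \Pi(z) v\rangle + o(1) \]
in probability, where the deterministic equivalent is block-diagonal, $\Pi(z) = \mathrm{diag}(m_1(z) I_L,\, m_2(z) I_M)$. Combining the block resolvent identity from the excerpt with the fact that the limiting spectrum of $\widetilde G \widetilde G^*$ is the image of $\pi_\alpha$ under $x \mapsto x/(1+\alpha)$, one reads off $m_1(z) = z(1+\alpha) G_{MP(\alpha)}((1+\alpha)z^2)$, and similarly $m_2(z) = z(1+\alpha) G_{MP(1/\alpha)}((1+\alpha)z^2)$ for $\widetilde G^* \widetilde G$. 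Instantiating with the three pairs $(u,v) = (e^{(1)}\oplus 0, e^{(1)}\oplus 0)$, $(0\oplus e^{(2)}, 0\oplus e^{(2)})$, and $(0\oplus e^{(2)}, e^{(1)}\oplus 0)$ yields the three claims, the third following because $\Pi(z)$ kills cross-block pairings. Uniformity in $z$ on the compact range follows from a net argument combined with the $O(\varepsilon^{-2})$-Lipschitz estimate on $z \mapsto R(z)$ at distance $\ge \varepsilon/2$ from the spectrum; uniformity in $e \in V_N^\epsilon$ is automatic since the constants in the isotropic law depend only on $\|u\|_2, \|v\|_2 \le 1$ and on the uniform sub-Gaussian bounds of the tilted entries.

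The main obstacle is the uniform-in-$e$ version of the isotropic law: although the classical proofs via Schur complement and martingale decomposition do give constants depending only on the subgaussian norm of the entries and on $\|u\|_2, \|v\|_2$, the law of each entry of $\widetilde G^{N,(e)}$ depends on $e$ through the tilt. Since the variance perturbation is $1 + O(N^{-2\epsilon})$ uniformly on $V_N^\epsilon$ and sub-Gaussianity is preserved, the standard argument goes through verbatim with error bounds uniform in $e$; this is precisely the step where the delocalization $|e_i| \le N^{-1/4-\epsilon}$ enters crucially, guaranteeing that every entry of $\widetilde G^{N,(e)}$ is tilted by an amount tending to zero and hence that the variance profile is essentially flat.
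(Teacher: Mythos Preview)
Your approach is correct and, for the first two statements, coincides with the paper's: both invoke an isotropic local law (the paper cites \cite[Theorem 2.5]{Erdos}) to control $\langle e^{(j)}, R_{j,j}(z)\, e^{(j)}\rangle$ for $j=1,2$.

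The genuine difference is in the third statement. The paper explicitly observes that $R_{2,1}(z)=\widetilde G^{*}(z^2-\widetilde G\widetilde G^{*})^{-1}$ is not itself a resolvent of a Wishart matrix, and therefore that the cited isotropic law does not apply to it directly. The paper then proceeds by an elementary moment computation: on the event $\{\|\widetilde G^{*}\widetilde G\|\le b_\alpha+\varepsilon\}$ it expands $R_{2,1}(z)$ as a truncated Neumann series and shows, via graph-counting arguments on words, that each moment $\langle e^{(1)}, G(G^{*}G)^{k} e^{(2)}\rangle$ has expectation $O(N^{-1/2})$ and variance $O(N^{-1})$, uniformly in $e$. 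Your route instead applies an isotropic local law to the full $N\times N$ block matrix $\widetilde X^{N,(e)}$ itself, so that $R_{2,1}$ is simply the off-diagonal block of a genuine resolvent; the block-diagonality of the deterministic equivalent $\Pi(z)$ then kills the cross term in one stroke. This is perfectly legitimate --- isotropic laws for the linearized (bipartite) model are available --- and is more economical than the paper's hands-on combinatorics. Conversely, the paper's argument for the third point is completely self-contained and requires no black-box beyond moment bounds, which may be preferable if one wishes to avoid citing the more general variance-profile or linearization versions of the local law. Your remark on uniformity in $e\in V_N^\epsilon$ (via the $O(N^{-2\epsilon})$ perturbation of the variance profile and uniform sub-Gaussianity of the tilted entries) is the correct justification and is implicit in both approaches.
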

We first derive Lemma \ref{dif} assuming that Lemma \ref{Res} holds. 
We have seen in Lemma \ref{b66} that $\|e^{(1)}\|_2$ converges towards $x_{\theta,\alpha}$ almost surely.
Therefore, we arrive to the limiting equation 

\[  (1+\alpha)^2 z^2 G_{MP(\alpha)}((1+\alpha)z^2) G_{MP(1 /\alpha)}((1+\alpha)z^2) = \frac{i^2}{ 4 \theta^2 x_{\theta, \alpha} ( 1 - x_{\theta, \alpha})} \]
Now, we claim that $\varphi(\theta)=  \theta^2 x_{\theta, \alpha} ( 1 - x_{\theta, \alpha})$ is continuous, increasing, going from
$0$ to $+\infty$. As $x_{\theta,\alpha}$ is a complicated solution of $\theta$ ( solution of a degree three polynomial equation), we use the following asymptotic characterization which easily follows from the previous large deviation considerations, see Lemma \ref{b66}:
$$\frac{4\theta}{i} x_{{\theta,\alpha}}(1-x_{\theta,\alpha})=\partial_{\theta}F(\theta,w_{i})\,,$$
where we use that the derivatives of $x_{\theta,\alpha}$ vanishes as it is a critical point of the maximum.
We moreover notice that $G(\theta)=F(\sqrt{\theta},w_{i})$ is convex in $\theta$ (as a supremum of convex functions).
Hence,
$$\varphi (\theta)= \frac{i}{4}\theta \partial_{\theta} F(\theta, w_{i})=\frac{i}{2}\theta^{2} G'(\theta^{2})$$
It follows that $\varphi$ is smooth as $F$ is and moreover
$$\varphi'(\theta)=i(\theta G'(\theta^{2})+\theta^3 G''(\theta))\,.$$
But since $\varphi$ is non negative, $G'$ is non negative and so $\varphi'$ is non negative for all $\theta\ge 0$.
The fact that $\varphi$ goes to infinity at infinity is clear as $x_{\theta,\alpha}$ then goes to $1/2$.
Moreover, for $z > {\tilde b_\alpha}$, $z \mapsto z G_{MP(\alpha)}((1+\alpha)z^2)$ and $z \mapsto z G_{MP(1 / \alpha)}((1+\alpha)z^2)$ are positive and decreasing,
and therefore so are their product.  Hence,  there exist a $\theta_{\alpha}> 0$ so that for every $\theta \geq \theta_{\alpha}$ , the equation above has a unique solution on $[\tilde b_{\alpha}, + \infty[$. Moreover, if we denote $\rho_{\theta}$ this solution, $ \theta \mapsto \rho_{\theta}$ is a bijection from $[\theta_{\alpha}, + \infty[$ onto $[\tilde b_{\alpha}, + \infty[$.

\bigskip

{\it Proof of  Lemma \ref{Res}.} 
We recall that $G=G_{L,M}$ is  a $L\times M$ matrix with centered entries with covariance one and sub-Gaussian tails, $e=(e^{(1)},e^{(2)})$ a unit vector and 
$$R_{1,1}(z)=(z-GG^*)^{-1}, R_{22}(z)=(z-G^*G)^{-1}, R_{1,2}(z)=G(z-G^*G)^{-1}.$$

The first two points of the Lemma are direct consequences of \cite[Theorem 2.5]{Erdos}. 
It remains to see that $\langle e^{(2)} , R_{2,1}(z) e^{(1)} \rangle$  goes to 0 as $N$ goes to infinity. Because $R_{2,1}(z)=G(z-G^*G)^{-1}$ is not the resolvent of the Wishart matrix,  but its multiplication by $G$,
we can not apply directly  \cite[Theorem 2.5]{Erdos}. We will give an elementary proof  of this result,based on classical moment computations.
Indeed, for $\varepsilon>0$, on the set where $\{\|G^* G\|\le b_\alpha+\varepsilon\}$, for $z>b_\alpha+2\varepsilon$ we can expand

\begin{eqnarray*}
\langle e^{(2)} , R_{2,1}(z) e^{(1)} \rangle &=& - \sum \frac{ \langle e^{(1)}, G (G^* G)^k e^{(2)}\rangle}{ z^{ 2k +1}} \\
&=&- \sum_{k=1}^K \frac{ \langle e^{(1)}, G (G^* G)^k e^{(2)}\rangle}{ z^{ 2k +1}}+ O \bigg(\frac{1}{\varepsilon} \left(\frac{b_\alpha+\varepsilon}{b_\alpha+2\varepsilon}\right)^{K+1} \bigg)  \end{eqnarray*}
and hence  it is enough to get the convergence in probability of $K$ moments with $K\ge 2\varepsilon^{-1} \ln \varepsilon^{-1}$ :

\[ \lim_{N \to \infty} \langle e^{(1)}, G (G^* G)^k e^{(2)}\rangle = 0 ,\,  k\le K\,.\]
To this end we first prove that
\begin{equation}\label{exp} \lim_{N \to \infty} \E[ \langle e^{(1)}, G (G^* G)^k e^{(2)}\rangle ] = 0 \end{equation}
and  then
\begin{equation}\label{cov}\lim_{N \to \infty} Var( \langle e^{(1)}, G (G^* G)^k e^{(2)}\rangle ) = 0 \,.\end{equation}
We first prove \eqref{exp}. It is clearly true for $k=0$ by centering of the entries  and so we consider $k\ge 1$.
Let's call $\mathcal{W}_{2k+1}$ the set of words $(v_1,...,v_{2k+2})$ of length $2k+1$ so that $v_{2j} \in \{1,...,L \}$ and $v_{2j+1} \in \{1,...,M \}$. We use the following notation : 

\[ E_v = \E[ a_{v_1,v_2} a_{v_2, v_3} ... a_{v_{2k+1}, v_{2k+2}}] \]
We have 

\[ \E[\langle e^{(1)}, G (G^* G)^k e^{(2)} \rangle] = \frac{1}{ N ^{k+1/2}} \sum_{v \in \mathcal{W} _{2k+1}} e^{(1)}_{v_1} E_v e^{(2)}_{v_{2k+2}} \]

Given a word $v$, we can construct a bipartite graph $G_v$ whose vertices are the $\{v_1, v_3,...\} \cup  \{L + v_2, L+ v_4,...\}$ of whose edges (occasionally multiple) are the $(L + v_{2i}, v_{2i-1})$ and $(L + v_{2i}, v_{2i+1})$. We denote $V^{(1)}(v)$ the number of vertices in $G_v$ lying in $\{ 1,...,L \}$, $V^2(v)$ the number of vertices in $G_v$ lying in $\{ L+1,...,L+M \}$ and $V(v) = V^{(1)}(v)+ V^2(v)$ and $A(v)$ the number of edges of $G_v$. If $e$ is an edge of $G_v$, we denote $n_{v}(e)$ the multiplicity of this edge. 

Let's recall that here the $a_{i,j}$ are independant but not identically distributed. Nevertheless their variance are $1$ and their moments are bounded uniformly i.e. for every $k$ there exists $C_k < + \infty$ such that : 

\[ \sup_{N,i,j} \E[ |a_{i,j}|^k ] \leq C_k \]

For every word $v$ of length $k$, we can define $C_v = \prod_{j \leq k} C_j^{l(v,j)} $ where $l(v,j)$ is the number of edge of multiplicity $j$ in $G_v$. we then have 

\[ |E_v| \leq C_v \]

We say that two words $v,w$ are equivalent if there exists a bijection $\phi: \{ 1,...,L \} \rightarrow \{ 1,...,M \} $ and a bijection $\psi: \{ 1,...,M \} \rightarrow \{ 1,...,M \} $ such that $v_{2j} = \phi( w_{2j})$ and $v_{2j+1} = \psi( w_{2j+1})$. If two words $v$ and $w$ are equivalent then $C_v = C_w$. 

Let $\mathcal{T}_{2k+1}$ be a the quotient set of words of length $2k+1$ for this equivalency relationship. We have 

\begin{eqnarray*}
\E[ \langle e^{(1)}, G (G^* G)^k e^{(2)} \rangle]
&=& \frac{1}{ N ^{k+1/2}} \sum_{j =2}^{2k+2} \sum_{ t \in \mathcal{T}_{2k+1}, V(v) = j } \sum_{v | v \sim t} e^{(1)}_{v_1} E_v e^{(2)}_{v_{2k+2}}
\end{eqnarray*} 

Let's notice that if $G_v$ has an edge of multiplicity $1$, then $E_v = 0$ (since the $a_{i,j}$ are independant and centered). So for $E_v$ to be non zero we need that $A(v) \leq (2k+1)/2 $ so $A(v) \leq k$. Since $G_v$ is connected $V(v) \leq A(v)+1 \leq k+1$. If $v \in \mathcal{W}_{2k+1}$, there exists $ N_v := (L - 1)...(L- V^{(1)}(v) +1)( M - 1)...(M - V^{2}(v) +1) \leq N^{V(v) -2}$ equivalent words $w_1$ provided we fix $v_1$ and $v_{2k+2}$ so we have the following bound :

\begin{align*}
 \E [ \langle e^{(1)}, G (G^* G)^k e^{(2)} \rangle] 
&\leq & \frac{1}{ N ^{k+1/2}} \sum_{j =2}^{k+1} \sum_{ t \in \mathcal{T}_{2k+1}, V(t) = j } C_t N_t \sum_{1\leq v_1 \leq L, 1 \leq v_{2k+2}  \leq M}  |e^{(1)}_{v_1}  e^{(2)}_{v_{2k+2}}|
\end{align*}

By using the Cauchy Schwartz inequality, we have that :

\[ \sum_{1\leq i \leq L, 1 \leq j \leq M} | e^{(1)}_{i}  e^{(2)}_{j}|\leq N \| e^{(1)}||_2\times ||e^{(2)}||_2 \leq N \]
which yields
\begin{eqnarray*}
 \E [\langle e^{(1)}, G (G^* G)^k e^{(2)} \rangle ] &\leq & \frac{1}{ N ^{k-1/2}} \sum_{j =2}^{k+1} \sum_{ t \in \mathcal{T}_{2k+1}, V(t) = j } C_t N^{j -2 } 
\end{eqnarray*}
The leading order term here is in $N^{-1/2}$ for $k\ge 1$ and so $$\lim_{N \to \infty} \sup_{\|e\|_2=1} |\E [\langle e^{(1)}, G (G^* G)^k e^{(2)} \rangle]| =0 \,.$$
We proceed similarly for the covariance \eqref{cov}:
\[ \Var(\langle e^{(1)}, G (G^* G)^k e^{(2)} \rangle) = \frac{1}{ N ^{2k+1}} \sum_{v \in \mathcal{W}_{2k+1}, w \in \mathcal{W}_{2k+1}} e^{(1)}_{v_1} e^{(1)}_{w_1} T_{v,w} e^{(2)}_{v_{2k+2}} e^{(2)}_{w_{2k+2}} \]

Where $ T_{v,w} = E_{v,w} - E_v E_w$ and $E_{v,w} = \E[ a_{v_1,v_2} a_{v_2, v_3} ... a_{v_k, v_{k+1}}a_{w_1,w_2} a_{w_2, w_3} ... a_{w_k, w_{k+1}}]$
We extend naturally the previous definitions to couples of words. Let us  now do the same analysis than before with couples of words. Let's take $\tilde{\mathcal{T}}_{2k+1}$ the quotient set for the equivalency relationship for couples of words. Let $(v,w) \in \tilde{\mathcal{T}}_{2k+1}$

First, if $G_{v,w}$ is not connected, since it is the union of two connected graphs $G_v$ and $G_w$, we have that $G_v$ and $G_w$ don't have any edges in common and so, by independence of the entries $T_{v,w} = 0$. So we can assume that $G_{v,w}$ is connected.

Then several cases arise : 

First, if $v_1 \neq w_1$ and $v_{2k+2} \neq w_{2k +2}$, then if one edge of $G_{v,w}$ is of multiplicity 1, then $T_{v,w} =0$. 
So we can assume that all edges are of multiplicity at least $2$. We deduce that $A(v,w) \leq 2k +1$ and $V(v,w)\leq 2k +2$. Let $N_{v,w}$ be the number of couple of words equivalent to $(v,w)$ provided $(v_1,w_1,v_{2k+2}, w_{2k +2})$ are fixed, we have $N_{v,w} \leq N^{2k - 2}$. Hence

\[ \sum_{(u,t) \sim (v,w)} e^{(1)}_{u_1} e^{(1)}_{t_1} T_{v,w} e^{(2)}_{u_{2k+2}} e^{(2)}_{t_{2k+2}} \leq N^{2k} (C_{v,w} - C_v C_w) \]

Then, if $v_1 = w_1$ and $v_{2k+2} \neq w_{2k +2}$ or if $v_1 \neq w_1$ and $v_{2k+2} = w_{2k +2}$, the same reasoning concerning the edges holds. So, we have $V(v,w)\leq 2k +2$ and if $N_{v,w}$ is the number of couple of words equivalent to $(v,w)$ provided $(v_1,w_1,v_{2k+2}, w_{2k +2})$ are fixed, we have $N_{v,w} \leq N^{2k - 1}$. If we are in the case $v_1= w_1$ : 

\[ \sum_{(u,t) \sim (v,w)} e^{(1)}_{u_1} e^{(1)}_{t_1} T_{v,w} e^{(2)}_{u_{2k+2}} e^{(2)}_{t_{2k+2}} \leq N^{2k} ||e^{(1)}||^2 (C_{v,w} - C_v C_w)  \]

And lastly, $v_1 = w_1$ and $v_{2k+2} = w_{2k +2}$ we have again $N_{v,w} \leq N^{2k}$ and 

\[  \sum_{(u,t) \sim (v,w)} e^{(1)}_{u_1} e^{(1)}_{t_1} T_{v,w} e^{(2)}_{u_{2k+2}} e^{(2)}_{t_{2k+2}} \leq N^{2k} ||e^{(1)}||^2 ||e^{(2)}||^2 (C_{v,w} - C_v C_w) \]

So we have 

\[ \Var(\langle e^{(1)}, G (G^* G)^k e^{(2)} \rangle) = O \left( \frac{1}{N} \right) \]

\hfill\qed

\section{Appendix: Proof of Lemma \ref{convmun}}

In this section, we want to prove that the assumptions \ref{A0} and \ref{A0c} are verified if $\mu_{i,j}$ are  supported inside a common compact $K$ or satisfy a log-Sobolev inequality with a uniformly bounded constant $c$
for the  matrices $X_N^{(1)},X_N^{(2)},X_N^{(w_{1})},X_{N}^{(w_{2})}$. 

\begin{lemma} There exists $\kappa\in (0,\frac{1}{10})$ such that

\[ \lim_{N \to \infty} \frac{1}{N} \ln \Pp[ d( \mu_{X_N^{(\beta)}}, \sigma_\beta) > N^{-\kappa} ] = - \infty \]
for $\beta=1,2,{w_{1}},w_{2}$, where $\sigma_\beta$ is the semi-circle law when $\beta=1,2$ and the Pastur-Marchenko law with index $\alpha$ if $\beta={w}$ (in the latter case we assume $M/N-\alpha=o(N^{-\kappa}))$.
\end{lemma}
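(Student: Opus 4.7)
The plan is to invoke the triangle inequality
\[
d\bigl(\hat\mu^N_{X_N^{(\beta)}},\sigma_\beta\bigr) \;\le\; d\bigl(\hat\mu^N_{X_N^{(\beta)}}, \E\hat\mu^N_{X_N^{(\beta)}}\bigr) + d\bigl(\E\hat\mu^N_{X_N^{(\beta)}},\sigma_\beta\bigr),
\]
establish a superexponential concentration bound on the first (random) piece, and a deterministic polynomial estimate on the second. Choosing the exponent $\kappa$ below the smaller of the two rates and below the $\kappa$ coming from the hypothesis on $M/L$ in the Wishart case then produces the required $\kappa'\in(0,\tfrac{1}{10}\wedge\kappa)$.

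\emph{Lipschitz reduction.} The Hoffman--Wielandt inequality applied to self-adjoint matrices $A,B$ gives, for any $1$-Lipschitz $f:\R\to\R$,
\[
\Bigl|\textstyle\int f\,d\hat\mu^N_A - \int f\,d\hat\mu^N_B\Bigr| \;\le\; \frac{1}{\sqrt N}\|A-B\|_2.
\]
Since the entries of $X_N^{(\beta)}$ are obtained by dividing the independent variables $a_{i,j}^{(\beta)}$ (or their real and imaginary parts when $\beta=2,w_2$) by $\sqrt N$, the map from these independent real coordinates to $\int f\,d\hat\mu^N_{X_N^{(\beta)}}$ is $\sqrt 2/N$-Lipschitz in Euclidean norm. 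Under Assumption \ref{AC} the coordinates are either uniformly compactly supported or satisfy a uniform log-Sobolev inequality; Talagrand's concentration in the first case and Herbst's argument tensorised over independent entries in the second both yield sub-Gaussian concentration, giving
\[
\Pp\Bigl(\Bigl|\textstyle\int f\,d\hat\mu^N_{X_N^{(\beta)}} - \E\!\int\! f\,d\hat\mu^N_{X_N^{(\beta)}}\Bigr| > t\Bigr) \;\le\; 2\exp\bigl(-c N^2 t^2\bigr),\qquad t>0.
\]

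\emph{From a single $f$ to the Dudley distance.} By the exponential tightness of Lemma \ref{exptight}, for any $L>0$ we can choose $M$ so that $\{\|X_N^{(\beta)}\|\le M\}$ fails with probability at most $e^{-LN}$. On that event, $d(\hat\mu^N_{X_N^{(\beta)}},\E\hat\mu^N_{X_N^{(\beta)}})$ is computed against $1$-Lipschitz functions bounded by $1$ on $[-M,M]$, and such a class admits an $N^{-\kappa}$-net in sup-norm of cardinality at most $\exp(C_M N^{\kappa})$. Applying the previous concentration bound with $t=\tfrac14 N^{-\kappa}$ to each element of the net and taking a union bound yields
\[
\Pp\bigl( d(\hat\mu^N_{X_N^{(\beta)}}, \E\hat\mu^N_{X_N^{(\beta)}}) > \tfrac12 N^{-\kappa}\bigr) \;\le\; e^{-LN} + 2\exp\bigl(C_M N^{\kappa}-c N^{2-2\kappa}\bigr),
\]
and for $\kappa<\tfrac12$ (in particular $\kappa<\tfrac{1}{10}$) the second term decays faster than any fixed exponential, so letting $L\to\infty$ shows this probability is $e^{-\omega(N)}$.

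\emph{Bias of the mean and main obstacle.} It remains to check the deterministic estimate $d(\E\hat\mu^N_{X_N^{(\beta)}},\sigma_\beta)=O(N^{-\kappa_0})$ for some $\kappa_0>0$, which is the only analytic step of the proof and therefore the main obstacle. For Wigner matrices this is the classical moment-method rate: for fixed $k$, $\tfrac1N\E\operatorname{Tr}(X_N^{(\beta)})^k$ differs from the $k$-th moment of the semicircle by $O(N^{-2})$ uniformly under our hypotheses on the entries, and approximating a generic $1$-Lipschitz test function by a polynomial of degree $\sim N^c$ transfers this moment rate into a Dudley rate. The same scheme handles the Wishart case, with an extra contribution $O(|M/L-\alpha|)$ stemming from the $\alpha$-dependence of the limiting Pastur--Marchenko (or of $\sigma_w$), which is absorbed by the standing assumption $M/L-\alpha=o(N^{-\kappa})$. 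Picking $\kappa'$ smaller than $\kappa_0$, $\kappa$, and $\tfrac{1}{10}$ closes the argument.
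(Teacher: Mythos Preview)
Your triangle-inequality decomposition into a random concentration piece and a deterministic bias piece is exactly the skeleton the paper uses, and your concentration argument (Hoffman--Wielandt to get Lipschitz dependence, then Talagrand/Herbst tensorised over the entries, then a covering to pass from a single test function to the Dudley supremum) is precisely the content of the Guionnet--Zeitouni theorem the paper cites for that step. So that half of your argument matches the paper.

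Where you diverge from the paper, and where there is a real gap, is the bias estimate $d(\E\hat\mu^N_{X_N^{(\beta)}},\sigma_\beta)=O(N^{-\kappa_0})$. You propose to combine the fixed-$k$ moment rate $\tfrac1N\E\operatorname{Tr}(X_N^{(\beta)})^k - \int x^k\,d\sigma_\beta = O(N^{-2})$ with polynomial approximation of $1$-Lipschitz functions by polynomials of degree $\sim N^c$. This is only asserted, not carried out, and it is not straightforward: the $O(N^{-2})$ hides a combinatorial constant that grows at least like $k^{Ck}$, while Jackson-type approximants of degree $d$ have coefficient vectors whose $\ell^1$-norm is typically exponential in $d$. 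Balancing these two blow-ups to extract a genuine polynomial Dudley rate is exactly the nontrivial analysis you have skipped, and it is not clear the moment route delivers it without additional ideas. Your sketch also does not produce any explicit $\kappa_0$, so even if it could be salvaged you would still owe an argument that $\kappa_0$ can be taken compatible with the claimed range $\kappa\in(0,\tfrac{1}{10})$.

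The paper avoids all of this by quoting Bai's quantitative bounds on the \emph{Kolmogorov} distance between the expected spectral CDF and the limiting CDF, namely $O(N^{-1/4})$ for Wigner and $O(N^{-1/10})$ for Wishart, obtained via Stieltjes-transform methods rather than moments. It then converts Kolmogorov distance into Dudley distance by integration by parts, using the exponential tightness of Lemma~\ref{exptight} to restrict to a compact interval. The explicit threshold $\tfrac{1}{10}$ in the statement is inherited directly from Bai's Wishart rate; your approach, as written, does not account for where that number comes from.
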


For this, we will use two concentration results respectively from \cite{GZ} and \cite{B}. 

\begin{theo} By    \cite[Theorem 1.4)] {GZ} (for the compact case) and   \cite[Corollary 1.4 b)] {GZ} (for the logarithmic Sobolev case), we have for  $\beta=1,2,{w_{1}},w_{2}$,  and for  $N$ large enough
$$ \limsup_{N\ra\infty}\frac{1}{N^{7/6}}\ln  \Pp[ d( \mu_{X_N^{(\beta)}}, \E[ \mu_{X_N^{(\beta)}} ]) > N^{-1/6}]  <0$$
where $d$ is the Dudley distance.
\end{theo}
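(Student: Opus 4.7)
\textbf{Plan for the proof of Lemma \ref{convmun}.} The strategy is to split the task by the triangle inequality into a fluctuation piece and a bias piece:
\[
d(\hat\mu_{X_N^{(\beta)}},\sigma_\beta)\le d(\hat\mu_{X_N^{(\beta)}},\E[\hat\mu_{X_N^{(\beta)}}])+d(\E[\hat\mu_{X_N^{(\beta)}}],\sigma_\beta)\,.
\]
The first (fluctuation) term is handled for free by the quoted concentration inequality from \cite{GZ}: it gives
$\Pp(d(\hat\mu_{X_N^{(\beta)}},\E[\hat\mu_{X_N^{(\beta)}}])>N^{-1/6})\le e^{-cN^{7/6}}$, which is much smaller than $e^{-NL}$ for every $L$. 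So the whole task reduces to proving a \emph{deterministic} bound of the form
\[
d(\E[\hat\mu_{X_N^{(\beta)}}],\sigma_\beta)\le N^{-\kappa''}
\]
for some $\kappa''>0$, at which point I take $\kappa=\min(1/6,\kappa'',\kappa)$ (the last $\kappa$ being the one controlling $M/L-\alpha$) and conclude.

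\textbf{Bias bound via moments.} To bound the bias I will use the moment method. For each fixed integer $k$, expand
\[
\int x^k\,d\E[\hat\mu_{X_N^{(\beta)}}](x)=\frac{1}{N}\E[\tr(X_N^{(\beta)})^k]
=\frac{1}{N^{k/2+1}}\sum_{i_0,\dots,i_{k-1}}\E\bigl[a_{i_0 i_1}\cdots a_{i_{k-1} i_0}\bigr]\,.
\]
Under Assumption \ref{AC} the entries are either uniformly bounded or satisfy a uniform log-Sobolev inequality, so all moments of the entries are uniformly bounded in $i,j,N$. A standard graph/genus expansion (as in \cite[Lemma 2.1.6]{AGZ}) then shows that only pair partitions contribute to leading order; non-crossing pair partitions give the $k$-th moment of $\sigma_\beta$, crossing partitions and higher-genus graphs each lose at least a factor $N^{-2}$. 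For the Wishart models one further uses $|M/L-\alpha|\le N^{-\kappa}$ to replace the exact ratio by $\alpha$, producing an additional $O(k^2 N^{-\kappa})$ error. Since the entries are only assumed independent (not i.i.d.), I have to be mildly careful, but the uniform variance normalization and uniform moment bounds make the same combinatorics work, because the only moments that enter are the variance (which is exactly $1$ or $2$ on the diagonal, by Assumption) and higher moments that only appear in terms carrying extra powers of $N^{-1}$. The net outcome is
\[
\Bigl|\int x^k\,d\E[\hat\mu_{X_N^{(\beta)}}]-\int x^k\,d\sigma_\beta\Bigr|\le C^k\bigl(N^{-2}+N^{-\kappa}\bigr)
\]
for fixed $k$, with $C$ depending only on the uniform moment bounds.

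\textbf{From moments to Dudley distance.} To convert a bound on moments into a bound on $d(\cdot,\cdot)$, I pick a Lipschitz function $f$ with $\|f\|_L\le 1$, truncate to $|x|\le T$, and approximate the truncation by a polynomial $P_k$ of degree $k$ on $[-T,T]$ via Jackson's theorem, so that $\sup_{|x|\le T}|f(x)-P_k(x)|\le C T/k$ and the coefficients of $P_k$ are bounded by $(CT)^k$. Both $\E[\hat\mu_{X_N^{(\beta)}}]$ and $\sigma_\beta$ have sub-exponential tails by the exponential tightness of Lemma \ref{exptight} (which is based only on Assumption \ref{AC}), so choosing $T=(\log N)^2$ makes the tail contribution negligible. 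Balancing $k$ against the moment error and the Jackson error by taking for instance $k=c\log N$ and $T=(\log N)^2$ yields $d(\E[\hat\mu_{X_N^{(\beta)}}],\sigma_\beta)\le N^{-\kappa''}$ for a suitable $\kappa''\in(0,\tfrac{1}{10})$ (possibly after choosing $c$ small, so that $C^k$ does not kill the $N^{-2}$ or $N^{-\kappa}$ gain).

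\textbf{Main obstacle.} The bookkeeping of the genus expansion with non-identically distributed entries (and in the Wishart case, with the bipartite structure and the offset $M/L-\alpha=o(N^{-\kappa})$) is the only delicate point; everything else is a standard concentration plus moment-method combination. The concentration inequality from \cite{GZ} does all the probabilistic heavy lifting, and the moment estimate only needs to beat $N^{-\kappa''}$ for some $\kappa''>0$, which is very forgiving.
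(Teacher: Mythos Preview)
Your triangle-inequality split and your identification of the fluctuation piece with the cited concentration result from \cite{GZ} match the paper exactly. The difference, and the problem, lies in how you handle the deterministic bias $d(\E[\hat\mu_{X_N^{(\beta)}}],\sigma_\beta)$.

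The paper does \emph{not} use moments and Jackson approximation. Instead it invokes the quantitative CDF bounds of Bai \cite{B,B2}, namely
\[
\sup_{x}\bigl|F_{\sigma_\beta}(x)-\E[F_{X_N^{(\beta)}}(x)]\bigr|=O(N^{-1/4})\quad(\beta=1,2),\qquad O(N^{-1/10})\quad(\beta=w_1,w_2),
\]
and converts this Kolmogorov-distance estimate into a Dudley-distance estimate by integration by parts against a Lipschitz test function, together with the exponential tightness of Lemma \ref{exptight} to discard the tail outside a fixed window $[-M,M]$. This immediately gives $d(\E[\hat\mu_{X_N^{(\beta)}}],\sigma_\beta)=o(N^{-1/6})$.

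Your moment/Jackson route, as written, has a genuine gap: the balancing you propose does not close. With $k=c\log N$ and $T=(\log N)^2$ the Jackson error is $CT/k\asymp \log N\to\infty$, so the approximation does not even converge. Even with $T$ fixed (which is permissible, since the tail of $\E[\hat\mu_{X_N^{(\beta)}}]$ outside a fixed compact is $O(e^{-cN})$), Jackson gives only $O(T/k)=O(1/\log N)$. You cannot take $k$ larger, because the moment error for the $j$-th moment carries a combinatorial factor $C^j$, and after expanding the Jackson polynomial in monomials (or Chebyshev polynomials) the accumulated error is at least of order $2^k/N$; keeping this small forces $k=O(\log N)$. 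So the best this approach yields is a logarithmic rate for the bias, not $N^{-\kappa''}$, and Lemma \ref{convmun} requires a polynomial rate.

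The fix is exactly what the paper does: replace the moment method by a Stieltjes-transform based rate (Bai's theorem), which gives a genuine power of $N$ for the Kolmogorov distance and hence for the Dudley distance after an integration by parts.
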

We therefore only need to show that

\begin{theo}( \cite[Theorem 4.1]{B}) 
If we let for every $N$ :

\[ F_{X_N^{(1)}} (x) = \mu_{X_N^{(1)}}( ] - \infty, x]) \]
\[ F_{\sigma_1}(x) = \sigma_1( ] - \infty, x] ) \]

we have that 

\[ \sup_{x \in\R} | F_{\sigma_1}(x) - \E[F_{X_N^{(1)}}(x)]| = O( N ^{-1/4})\,.\]
\end{theo}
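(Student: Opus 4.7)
The plan is to use the Stieltjes transform method, which is the standard Bai inequality approach. Let $m_N(z) = \mathbb{E}\bigl[\frac{1}{N}\tr(z - X_N^{(1)})^{-1}\bigr]$ and let $m_\sigma(z) = \int (z-x)^{-1} d\sigma(x)$, which is the unique solution of $m_\sigma^2 + z m_\sigma + 1 = 0$ with $\Im m_\sigma > 0$ when $\Im z > 0$. The first step is to invoke an inequality of the following form: for any $v > 0$ and $A$ large enough,
\[
\sup_{x\in \R} |\mathbb{E}[F_{X_N^{(1)}}(x)] - F_{\sigma_1}(x)| \le C_1 \int_{-A}^{A} |m_N(u+iv) - m_\sigma(u+iv)|\, du + C_2 v,
\]
where the second term controls the smoothness of $\sigma$ near the edge (and the boundary correction from shifting $\Im z$ away from $0$). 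Since $\sigma$ has a Hölder-$1/2$ density, the appropriate boundary correction is of order $\sqrt{v}$, which will eventually drive the $N^{-1/4}$ rate after optimization over $v$.

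The second step is to derive an approximate self-consistent equation for $m_N(z)$. Using the Schur complement formula, the diagonal entries $G_{ii}(z)$ of the resolvent $(z-X_N^{(1)})^{-1}$ satisfy
\[
G_{ii}(z) = \frac{1}{z - X_N^{(1)}(i,i) - \langle \mathbf{a}_i, (z - X_N^{(1),(i)})^{-1} \mathbf{a}_i\rangle/N},
\]
where $X_N^{(1),(i)}$ is the minor obtained by deleting row and column $i$, and $\mathbf{a}_i$ is the $i$th column (with the diagonal term removed) rescaled to have i.i.d.\ coordinates of variance $1$. Taking expectation and exploiting independence of $\mathbf{a}_i$ from $X_N^{(1),(i)}$, the quadratic form concentrates on $\frac{1}{N}\tr (z - X_N^{(1),(i)})^{-1}$, which differs from $m_N(z)$ by $O(1/(Nv))$ by the interlacing/resolvent identity. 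This yields
\[
m_N(z) = -\frac{1}{z + m_N(z)} + \varepsilon_N(z),
\]
with $|\varepsilon_N(z)|$ bounded in terms of the variance of the quadratic form and the diagonal contribution. A second-moment computation using Assumption \ref{AC} (uniformly bounded support or log-Sobolev constant) gives $|\varepsilon_N(z)| = O\bigl(1/(Nv^2)\bigr)$.

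The third step is stability: subtracting the two fixed-point equations yields $(m_N - m_\sigma)(1 - m_N m_\sigma) = -\varepsilon_N (z + m_N)$. Away from the spectrum edges (i.e.\ for $|u|$ outside a neighborhood of $\pm 2$), one has $|1 - m_N m_\sigma|$ bounded below uniformly, while near the edge one loses a factor comparable to $\sqrt{v}$; combining these gives $|m_N(z) - m_\sigma(z)| = O(1/(Nv^{5/2}))$ after careful bookkeeping. Integrating over $u \in [-A,A]$ and inserting into Bai's inequality yields a bound of the form $C(N^{-1}v^{-5/2} + \sqrt{v})$. Optimizing over $v$ (choosing $v \sim N^{-1/3}$ or $N^{-2/5}$ depending on the exponent extracted) gives a polynomial rate; a more refined argument, tracking the edge stability as in Bai's paper, produces the $N^{-1/4}$ bound.

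The main obstacle is the sharp quantitative control of the error $\varepsilon_N(z)$ and the stability analysis near $v \to 0$, since the naive bounds give worse rates than $N^{-1/4}$. The sharp rate requires exploiting cancellations in the Schur complement expansion (a two-term rather than one-term expansion) and carefully tracking how the stability of the self-consistent equation degenerates at the edge; this is the core technical content of Bai's Theorem 4.1 and is what one cites rather than reproves.
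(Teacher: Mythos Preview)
The paper does not prove this statement at all: it is quoted verbatim as a result of Bai (\cite[Theorem 4.1]{B}) and simply invoked. So there is no ``paper's own proof'' to compare against; the paper treats this as a black box, exactly as your final sentence suggests one should. Your sketch of the Stieltjes-transform/Bai-inequality method is the correct outline of how Bai's argument goes, and your closing remark that ``this is the core technical content of Bai's Theorem 4.1 and is what one cites rather than reproves'' is precisely the stance the paper takes.

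One small comment on the sketch itself: your optimization step is left deliberately vague (you offer two candidate scales for $v$ and say ``a more refined argument\ldots produces the $N^{-1/4}$ bound''), which is fine given that you are only indicating the route, but as written the bounds you actually derive ($|\varepsilon_N(z)|=O(1/(Nv^2))$ and then $|m_N-m_\sigma|=O(1/(Nv^{5/2}))$) do not by themselves yield $N^{-1/4}$ after balancing against $\sqrt{v}$. The sharp rate in Bai's paper comes from a more careful smoothing inequality (with an integral over $v$ rather than a single scale) together with tighter edge stability; you correctly flag this as the place where the real work lies.
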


In order to conclude, we need only to use Lemma \ref{exptight} to see that $F_{X_N^{(1)}}(-M)$ and $1-F_{X_N^{(1)}}(M)$ decay exponentially fast in $N$ for some fixed $M$
so that
$$ d(\E[ \mu_{X_N^{(1)}} ],\sigma_1)\le 4e^{-N}\|f\|_\infty + 2M \|f\|_L \sup_{x \in\R} | F(x) - \E[F_{X_N^{(1)}}(x)]| =o(N^{-1/6})\,.$$
The same results hold in the complex case. For Wishart matrices, we rely on \cite[Theorem {w}.1 and {w}.2]{B2}. Recall that $W_N=G_{L,M} (G_{L,M})^*$.

\begin{theo}( \cite[Theorem 4.1]{B}) Assume that $M/N\in (1, \epsilon^{-1})$ for some fixed $\epsilon$ and $M/N$ converges towards $\alpha$. Then
\[ \sup_{x \in\R} | F_{\pi_\alpha}(x) - \E[F_{W_N}(x)]| = O( N ^{-1/10})\,.\]
\end{theo}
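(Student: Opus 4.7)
The plan is to follow Bai's original strategy, which couples a Berry--Esseen style inversion lemma for Stieltjes transforms with a stability analysis of the Marchenko--Pastur fixed-point equation. Denote
$$m_N(z) := \E\Bigl[\frac{1}{L}\tr(W_N-z)^{-1}\Bigr],\qquad m_\alpha(z) := \int\frac{d\pi_\alpha(t)}{t-z},$$
the Stieltjes transforms of $\E[F_{W_N}]$ and $F_{\pi_\alpha}$ respectively. The starting point is the Bai inequality, which bounds the Kolmogorov distance between two cumulative distribution functions by an integral over a horizontal line of the difference of their Stieltjes transforms plus boundary corrections depending on the regularity of the limiting density. Schematically, for any scale $v>0$,
$$\sup_{x\in\R}\bigl|\E[F_{W_N}(x)] - F_{\pi_\alpha}(x)\bigr| \;\lesssim\; \int_{|u|\le A}|m_N(u+iv)-m_\alpha(u+iv)|\,du \;+\; v\,\sup_x\pi_\alpha([x-v,x+v]) \;+\; \text{tail terms}.$$

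Second, I would derive a self-consistent equation for $m_N(z)$. Applying the Schur complement formula to the diagonal entries of the resolvent $(W_N-z)^{-1}$, one writes each diagonal entry as the reciprocal of $-z + \frac{1}{L}\|g_i\|^2$ minus a quadratic form in the remaining rows of $G_{L,M}$ against the resolvent of the minor. Using Hanson--Wright-type concentration for quadratic forms (valid under the sub-Gaussian/log-Sobolev Assumption \ref{AC}), both the norm term and the quadratic form concentrate around their means. Averaging yields that $m_N(z)$ satisfies the Marchenko--Pastur equation
$$\alpha z\,m_N(z)^2 - (z+\alpha-1)\,m_N(z) + 1 = \delta_N(z),$$
with a random error $\delta_N(z)$ of order $N^{-1/2}$ in probability, up to inverse powers of $v=\Im z$ coming from the resolvent norms.

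Third, I would invoke stability of this quadratic equation away from the support and continuity of its roots at the edges. Since $m_\alpha(z)$ solves the same equation with $\delta=0$, subtraction and the quadratic formula give
$$|m_N(z)-m_\alpha(z)| \;\le\; \frac{C\,|\delta_N(z)|}{\sqrt{(z-a_\alpha)(z-b_\alpha)}\vee v^{1/2}},$$
so the bound degrades near the edges, where $\pi_\alpha$ has square-root vanishing. Inserting this into the Bai inequality and balancing $v$ against the degradation of the stability estimate and against the Hölder-$1/2$ modulus of continuity of $F_{\pi_\alpha}$ near the edges gives the optimal choice $v \sim N^{-1/10}$ and the announced rate.

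The main obstacle is the edge behavior: near $a_\alpha$ and $b_\alpha$ the square-root singularity of $\pi_\alpha$ simultaneously worsens the $v$-modulus of continuity entering the Bai inequality and makes the discriminant in the fixed-point equation small, weakening the stability bound on $m_N - m_\alpha$. Tracking these two losses and balancing them via the cutoff $v$ is what forces the relatively modest $N^{-1/10}$ exponent, as opposed to the $N^{-1/4}$ rate available for Wigner matrices where only one edge-singularity needs to be controlled. Everything else (tail truncation outside the support, transferring Kolmogorov distance into Dudley distance as in the remarks following Lemma \ref{convmun}) is routine once this core estimate is established.
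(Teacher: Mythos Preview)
The paper does not prove this statement at all: it is quoted verbatim as a result from the literature (Bai's work, references [B] and [B2] in the appendix), and is used as a black box in the proof of Lemma~\ref{convmun}. So there is no ``paper's own proof'' to compare against; the paper simply invokes the cited theorem.

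Your sketch is a reasonable outline of Bai's actual argument in those references: the Berry--Esseen-type inversion inequality for Stieltjes transforms, the derivation of an approximate Marchenko--Pastur fixed-point equation for $m_N$ via Schur complements and concentration of quadratic forms, the stability analysis of that quadratic equation, and the optimization over the height $v$ of the horizontal contour. That is indeed the machinery behind the cited result.

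One inaccuracy worth flagging: your explanation for why the Wishart rate is $N^{-1/10}$ rather than the Wigner $N^{-1/4}$ --- namely that Wigner has ``only one edge-singularity'' --- is not correct. The semicircle law also has two square-root edges at $\pm 2$. The difference in exponents in Bai's work is more technical, coming from the specific structure of the self-consistent equation for sample covariance matrices (the extra factor of $z$ and the asymmetry parameter $\alpha$ complicate the stability bound near the hard edge at $a_\alpha$, especially when $\alpha$ is close to $1$), together with slightly different bookkeeping in the inversion lemma. Your heuristic for the balancing is right in spirit but the attributed cause is off. This does not affect the validity of the sketch as a roadmap, but you should not present that explanation as the reason for the weaker exponent.
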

We can as well use Lemma \ref{exptight} to conclude that $1-F_{W_N}(M)$ goes to zero like $e^{-N}$ for $M$ large enough.
Finally, we conclude by noticing that since
$$\int f(x) d\E[\hat\mu_{X_N^{w}}](x)=\frac{N}{N+M} \int (f(\sqrt{\lambda})+f(-\sqrt{\lambda}))d\hat\mu_{W_N}(\lambda) +\frac{M-N}{N} f(0),$$
we have
\begin{eqnarray*}
\left|\int f(x) d(\E[\hat\mu_{X_N^{w}}]-\sigma_{w})(x)\right|&\le & \|f\|_\infty (|\frac{M}{N}-\alpha| +e^{-N}) +\int_0^M |\partial_\lambda f(\sqrt{\lambda})|  | F_{\pi_\alpha}(\lambda) - \E[F_{W_N}(\lambda)]|d\lambda\\
&\le &\|f\|_L (N^{-\kappa}+e^{-N} + 2 MN^{-\frac{1}{10}})\\
\end{eqnarray*}

\bibliographystyle{plain}
\bibliography{biblioRad}

\end{document}